\definecolor{vegasgold}{rgb}{0.77, 0.7, 0.35}
\definecolor{darkgoldenrod}{rgb}{0.72, 0.53, 0.04}
\definecolor{gold(metallic)}{rgb}{0.83, 0.69, 0.22}
\DeclareFontFamily{U}{wncy}{}
\DeclareFontShape{U}{wncy}{m}{n}{<->wncyr10}{}
\DeclareSymbolFont{mcy}{U}{wncy}{m}{n}
\DeclareMathSymbol{\Sh}{\mathord}{mcy}{"58}
\newtheorem{theorem}{Theorem}[section]
\newtheorem{lemma}[theorem]{Lemma}
\newtheorem{conjecture}[theorem]{Conjecture}
\newtheorem{proposition}[theorem]{Proposition}
\newtheorem{corollary}[theorem]{Corollary}
\newtheorem{definition}[theorem]{Definition}
\numberwithin{equation}{section}
\newcommand{\mufn}[1]{\mu^{\op{fn}}(#1)}
\theoremstyle{remark}
\newtheorem{remark}[theorem]{Remark}
\newtheorem{example}[theorem]{Example}
\newcommand{\Rfine}[2]{\mathcal{R}_{p^\infty}(#1/#2)}
\newcommand{\Gal}{\operatorname{Gal}}
\newcommand{\Zp}{\mathbb{Z}_p}
\newcommand{\FF}{\mathbb{F}}
\newcommand{\GL}{\mathrm{GL}}
\newcommand{\Z}{\mathbb{Z}}
\newcommand{\Q}{\mathbb{Q}}
\newcommand{\F}{\mathbb{F}}
\newcommand{\cO}{\mathcal{O}}
\newcommand{\Tst}{\mathbf{T}_{\rho^*}}
\newcommand{\cyc}{\mathrm{cyc}}
\newcommand{\op}[1]{\operatorname{#1}}
\newcommand\mtx[4] { \left( {\begin{array}{cc}
 #1 & #2 \\
 #3 & #4 \\
 \end{array} } \right)}
\begin{document}
\title[$\mu=0$ conjecture for Fine Selmer groups]{On the {\Large{$\mu$}} equals zero Conjecture for Fine Selmer groups in Iwasawa theory}

\author[Deo]{Shaunak V.~Deo}
\address[Deo]{Department of Mathematics, Indian Institute of Science, Bangalore 560012, India}
\email{shaunakdeo@iisc.ac.in}

\author[Ray]{Anwesh Ray}
\address[Ray]{Centre de recherches mathématiques,
Université de Montréal,
Pavillon André-Aisenstadt,
2920 Chemin de la tour,
Montréal (Québec) H3T 1J4, Canada}
\email{anwesh.ray@umontreal.ca}

\author[Sujatha]{R.~Sujatha}
\address[Sujatha]{Department of Mathematics\\
University of British Columbia\\
Vancouver BC, Canada V6T 1Z2}
\email{sujatha@math.ubc.ca}

\begin{abstract}
We study the Iwasawa theory of the fine Selmer group associated to Galois representations arising from modular forms. The vanishing of the $\mu$-invariant is shown to follow in some cases from a natural property satisfied by Galois deformation rings. We outline conditions under which the $\mu=0$ conjecture is shown to hold for various Galois representations of interest.
\end{abstract}

\subjclass[2010]{11R23 (primary); 11F80, 11G05, 11F11 (secondary)}
\keywords{Iwasawa $\mu$-invariant, fine Selmer groups, adjoint representations, deformations of Galois representations.}

\maketitle
 
\section{Introduction}
\label{section:intro}
\par Let $F$ be a number field and $p$ be an odd prime. The $p$-primary roots of unity, considered
as a module over the absolute Galois group of $F$, is denoted by $\mu_{p^{\infty}}.$ Set $F_n$ to be the unique field
contained in $F(\mu_{p^{\infty}})$ such that $[F_n:F]=p^n$. 
The cyclotomic $\Z_p$-extension $F_{cyc}$ of $F$ is the subextension of $F(\mu_{p^{\infty}})$ obtained
as the union of the fields $F_n.$
In his seminal work \cite{iwasawa1973zl}, Iwasawa studied the growth of the $p$-part of the class groups over certain towers of number fields. In particular, for the cyclotomic $\Zp$-extension of $F$, Iwasawa proved that there are invariants $\mu, \lambda\in \Z_{\geq 0}$ and $\nu\in \Z$ such that \[\#\text{Cl}_p(F_n)=p^{p^n \mu + \lambda n + \nu} \text{ for } n \gg 0,\]
where $\text{Cl}_p(F_n)$ is the $p$-Sylow subgroup of the class group of $F_n$.
The invariants $\mu$ and $\lambda$ are the Iwasawa invariants associated to the $\Z_p$-extension $F_{\op{cyc}}/F$.

\par The genesis of Iwasawa theory arose from the study
of these invariants for a broad class of $\Zp$-extensions of a number field. 
Iwasawa formulated various conjectures about these invariants.
Amongst them is a famous conjecture,
henceforth referred to as {\it Iwasawa's $\mu=0$ conjecture}, which asserts that the $\mu$-invariant of the cyclotomic $\Zp$-extension $F_{\op{\cyc}}/F$ of any number field $F$ is zero.
%A famous conjecture of Iwasawa asserts that the $\mu$-invariant of any number field $F$ is equal to $0$. 
%Let $X(F_{\op{\cyc}})$ be the Galois group of the maximal abelian, unramified, pro-$p$ extension of $F_{\op{\cyc}}$ over $F_{\op{\cyc}}$ and let $\Gamma =\op{Gal}(F_{\op{\cyc}}/F)$.
Let $\mathcal{F}$ be the maximal abelian, unramified, pro-$p$ extension of $F_{\op{\cyc}}$, let $X(F_{\op{\cyc}}) = \op{Gal}(\mathcal{F}/F_{\op{\cyc}}) $ and let $\Gamma =\op{Gal}(F_{\op{\cyc}}/F)$.
Then an equivalent formulation of this conjecture is the statement that $X(F_{\op{\cyc}})$ is a finitely generated $\Z_p$-module.
%a torsion $\Z_p\llbracket \Gamma\rrbracket$-module and is also a finitely generated $\Z_p$-module.
When $F$ is an abelian number field, the $\mu$-invariant is known to vanish by the work of Ferrero and Washington \cite{ferrero1979iwasawa}. We remark that no general result in this direction is known for any broader class
of non-abelian number fields.
%However, not much is known for nonabelian number field extensions $F/\Q$.

\par In the modern language of Galois representations and arithmetic geometry, Iwasawa's $\mu=0$ conjecture can be interpreted in terms of structural invariants associated to certain modules over the Iwasawa algebra of $\Gal(F_{\op{cyc}}/F)$.
 These modules arise naturally in the context of the Galois representation associated to the Tate motive $\mu_{p^{\infty}}.$ Subsequently, Iwasawa theory evolved to cover the study of a range of modules over Iwasawa algebras that arise from other Galois representations. An important instance of this evolution is the work of Mazur \cite{mazur1972rational} who initiated the Iwasawa theory of elliptic curves and abelian varieties. In this pathbreaking work, Mazur studied the $p$-primary Selmer group of an elliptic curve (or abelian variety) $E_{/F}$ which has good, ordinary reduction at all primes dividing $p$ over $F_{\op{cyc}}$, and defined analogous Iwasawa $\mu$ and $\lambda$-invariants in this context.
The $\mu$-invariant of the $p$-primary Selmer group need not vanish. For example, for $E_{/\Q}=X_0(11)$, the $\mu$-invariant of the $5$-primary Selmer group is positive. Given the analogy between class groups and Selmer groups of elliptic curves, it is natural to frame an analogue of Iwasawa's $\mu=0$ conjecture in this setting which we describe below.
%Thus it is natural to ask what is the analogue of Iwasawa's $\mu=0$ conjecture in this setting.
\par Let $E$ be an elliptic curve defined over a number field $F$ and $p$ be a prime above which $E$ has potential good reduction. Set $\Gamma:=\op{Gal}(F_{\op{cyc}}/F)$ and let $\Lambda$ denote the corresponding Iwasawa algebra, which is defined to be the completed group algebra $\Lambda:=\varprojlim_n \Z_p[\Gamma/\Gamma^{p^n}]$. Then, the $p$-primary \emph{fine Selmer group} consists of the Galois cohomology classes in $H^1\left(\op{Gal}(\bar{F}/F_{\op{cyc}}), E[p^\infty]\right)$ that are trivial at all primes above $p$ and the primes at which $E$ has bad reduction. 
The fine Selmer group is believed to be cotorsion as an Iwasawa module and this is known to be true for elliptic curves over $\mathbb{Q}$ (see \cite{coates2005fine}).
%Kato \cite{kato2004p} showed that when $F$ is an abelian extension of $\Q$, the fine Selmer group is cotorsion over $\Lambda$. 
In contrast, the usual Selmer group need not satisfy this condition unless the elliptic curve is assumed to have ordinary reduction at all primes above $p$. 

\par The properties of the fine Selmer group were systematically studied by Coates and the third named author who showed that the Iwasawa $\mu$-invariant of the fine Selmer group is related to the classical Iwasawa $\mu$-invariant of the cyclotomic $\Zp$-extension of the number field $F(E[p])$ cut out by the $p$-torsion in $E$. We refer to \cite[Corollary 3.5]{coates2005fine} for the precise statement of this result. This relationship shows that the vanishing of the $\mu$-invariant of the fine Selmer group is a consequence of Iwasawa's classical $\mu=0$ conjecture. This led Coates and the third named author to formulate the $\mu=0$ conjecture for the fine Selmer group of an elliptic curve (see \cite[Conjecture A]{coates2005fine}), as the analogue of Iwasawa's $\mu=0$ conjecture for Galois representations associated to elliptic curves.

 \par Iwasawa theory of  $p$-adic Galois representations is currently a central theme in number theory.
The fine Selmer group can be defined in the more general context of $p$-adic 
 Galois representations (see Definition~\ref{fineselmdef} and Definition~\ref{fineselmerdef}).
We remark that the $\mu=0$ conjecture can be formulated in this 
 broader framework (see Conjecture \ref{generalized mu equals 0 conjecture} and \cite[Conjecture A, p.74]{lim2018fine}). It has interesting applications to the study of Selmer groups associated to Galois representations of interest.
%More generally, the fine Selmer group can also be defined for any $p$-adic Galois representation in general (see Definition~\ref{fineselmdef} and Definition~\ref{fineselmerdef}) and hence, the $\mu=0$ conjecture may be formulated in this scenario. 
%The same arguments as in \cite{coates2005fine} show that the vanishing of the $\mu$-invariant in this setting is also a consequence of the classical $\mu=0$ conjecture of Iwasawa.
The goal of this article is to study the $\mu=0$ conjecture in this broader setting. The main results formulate explicit conditions under which the $\mu$-invariant vanishes for the fine Selmer groups associated to important classes of $p$-adic Galois representations. The cases of interest are the Galois representations arising from elliptic curves defined over number fields, Hecke eigencuspforms of weight at least $2$, Tate twists of Artin representations and Tate twists of the adjoint representations associated to modular Galois representations.

\par In the study of Galois representations, the theme of $p$-adic variation plays a key role. More precisely, the deformation theory of Galois representations, introduced and developed by Mazur in \cite{mazur1989deforming}, allows us to study $p$-adic families of lifts of a fixed mod-$p$ Galois representation. Hida \cite{hida2000adjoint} studied the relationship between certain modular deformation rings defined over the cyclotomic $\Z_p$-extension of $\Q$ and the Iwasawa theory of the classical Selmer group of the adjoint representation of a modular form. 
In this article, the $\mu=0$ conjecture for the fine Selmer group of the first Tate twist of the adjoint representation of a $p$-adic Galois representation $\rho$ is related to
\emph{unobstructedness} of the residual representation of $\rho$ (see Definition~\ref{def of unobstructed}).
Note that the notion of unobstructedness appears naturally in Galois deformation theory in connection with the structure of the universal deformation ring (in the sense of Mazur).
%certain natural objects studied in Galois deformation theory. {\color{red}To be precise, we relate it with the \emph{unobstructedness} the universal deformation ring (in the sense of Mazur) of the residual modular Galois representation. }
This provides a new approach in investigating the fine Selmer group. 
%We stress that the results in this article are independent of those in the above mentioned work. 
It is our belief that the linkage established here will help advance the current state of art in both the Iwasawa theory of the fine Selmer group as well as deformation theory of Galois representations. In particular, our methods provide a large class of new examples of fine Selmer groups of Galois representations for which the $\mu$-invariant vanishes.

\par Let us now briefly outline the results proved in this article. Section \ref{prelimsection} is preliminary in nature. In section \ref{criteriasection}, we establish a criterion for the vanishing of the $\mu$-invariant of the fine Selmer group of a Galois representation (see Theorem \ref{theorem 3.6}). 
To be precise, we prove that the $\mu$-invariant of the fine Selmer group of a $p$-adic Galois representation $\rho$ vanishes if the \emph{second} global Galois cohomology group of the first Tate twist of the dual of the residual representation of $\rho$ vanishes.
Thus, this criterion is determined purely in terms of the residual representation. Theorem \ref{th h2 vanishing} gives explicit conditions for the vanishing of the $\mu$-invariant of the fine Selmer group, which we specialize to the case of elliptic curves in Theorem \ref{elliptic curve criterion section 3}.

\par Section \ref{adjointsection} establishes the linkage between deformation theory of Galois representations and the vanishing of the $\mu$-invariant of the fine Selmer group. We study the fine Selmer group of the first Tate twist of the $n^2$-dimensional adjoint Galois representation $\op{Ad}\rho$ associated to an $n$-dimensional Galois representation $\rho$. An important class of such Galois representations arises naturally from automorphic forms. More precisely, Theorem \ref{unobstructed implies mu is zero} shows that the $\mu$-invariant vanishes for the fine Selmer group of $\op{Ad}\rho(1)$ if $\rho$ is \emph{unobstructed}. This concept was initially introduced by Mazur in \cite{mazur1989deforming}, and it was shown by Weston in \cite{weston2004unobstructed} that Galois representations arising from modular forms are unobstructed under some explicit conditions.

\par Let $f$ be a normalized Hecke newform of weight $k\geq 2$. Given a prime $\mathfrak{p}$ in the field of Fourier coefficients of $f$, let $\rho_{f,\mathfrak{p}}$ be the associated $2$-dimensional $\mathfrak{p}$-adic Galois representation. Theorem \ref{main theorem on adjoints} shows that the $\mu$-invariant of the fine Selmer group of $\op{Ad}\rho_{f, \mathfrak{p}}(1)$ vanishes for a set of primes $\mathfrak{p}$ of Dirichlet density $1$. 
Moreover, if $k>2$, then it is shown to vanish for all but finitely many primes $\mathfrak{p}$. Furthermore, if the level of $f$ is squarefree, then there is an explicit set of primes $\mathfrak{p}$ outside of which the associated $\mu$-invariant is $0$.
When $k=2$ and $f$ has trivial nebentypus, our results imply that the $\mu$-invariant of the fine Selmer group of $\op{Sym}^2 \rho_{f,\mathfrak{p}}$ vanishes for a set of primes $\mathfrak{p}$ of Dirichlet density $1$.
We also prove a similar result in the setting of Hilbert modular forms (see Theorem~\ref{hilbert-thm}).
On the other hand, we prove that if $E$ is a rational elliptic curve with squarefree conductor, then there are infinitely many primes $p$ such that the $\mu$-invariant of the fine Selmer group of $\op{Ad}^0\rho_{E,p}$ vanishes (see Theorem~\ref{ell thm}).
Here, $\rho_{E,p}$ is the $2$-dimensional $p$-adic Galois representation attached to $E$ and $\op{Ad}^0\rho_{E,p}$ is the subrepresentation of $\op{Ad}\rho_{E,p}$ consisting of matrices with trace $0$.

 We then turn our attention to Artin representations. Note that we can realize an Artin representation as a $p$-adic Galois representation for all primes $p$.
We prove that if $\rho$ is an Artin representation such that $H^0(\op{Gal}(\mathbb{C}/\mathbb{R}),\rho)=0$, then the $\mu$-invariants of the fine Selmer groups of the first Tate twists of the $p$-adic realizations of the dual of $\rho$ vanish for all but finitely many primes $p$ (see Theorem~\ref{artinthm}).

\par We also connect the $\mu=0$ conjecture with the notion of \emph{neatness} introduced by Mazur in \cite{mazur1989deforming} to prove that the $\mu=0$ conjecture is true for first Tate twist of the adjoint representation of a $p$-adic Galois representation $\rho$ whose residual representation is \emph{neat} (see Corollary \ref{corollary neat}). Leveraging a construction of Mazur and B\"{o}ckle, we end section \ref{adjointsection} with explicit families of odd and even $S_3$-representations such that the $\mu=0$-conjecture holds for the fine Selmer groups of first Tate twists of all their characteristic $0$ lifts. 

\par In section \ref{s 5}, we study the vanishing of the $\mu$-invariant of the fine Selmer group associated to $2$-dimensional residually dihedral Galois representations. The conditions are stated purely in terms of the residual representation and the set of primes at which the characteristic zero representation ramifies. In Theorem \ref{th 5.5} we give explicit conditions for the vanishing of the $\mu$-invariant and obtain a large class of examples for which our results apply. Finally, we remark that our results are illustrated with concrete numerical examples.

\subsection*{Acknowledgements} The first named author would like to thank Gabor Wiese for helping with computation of examples presented in \S\ref{s 5}.
The second named author is grateful to Ravi Ramakrishna for introducing him to the fascinating subject of Galois deformation theory, and would also like to thank Tom Weston and Jeffrey Hatley for helpful discussions along the way. His research is supported by the CRM-Simons bridge postdoctoral fellowship. The third named author gratefully acknowledges support from NSERC Discovery grant 2019-03987. The first and third named authors would also like to thank the online program on `Elliptic curves and the special values of $L$-functions' (code: ICTS/ECL2021/8) held at ICTS Bangalore, for providing the motivation and opportunity to begin discussions relevant to the work presented in this article.
 We would also like to thank the anonymous referee for providing many comments and suggestions which helped in improving the exposition.

\section{Preliminaries}
\label{prelimsection}
\par Throughout, $p$ is an odd prime number and $F$ is a number field. Set $S_p$ to be the set of primes of $F$ above $p$. Let $S$ be a finite set of primes of $F$ and assume that $S$ contains $S_p$ and all archimedean primes of $F$. Throughout, fix an algebraic closure $\bar{F}$ of $F$. Let $F_S$ be the maximal extension of $F$ contained in $\bar{F}$ in which all primes $v\notin S$ are unramified. Set $\op{G}_{F,S}$ to be the Galois group $\op{Gal}(F_S/F)$, identified with the maximal quotient of $\op{Gal}(\bar{F}/F)$ in which the primes $v\notin S$ are unramified. Given a field $L\subset F_S$, we set \[H^i\left(F_S/L, \cdot \right):=H^i\left(\op{Gal}(F_S/L), \cdot\right).\] For a number field $L$ contained in $F_S$, set
\[K^i_v(\cdot/L):=\bigoplus_{w|v} H^i(L_w, \cdot),\] where $w$ runs through the primes of $L$ above $v$. The fine Selmer group associated with a Galois module is obtained by imposing the \emph{strict local condition} at each prime that lies above the primes in $S$.
\begin{definition}
\label{fineselmdef}
Let $M$ be a discrete $p$-primary $\op{G}_{F,S}$ module and $L$ be a number field contained in $F_S$. The fine Selmer group over $L$ associated to $M$ is defined as follows
\[\Rfine{M}{L}:=\op{ker}\left\{H^1(F_S/L, M)\longrightarrow \bigoplus_{v\in S} K^1_v(M/L)\right\}.\]
Given an infinite extension $\mathcal{L}\subset F_S$, set 
\[\Rfine{M}{\mathcal{L}}:=\varinjlim_{L} \Rfine{M}{L},\] where $L$ runs over all number fields contained in $\mathcal{L}$.
\end{definition}
Note that, in general, the definition of $\Rfine{M}{\mathcal{L}}$ depends on the choice of $S$.
However, if $\mathcal{L}$ contains the cyclotomic $\Z_p$-extension of $F$, then $\Rfine{M}{\mathcal{L}}$ does not depend on the choice of $S$ (see \cite[Lemma 3.2]{SujathaWitte}).
\par For every integer $n\geq 1$, let $\mu_{p^n}$ be the group of $p^n$-th roots of unity contained in $\bar{F}$ and $\mu_{p^\infty}:=\cup_{n\geq 1} \mu_{p^n}$. Let $F(\mu_{p^n})$ (resp. $F(\mu_{p^\infty})$) be the field extension of $F$ generated by $\mu_{p^n}$ (resp. $\mu_{p^\infty}$). The cyclotomic $\Z_p$-extension of $F$ is the unique $\Z_p$-extension of $F$ contained in $F(\mu_{p^\infty})$, and is denoted by $F_{\op{cyc}}$.
Let $K$ be a finite extension of $\Q_p$ and $\cO$ be its valuation ring. Given an integral Galois representation, $\rho:\op{G}_{F,S}\rightarrow \op{GL}_n(\cO)$, let $\mathbf{T}_{\rho}$ be the underlying $\cO$-lattice on which $\op{G}_{F,S}$ acts by $\rho$ and set $\mathbf{A}(\rho):=\mathbf{T}_{\rho}\otimes_{\cO} \left(K/\cO\right)$. 
\begin{definition}
\label{fineselmerdef}
With respect to notation above, the fine Selmer group over $F_{\op{cyc}}$ associated with $\rho$ is $\Rfine{\mathbf{A}(\rho)}{F_{\op{cyc}}}$. 
\end{definition}
Thus \cite[Lemma 3.2]{SujathaWitte} implies that the definition of $\Rfine{\mathbf{A}(\rho)}{F_{\op{cyc}}}$ does not depend on the choice of $S$.
\par Set $\Gamma:=\op{Gal}(F_{\op{cyc}}/F)$ and let \[\Lambda:=\cO\llbracket \Gamma \rrbracket =\varprojlim_n \cO[\Gamma/\Gamma^{p^n}]\] be the associated Iwasawa algebra. Given a finitely generated $\Lambda$-module $\mathfrak{M}$, there is a map of $\Lambda$-modules\[
\mathfrak{M}\longrightarrow \Lambda^\alpha\oplus \left(\bigoplus_{i=1}^s \Lambda/(p^{\mu_i})\right)\oplus \left(\bigoplus_{j=1}^t \Lambda/(f_j(T)) \right)
\]
with finite kernel and cokernel. Here, $\alpha=\op{rank}_{\Lambda} \mathfrak{M}$ and each power series $f_j(T)$ is a \emph{distinguished polynomial}. In other words, $f_j(T)\in \Lambda$ is a monic polynomial all of whose non-leading coefficients are non-units in $\cO$. The $\mu$-invariant is the sum $\sum_i \mu_i$. Given an integral Galois representation
\[\rho:G_{F,S}\rightarrow \op{GL}_n(\cO),\]set $\mu^{\op{fn}}(\rho)=\mu^{\op{fn}}(\rho/F_{\op{cyc}})$ to be the $\mu$-invariant of $Y(\mathbf{A}(\rho)/F_{\op{cyc}})$, the Pontryagin dual of the fine Selmer group $\Rfine{\mathbf{A}(\rho)}{F_{\op{cyc}}}$. The following is Conjecture A in \cite{coates2005fine}.

\begin{conjecture}[Coates-Sujatha]\label{csconjecture}
Let $F$ be a number field, $E$ be an elliptic curve over $F$, 
%$p$ be an odd prime, 
and $S$ be a finite set of primes of $F$ containing $S_p$, all archimedean primes and all primes at which $E$ has bad reduction.
If \[\rho:G_{F,S}\rightarrow \op{GL}_2(\Z_p)\] is the $p$-adic Galois representation arising from $E$, then $\mu^{\op{fn}}(\rho)=0$.
\end{conjecture}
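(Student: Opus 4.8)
The plan is to reduce Conjecture~\ref{csconjecture} to Iwasawa's classical $\mu=0$ conjecture for the number field $\mathbb{Q}(E[p])$, exactly as in \cite[Corollary~3.5]{coates2005fine}, and then to analyze the cases in which the latter is accessible. By the structure theory of finitely generated $\Lambda$-modules, $\mu^{\op{fn}}(\rho)=0$ follows once one shows that $Y(\mathbf{A}(\rho)/\mathbb{Q}_{\op{cyc}})/p$ is finite; dualizing and using the multiplication-by-$p$ sequence on $\mathbf{A}(\rho)=E[p^\infty]$ (whose $p$-torsion submodule is $E[p]$), this in turn follows once the mod-$p$ fine Selmer group $\Rfine{E[p]}{\mathbb{Q}_{\op{cyc}}}$ is finite. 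One sufficient condition is the hypothesis $H^2(\mathbb{Q}_S/\mathbb{Q}_{\op{cyc}},\barrho^\vee(1))=0$ of Theorem~\ref{theorem 3.6} --- here $\barrho^\vee(1)\cong E[p]$ by the Weil pairing --- but this is not known in general, so for the full conjecture one descends along the finite extension $L_{\op{cyc}}:=\mathbb{Q}(E[p])_{\op{cyc}}$ of $\mathbb{Q}_{\op{cyc}}$. The Hochschild--Serre error terms for $G:=\op{Gal}(L_{\op{cyc}}/\mathbb{Q}_{\op{cyc}})$ are finite (they vanish if $p\nmid\#G$, and are controlled via a $p$-Sylow subgroup otherwise), so $\Rfine{E[p]}{\mathbb{Q}_{\op{cyc}}}$ is finite if and only if $\Rfine{E[p]}{L_{\op{cyc}}}$ is finite. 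Over $L_{\op{cyc}}$ the module $E[p]$ is trivial, so $\Rfine{E[p]}{L_{\op{cyc}}}\cong\op{Hom}(\mathfrak{X},\mathbb{F}_p)^{\oplus 2}$, where $\mathfrak{X}$ is the Galois group of the maximal everywhere-unramified pro-$p$ extension of $L_{\op{cyc}}$ that splits completely at the primes above $S$; since $\mathfrak{X}$ differs from the unramified Iwasawa module $X(L_{\op{cyc}})$ by a finitely generated $\Lambda$-submodule, its finiteness mod $p$ amounts to the classical $\mu=0$ statement for $L=\mathbb{Q}(E[p])$.

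The main obstacle is precisely this last step. Iwasawa's classical $\mu=0$ conjecture is proved only for abelian number fields (Ferrero--Washington \cite{ferrero1979iwasawa}); but by Serre's open image theorem $\op{Gal}(\mathbb{Q}(E[p])/\mathbb{Q})\cong\op{GL}_2(\mathbb{F}_p)$ --- in particular non-solvable --- for all but finitely many $p$ when $E$ has no complex multiplication, and, as recalled in \S\ref{section:intro}, the classical conjecture is open for every non-abelian number field. So the reduction above does not by itself prove Conjecture~\ref{csconjecture}; it merely isolates the class-group input that is needed.

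What appears actually attainable splits into three regimes. First, if $\barrho_{E,p}$ is split reducible --- equivalently, $E$ carries two independent rational $p$-isogenies --- then $\mathbb{Q}(E[p])/\mathbb{Q}$ is abelian and Ferrero--Washington already gives $\mu^{\op{fn}}(\rho)=0$. Second, if $E$ has complex multiplication by an order in an imaginary quadratic field $K$ in which $p$ splits, then $\mathbb{Q}(E[p])/K$ is abelian and the known $\mu=0$ results of Gillard and Schneps for cyclotomic $\mathbb{Z}_p$-extensions of abelian extensions of imaginary quadratic fields apply. Third, for the remaining curves, with open mod-$p$ image, one falls back on the unconditional cohomological criteria of Theorems~\ref{theorem 3.6}, \ref{th h2 vanishing} and \ref{elliptic curve criterion section 3}, which cover the families for which the relevant global $H^2$ vanishes. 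I expect this third regime to be the genuine obstruction: Kato's Euler system already shows the fine Selmer group is $\Lambda$-cotorsion and bounds its characteristic ideal --- hence controls the $\lambda$-invariant --- but gives no information about $\mu$, so a complete proof of Conjecture~\ref{csconjecture} would require a genuinely new input on $p$-class groups of non-abelian number fields. Accordingly, the realistic goal here is to enlarge the set of pairs $(E,p)$ handled by the first two regimes and by the criteria of \S\ref{criteriasection}, rather than to settle the conjecture outright.
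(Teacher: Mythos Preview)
The statement you were asked to address is a \emph{conjecture}, not a theorem: the paper states it as Conjecture~\ref{csconjecture} (Coates--Sujatha) and does not prove it. There is therefore no proof in the paper to compare your proposal against. What the paper does do is (i) recall, in Theorem~\ref{coatessujathathm}, the Coates--Sujatha reduction of the conjecture to the classical Iwasawa $\mu=0$ conjecture for a suitable subfield of $F(E[p^\infty])$, and (ii) develop in \S\ref{criteriasection} the cohomological criteria (Theorems~\ref{theorem 3.6}, \ref{th h2 vanishing}, \ref{elliptic curve criterion section 3}) that verify the conjecture in particular cases. The conjecture itself remains open.

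Your write-up is, to your credit, honest about this: you carry out exactly the reduction of Theorem~\ref{coatessujathathm}, correctly identify the obstruction (the classical $\mu=0$ conjecture for the non-abelian field $\Q(E[p])$), and then catalogue the accessible regimes. That is a fair summary of the state of the art, but it is not a proof, and should not be packaged as one. Two smaller points: first, you silently specialize to $F=\Q$ throughout, whereas the conjecture is stated for an arbitrary number field $F$; second, your ``split reducible'' case is weaker than what the paper actually handles---Theorem~\ref{thm reducible mu equals 0} only requires $\bar\rho$ to be reducible (upper-triangular), not split, since the extension $F(E[p^\infty])/L$ is pro-$p$ once $L$ contains the fields cut out by the diagonal characters.
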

 The following result provides evidence for the above conjecture.
\begin{theorem}[Coates-Sujatha]
\label{coatessujathathm}
%Let $p$ be an odd prime,
Let $F$ be a number field, $S$ be a finite set of primes of $F$ containing $S_p$ and all archimedean primes and \[\rho:G_{F,S}\rightarrow \op{GL}_2(\Z_p)\] be the $p$-adic Galois representation associated to an elliptic curve $E_{/F}$. Suppose that there is a number field $L/F$ such that 
\begin{enumerate}
    \item $L\subset F(E[p^\infty])$,
    \item the extension $F(E[p^\infty])/L$ is pro-$p$.
\end{enumerate}
Then, $\mufn{\rho}$ is equal to $0$ if and only if the Iwasawa's $\mu=0$ conjecture holds for the cyclotomic $\Zp$-extension $L_{\op{cyc}}/L$.
\end{theorem}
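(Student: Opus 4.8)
The plan is to strip away the arithmetic of the elliptic curve layer by layer until only the classical unramified Iwasawa module over $L_{\op{cyc}}$ remains, and then to recognize the equivalence with Iwasawa's conjecture. The first step is a reduction modulo $p$. Writing $A=\AA(\rho)=E[p^\infty]$ (so $A[p]=E[p]$) and $Y=Y(A/F_{\op{cyc}})$, one has $\mu(Y)=0$ if and only if $Y/pY$ is finite, $Y$ being a finitely generated torsion $\Lambda$-module — torsion because $\rho$ arises from an elliptic curve over a number field. By Pontryagin duality this says $\Rfine{A}{F_{\op{cyc}}}[p]$ is finite, and the Kummer sequence $0\to E[p]\to A\xrightarrow{\ p\ }A\to 0$, together with the finiteness of $E(F_{\op{cyc}})[p^\infty]$ and of the mod-$p$ reductions of the local $p$-power torsion groups, converts this into finiteness of $\Rfine{E[p]}{F_{\op{cyc}}}$. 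So the theorem becomes: $\Rfine{E[p]}{F_{\op{cyc}}}$ is finite if and only if Iwasawa's $\mu=0$ conjecture holds for $L_{\op{cyc}}/L$.

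Next I would trivialize the coefficients over $L$ and identify the resulting object. Put $L_\infty:=L_{\op{cyc}}=L\cdot F_{\op{cyc}}$ (the cyclotomic $\Zp$-extension of $L$, since $L\supseteq F$) and $F_\infty:=F(E[p^\infty])$, which contains $F_{\op{cyc}}$ by the Weil pairing. Because $L\subseteq F_\infty$ and $F_\infty/L$ is pro-$p$, the $G_L$-action on $E[p]$ factors through the pro-$p$ group $\op{Gal}(F_\infty/L)$, so its image in $\op{GL}_2(\F_p)$ is a $p$-group; conjugating it into the unipotent subgroup exhibits $E[p]$, as a $G_L$-module, as a successive extension of copies of the trivial module $\F_p$. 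D\'evissage in fine Selmer groups over $L_\infty$ (the only error terms are images of $H^0$'s of finite modules, and the strict local conditions pass to the long exact sequence) then shows that $\Rfine{E[p]}{L_\infty}$ is finite if and only if $\Rfine{\F_p}{L_\infty}$ is finite. Finally, since the fine Selmer group over $L_\infty$ is independent of $S$, I would take $S=S_p\cup S_\infty$; then $\Rfine{\F_p}{L_\infty}^\vee$ is the mod-$p$ reduction of the Galois group over $L_\infty$ of its maximal abelian pro-$p$ extension unramified everywhere and completely split above $p$, which differs from the classical Iwasawa module $X(L_\infty)$ only by the submodule generated by the (procyclic, hence $\Zp$-finitely generated) decomposition groups of the finitely many primes of $L_\infty$ above $p$. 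Hence $\Rfine{\F_p}{L_\infty}$ is finite if and only if $X(L_\infty)/pX(L_\infty)$ is finite, i.e. if and only if $\mu(X(L_\infty))=0$ (Iwasawa's theorem giving that $X(L_\infty)$ is $\Lambda$-torsion), which is Iwasawa's conjecture for $L_\infty/L$.

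What remains is the comparison of $\Rfine{E[p]}{F_{\op{cyc}}}$ with $\Rfine{E[p]}{L_\infty}$, and here the two directions behave very differently. One direction is soft: restriction $\Rfine{E[p]}{F_{\op{cyc}}}\to\Rfine{E[p]}{L_\infty}$ has kernel inside the finite group $H^1(\op{Gal}(L_\infty/F_{\op{cyc}}),E[p])$, so finiteness of $\Rfine{E[p]}{L_\infty}$ propagates down to $\Rfine{E[p]}{F_{\op{cyc}}}$; combined with the chain above, this already proves ``Iwasawa's conjecture for $L_\infty/L$ $\Rightarrow\mufn{\rho}=0$''. The hard part — the step I expect to be the main obstacle — is the reverse implication $\mufn{\rho}=0\Rightarrow\mu(X(L_\infty))=0$, because a priori the fine Selmer group can grow along the finite extension $L_\infty/F_{\op{cyc}}$, whose Galois group need not be pro-$p$, so the soft argument does not reverse. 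I would handle this by routing through $F_\infty=F(E[p^\infty])$, over which $E[p^\infty]$ is trivial and whose extension $F_\infty/L_\infty$ is pro-$p$ by hypothesis: using the good functoriality of the fine Selmer group under the $p$-adic Lie extension $F_\infty/F_{\op{cyc}}$, one rephrases $\mufn{\rho}=0$ as a smallness property of $\Rfine{E[p^\infty]}{F_\infty}\cong\Rfine{(\Qp/\Zp)}{F_\infty}^{\oplus2}$ over the relevant (non-commutative) Iwasawa algebra — equivalently of the Iwasawa module $X(F_\infty)$ — and then descends from $F_\infty$ to $L_\infty$ along the pro-$p$ extension $F_\infty/L_\infty$, where Nakayama-type control arguments relate the smallness of $X(F_\infty)$ to the vanishing of $\mu(X(L_\infty))$. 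This descent is exactly where the hypotheses on $L$ are used, and making it precise is the substantive part of the argument.
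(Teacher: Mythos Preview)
The paper does not prove this theorem: its entire proof is the single sentence ``This result is \cite[Corollary 3.5]{coates2005fine}.'' So your sketch is not competing with anything in the present paper but with Coates--Sujatha's original argument, and in outline you have reproduced it: pass to a mod-$p$ finiteness statement, trivialize the coefficients, identify the resulting fine Selmer group with (a minor modification of) the classical Iwasawa module, and negotiate the passage between $F_{\op{cyc}}$, $L_\infty$ and $F_\infty=F(E[p^\infty])$ via Nakayama-type control along pro-$p$ extensions. Your route differs from theirs in that you reduce modulo $p$ and d\'evisse over $L_\infty$ first, reserving the excursion to $F_\infty$ only for the ``hard'' direction; Coates--Sujatha instead work with $E[p^\infty]$ throughout and pass symmetrically through $F_\infty$ in both directions, using that $Y(E[p^\infty]/F_{\op{cyc}})$ is finitely generated over $\Z_p$ if and only if $Y(E[p^\infty]/F_\infty)$ is finitely generated over $\Z_p\llbracket H\rrbracket$ for $H=\op{Gal}(F_\infty/F_{\op{cyc}})$. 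Either packaging works.

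One genuine slip: you assert that $Y$ is $\Lambda$-torsion ``because $\rho$ arises from an elliptic curve over a number field''. That is not known in general---cotorsion of the fine Selmer group is a theorem (via Kato) only for $F=\Q$, as the paper itself notes in the introduction. Fortunately the argument does not need it. Coates--Sujatha's Conjecture~A is the assertion that $Y$ is finitely generated over $\Z_p$, and this is equivalent to $Y/pY$ being finite with no torsion hypothesis required; your displayed equivalence ``$\mu(Y)=0\Leftrightarrow Y/pY$ finite'' should be replaced by ``$Y$ is finitely generated over $\Z_p\Leftrightarrow Y/pY$ finite'', after which the rest of your reduction goes through verbatim. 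With that correction, and with the caveat you yourself flag---that the descent from $F_\infty$ to $L_\infty$ in the hard direction still needs to be written out---the sketch is sound.
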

\begin{proof}
This result is \cite[Corollary 3.5]{coates2005fine}.
\end{proof}

\begin{theorem}\label{thm reducible mu equals 0}
Let $F$ be an abelian extension of $\Q$, and \[\rho:\op{Gal}(\bar{\Q}/\Q)\rightarrow \op{GL}_2(\Z_p)\] be the $p$-adic Galois representation associated to an elliptic curve $E_{/\Q}$. Assume that the residual representation $\bar{\rho}$ is reducible, i.e., there are characters \[\varphi_1, \varphi_2: \op{Gal}(\bar{\Q}/\Q)\rightarrow\F_p^\times\] such that $\bar{\rho}=\mtx{\varphi_1}{\ast}{0}{\varphi_2}$.
Let $S$ be a finite set of primes of $F$ containing $S_p$, all archimedean primes and all primes of bad reduction of $E_{/F}$.
Let $\rho_{F,S}$ be $\rho|_{G_{F,S}}$ i.e. $\rho_{F,S}$ is the representation $\rho$ when viewed as a representation $G_{F,S}$.
Then, $\mu^{\op{fn}}(\rho_{F,S}/F_{\op{cyc}})$ is equal to $0$.
\end{theorem}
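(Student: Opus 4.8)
The plan is to reduce the statement to Iwasawa's classical $\mu=0$ conjecture for a suitable abelian base field via Theorem~\ref{coatessujathathm}, and then invoke Ferrero--Washington. Concretely, I would produce a number field $L$ with $F\subseteq L\subseteq F(E[p^\infty])$ such that the extension $F(E[p^\infty])/L$ is pro-$p$ and $L$ is abelian over $\Q$. Granting such an $L$, Theorem~\ref{coatessujathathm} applied to $E_{/F}$ gives that $\mufn{\rho_{F,S}/F_{\op{cyc}}}=0$ if and only if $\mu=0$ for $L_{\op{cyc}}/L$, and the latter holds by \cite{ferrero1979iwasawa} because $L$ is an abelian number field. (That the set $S$ in the statement is larger than the minimal set required in Theorem~\ref{coatessujathathm} is immaterial, since $\Rfine{\mathbf{A}(\rho)}{F_{\op{cyc}}}$ does not depend on $S$, cf.\ the remark after Definition~\ref{fineselmerdef}.)

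To build $L$, I would use reducibility to confine the image of $\rho$. Since $\bar\rho=\mtx{\varphi_1}{\ast}{0}{\varphi_2}$, the image of $\bar\rho$ lies in the upper-triangular Borel subgroup $B(\F_p)\subseteq\GL_2(\F_p)$, so $\rho(G_{F,S})$ lies in the preimage $\widetilde B$ of $B(\F_p)$ under $\GL_2(\Z_p)\twoheadrightarrow\GL_2(\F_p)$. Let $U(\F_p)\subseteq B(\F_p)$ be the unipotent radical; it is the normal $p$-Sylow subgroup of $B(\F_p)$ and $B(\F_p)/U(\F_p)\cong(\F_p^\times)^2$ is abelian of order prime to $p$. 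Its preimage $\widetilde U\subseteq\widetilde B$ is pro-$p$ (the kernel $1+pM_2(\Z_p)$ of reduction being pro-$p$) and normal in $\widetilde B$; hence $N:=\rho(G_{F,S})\cap\widetilde U$ is a normal pro-$p$ subgroup of $G:=\rho(G_{F,S})$ with $G/N\hookrightarrow(\F_p^\times)^2$. Set $L:=F(E[p^\infty])^{N}$. Then $L/F$ is finite, $L\subseteq F(E[p^\infty])$, and $\Gal(F(E[p^\infty])/L)\cong N$ is pro-$p$. Tracing through the definitions, an element $g$ lies in the kernel of $G_{F,S}\to G/N$ exactly when the diagonal entries $\varphi_1(g),\varphi_2(g)$ of $\bar\rho(g)$ are both trivial; thus $L=F\cdot\Q(\varphi_1,\varphi_2)$, where $\Q(\varphi_1,\varphi_2)$ is the subfield of $\bar\Q$ cut out by the pair of characters $\varphi_1,\varphi_2$.

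This identification is the crux, and the only place where both hypotheses are essential: since $\varphi_1,\varphi_2$ are $\F_p^\times$-valued characters of $\Gal(\bar\Q/\Q)$, they factor through $\Gal(\bar\Q/\Q)^{\op{ab}}$, so $\Q(\varphi_1,\varphi_2)\subseteq\Q^{\op{ab}}$; combined with $F/\Q$ being abelian by hypothesis, the compositum $L=F\cdot\Q(\varphi_1,\varphi_2)$ lies in $\Q^{\op{ab}}$ and is therefore abelian over $\Q$. With $L$ in hand, the argument closes as in the first paragraph. The main obstacle is precisely arranging $L$ to be abelian over $\Q$: without reducibility the image of $\bar\rho$ need not be solvable and no such small $L$ exists (which is exactly why Theorem~\ref{coatessujathathm} alone does not settle Conjecture~\ref{csconjecture}), and it is reducibility \emph{over $\Q$} — so that $\varphi_1,\varphi_2$ are characters of $G_{\Q}$ — together with $F/\Q$ abelian that places $L$ inside $\Q^{\op{ab}}$. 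The remaining verifications (that $\widetilde U$ is pro-$p$ and normal, that $G/N$ injects into the torus, that $L$ is a number field) are routine group theory.
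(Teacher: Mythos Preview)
Your proposal is correct and follows essentially the same route as the paper: both take $L=F\cdot\Q(\varphi_1)\cdot\Q(\varphi_2)$, observe that $L/\Q$ is abelian (as a compositum of abelian extensions), note that $F(E[p^\infty])/L$ is pro-$p$, and then conclude via Theorem~\ref{coatessujathathm} together with Ferrero--Washington. The only difference is that you supply a careful group-theoretic justification of the pro-$p$ claim through the filtration $\widetilde U\trianglelefteq\widetilde B$, whereas the paper simply asserts it ``by construction.''
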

\begin{proof}
Let $\Q(\varphi_i)$ be the extension of $\Q$ which is fixed by $\op{ker}\varphi_i$. Note that $\Q(\varphi_i)$ is an abelian extension of $\Q$. Let $L$ be the composite of the fields $F$, $\Q(\varphi_1)$ and $\Q(\varphi_2)$. Since it is a composite of abelian extensions of $\Q$, it follows that $L/\Q$ is abelian. By the result of Ferrero and Washington \cite{ferrero1979iwasawa}, the Iwasawa $\mu$-invariant vanishes for any abelian extension. Note that by construction, the extension $F\left(E[p^\infty]\right)/L$ is pro-$p$. Since the conditions of Theorem \ref{coatessujathathm} are met, the result follows. 
\end{proof}
\par We remark that the representation $\bar{\rho}$ in the statement of Theorem \ref{thm reducible mu equals 0} is reducible if $E$ admits a rational $p$-isogeny. In particular, if $E(\Q)[p]\neq 0$, then the residual representation is of the form $\bar{\rho}=\mtx{1}{\ast}{0}{\bar{\chi}_p}$, where $\bar{\chi}_p$ is the mod-$p$ cyclotomic character. Although the $\mu=0$ conjecture for fine Selmer groups was originally stated in \cite{coates2005fine} for the $2$-dimensional Galois representations associated to elliptic curves, the following generalized conjecture is expected to hold (cf. \cite[Conjecture A, p.74]{lim2018fine}):
\begin{conjecture}\label{generalized mu equals 0 conjecture}
Let $p$ be a prime number, $F$ be a number field and $\cO$ be the ring of integers of a finite extension of $\Q_p$. Let $S$ be a finite set of primes of $F$ containing the primes that lie above $p$ and $\rho:\op{G}_{F,S}\rightarrow \op{GL}_n(\cO)$ be an integral Galois representation. Then, $\mu^{\op{fn}}(\rho)$ is equal to $0$.
\end{conjecture}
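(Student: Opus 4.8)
The plan is to reduce the conjecture, in two steps, to a statement about the residual representation $\barrho$ of $\rho$ alone, and then to confront the fact that this residual statement already subsumes Iwasawa's classical $\mu=0$ conjecture. \emph{Step 1 (reduction to the residual representation).} Put $M:=\mathbf{A}(\rho)[p]=\mathbf{T}_{\rho}/p\,\mathbf{T}_{\rho}$, a finite $\op{G}_{F,S}$-module. From the Kummer sequence $0\to M\to \mathbf{A}(\rho)\xrightarrow{\times p}\mathbf{A}(\rho)\to 0$, taking cohomology over $F_S/F_{\op{cyc}}$ and the analogous local sequences, and chasing the strict local conditions through the snake lemma, one obtains an exact sequence
\[
0\longrightarrow D\longrightarrow \Rfine{M}{F_{\op{cyc}}}\longrightarrow \Rfine{\mathbf{A}(\rho)}{F_{\op{cyc}}}[p]\longrightarrow 0,
\]
with $D$ a subquotient of the finite group $H^0(F_S/F_{\op{cyc}},\mathbf{A}(\rho))/p$. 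Dualizing, $\Rfine{\mathbf{A}(\rho)}{F_{\op{cyc}}}[p]$ is finite exactly when $Y(\mathbf{A}(\rho)/F_{\op{cyc}})/p\,Y(\mathbf{A}(\rho)/F_{\op{cyc}})$ is finite, and a finitely generated $\Lambda$-module with finite mod-$p$ quotient is automatically $\Lambda$-torsion with vanishing $\mu$-invariant. Hence $\mufn{\rho}=0$ follows once one knows that $\Rfine{M}{F_{\op{cyc}}}$ is finite.

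\emph{Step 2 (global duality and descent along $\Gamma$).} Over $F_{\op{cyc}}$ the nine-term Poitou--Tate sequence for the finite module $M$ realizes $\Rfine{M}{F_{\op{cyc}}}$ as the Pontryagin dual of a subquotient of $H^2(F_S/F_{\op{cyc}},M^{*})$, where $M^{*}:=\op{Hom}(M,\mu_p)\cong\barrho^{\vee}(1)$ is the Cartier dual of $M$, that is, the first Tate twist of the $\FF_p$-dual of $\barrho$; in particular $H^2(F_S/F_{\op{cyc}},\barrho^{\vee}(1))=0$ gives $\Rfine{M}{F_{\op{cyc}}}=0$. Since $\Gamma=\op{Gal}(F_{\op{cyc}}/F)$ is pro-$p$ with $\op{cd}_p(\Gamma)=1$, the Hochschild--Serre spectral sequence yields a surjection $H^2(\op{G}_{F,S},\barrho^{\vee}(1))\twoheadrightarrow H^2(F_S/F_{\op{cyc}},\barrho^{\vee}(1))^{\Gamma}$, and a nonzero discrete $p$-primary $\Gamma$-module has nonzero $\Gamma$-invariants (as $\Gamma$ is pro-$p$); hence $H^2(\op{G}_{F,S},\barrho^{\vee}(1))=0$ forces $H^2(F_S/F_{\op{cyc}},\barrho^{\vee}(1))=0$. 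Combining with Step~1, Conjecture~\ref{generalized mu equals 0 conjecture} reduces to the purely residual assertion
\[
H^2\bigl(\op{G}_{F,S},\,\barrho^{\vee}(1)\bigr)=0,
\]
which is exactly the criterion recorded as Theorem~\ref{theorem 3.6}.

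\emph{Step 3 (the obstacle).} This residual vanishing is the heart of the matter, and it fails in general. Already for $\rho$ equal to the $p$-adic cyclotomic character, where $\mathbf{A}(\rho)=\mu_{p^{\infty}}$ and $\barrho^{\vee}(1)\cong\FF_p$, the analysis of Coates and Sujatha \cite{coates2005fine} ties $\mufn{\rho}$ directly to the classical Iwasawa $\mu$-invariant of $F$ (and for appropriate $F$ the two coincide), so that Conjecture~\ref{generalized mu equals 0 conjecture} in this case subsumes Iwasawa's classical $\mu=0$ conjecture and is accordingly open whenever $F$ is non-abelian; correspondingly, the finiteness of $H^2(F_S/F_{\op{cyc}},\FF_p)$ is not known. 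Thus the conjecture is out of reach along this route in full generality, and the most one can hope for is to verify the sufficient condition $H^2(\op{G}_{F,S},\barrho^{\vee}(1))=0$ case by case. This is what the rest of the paper carries out, combining the global Euler characteristic formula (which computes $\dim H^2$ from $\dim H^0$, $\dim H^1$ and local terms at the places in $S$ and at infinity), the vanishing $H^0(\op{G}_{F,S},\barrho^{\vee}(1))=0$ together with a local-to-global injectivity to annihilate the $H^1$-term, Weston's unobstructedness results for modular Galois representations, and known bounds on $p$-ranks of class groups of the small fields cut out by $\barrho$ in the Artin and dihedral cases. The genuine obstruction, therefore, is not any single computation but the impossibility of securing the residual $H^2$-vanishing uniformly in $(\rho,F,p)$; what one obtains this way are the density-one and all-but-finitely-many statements announced in the introduction.
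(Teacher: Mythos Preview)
The statement under consideration is a \emph{Conjecture}, and the paper does not prove it; it is stated as the natural generalization of Conjecture~A of Coates--Sujatha and is the target at which the partial results of Sections~\ref{criteriasection}--\ref{s 5} are aimed. Your proposal correctly recognizes this: Step~3 is not a proof step but an honest admission that the residual $H^2$-vanishing cannot be established uniformly, so that the conjecture remains open. In that sense there is no ``gap'' to point to, because you are not claiming a proof.

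That said, your Steps~1--2 are a re-derivation of the paper's Theorem~\ref{Conj A criterion}, Lemma~\ref{lemma 3.3} and Theorem~\ref{theorem 3.6}, and here one point deserves tightening. In Step~2 you assert that the nine-term Poitou--Tate sequence over $F_{\op{cyc}}$ realizes $\Rfine{M}{F_{\op{cyc}}}$ as (Pontryagin dual of) a subquotient of $H^2(F_S/F_{\op{cyc}},M^{*})$. What Poitou--Tate actually produces, after passing to the limit, is a relation with the \emph{Iwasawa} cohomology $H^2_{\op{Iw}}(F_S/F_{\op{cyc}},M^{*})$ (cf.\ the paper's sequence~\eqref{sesPT} and Lemma~\ref{torsionconditions}); the passage from $H^2(F_S/F_{\op{cyc}},M^{*})=0$ to the finiteness of $H^2_{\op{Iw}}$ is an extra step which the paper handles via Jannsen's spectral sequence in the proof of Theorem~\ref{Conj A criterion}. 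Your Step~1, by contrast, is a legitimate shortcut: arguing directly with $\Rfine{\mathbf{A}(\rho)}{F_{\op{cyc}}}[p]$ via the Kummer sequence avoids the detour through $H^2_{\op{Iw}}(\Tst)$ that the paper takes, at the cost of needing the (easy) finiteness of $H^0(F_S/F_{\op{cyc}},\mathbf{A}(\rho))/p$. Once Step~2 is repaired as above, your reduction and the paper's coincide, and both terminate at the same unproved residual criterion.
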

\section{A Criterion for $\mu=0$ for the fine Selmer group of a Galois representation}
\label{criteriasection}
\par In this section, we shall discuss various criteria for the vanishing of the $\mu$-invariant of the fine Selmer group associated to a Galois representation. As in the previous section, let $p$ be an odd prime, $F$ be a number field and $S$ be a finite set of primes containing all archimedean primes and the primes above $p$. Let $\cO$ be the valuation ring of a finite extension $K/\Q_p$ with uniformizer $\varpi$ and let $\F:=\cO/\varpi$ be its residue field. Fix an integral Galois representation $\rho:\op{G}_{F,S}\rightarrow \op{GL}_n(\cO)$ on an $\cO$-lattice $\mathbf{T}_{\rho}$. Let $\bar{\rho}$ denote the residual representation $\rho\mod{\varpi}$ and set $\mathbf{V}_{\bar{\rho}}=\mathbf{T}_{\rho}\otimes_{\cO} \F$ to be the underlying vector space on which $\op{G}_{F,S}$ acts via $\bar{\rho}$. Setting $\mathbf{V}_{\rho}=\mathbf{T}_{\rho}\otimes_{\cO} K$, recall that $\mathbf{A}(\rho)=\mathbf{V}_{\rho}/\mathbf{T}_{\rho}$. We identify $\mathbf{V}_{\bar{\rho}}$ with $\mathbf{A}(\rho)[\varpi]$. Denote by $\rho^{\wedge}$ the dual of the representation $\rho$ and by $\chi_p$ be the $p$-adic cyclotomic character. We set $\rho^*$ to denote the representation $\rho^\wedge\otimes \chi_p$. Recall that $\bar{\chi}_p$ is the mod-$p$ cyclotomic character, and $\bar{\rho}^\wedge$ is the dual representation to $\bar{\rho}$. Identify $\bar{\rho}^*$ with $\bar{\rho}^\wedge \otimes \bar{\chi}_p$.
We note that $\mathbf{T}_{\rho^*}$ is isomorphic to the twist $\mathbf{T}_{{\rho}^\wedge}\otimes_{\cO} \cO(\chi_p)$.
Therefore, to summarize, $\rho$ and $\rho^*$ are integral representations acting on the $\cO$-modules $\mathbf{T}_\rho$ and $\mathbf{T}_{\rho^*}:=\op{Hom}\left(\mathbf{T}_{\rho}, \cO(\chi_p)\right)$, respectively. The representations $\bar{\rho}$ and $\bar{\rho}^*$ act on the $\F$-vector spaces $\mathbf{V}_{\bar{\rho}}$ and $\mathbf{V}_{\bar{\rho}^*}:=\op{Hom}\left(\mathbf{V}_{\bar{\rho}}, \F(\bar{\chi}_p)\right)$, respectively. We also note that $\mathbf{A}(\rho)$ is a $p$-divisible $K/\cO$-module which is identified with $\mathbf{T}_{\rho}\otimes_{\cO} K/\cO$.

\begin{conjecture}[Weak Leopoldt Conjecture for $\rho$]
With respect to notation above, $H^2(F_S/F_{\op{cyc}}, \mathbf{A}(\rho))=0$.
\end{conjecture}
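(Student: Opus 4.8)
Since this is the classical weak Leopoldt statement, which is open for an arbitrary integral $\rho$, the plan is to establish it unconditionally in the situations that actually arise in this paper and then to isolate what blocks the general case. The argument I would give is organized around the residual representation, in parallel with the criterion of Theorem~\ref{theorem 3.6}. The first step is to show that $H^2(F_S/F,\mathbf{V}_{\bar{\rho}})=0$ propagates up the cyclotomic tower. For each $n$ the group $\Gal(F_n/F)$ is a finite $p$-group, so Shapiro's lemma gives $H^2(F_S/F_n,\mathbf{V}_{\bar{\rho}})\cong H^2(F_S/F,\mathbf{V}_{\bar{\rho}}\otimes_{\F}\F[\Gal(F_n/F)])$, and $\F[\Gal(F_n/F)]$ carries a $\op{G}_{F,S}$-stable filtration, by the powers of its augmentation ideal, whose graded pieces are all trivial. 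Feeding this filtration into the long exact cohomology sequences and using that $\op{G}_{F,S}$ has $p$-cohomological dimension $2$ (so $H^3$ vanishes) yields $H^2(F_S/F_n,\mathbf{V}_{\bar{\rho}})=0$ for all $n$, hence $H^2(F_S/F_{\op{cyc}},\mathbf{V}_{\bar{\rho}})=0$ after passing to the direct limit.

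The second step lifts from residual to $p$-divisible coefficients along the $\varpi$-adic filtration of $\mathbf{A}(\rho)$. From the short exact sequences $0\to\mathbf{V}_{\bar{\rho}}\to\mathbf{A}(\rho)[\varpi^{k+1}]\xrightarrow{\ \varpi\ }\mathbf{A}(\rho)[\varpi^{k}]\to 0$, which are exact on the right because $\mathbf{A}(\rho)$ is $\varpi$-divisible, together with $\op{cd}_p\op{Gal}(F_S/F_{\op{cyc}})\le 2$, an induction on $k$ with base case $\mathbf{A}(\rho)[\varpi]=\mathbf{V}_{\bar{\rho}}$ gives $H^2(F_S/F_{\op{cyc}},\mathbf{A}(\rho)[\varpi^{k}])=0$ for all $k$; and since $H^2$ commutes with the direct limit $\mathbf{A}(\rho)=\varinjlim_k\mathbf{A}(\rho)[\varpi^{k}]$, this produces $H^2(F_S/F_{\op{cyc}},\mathbf{A}(\rho))=0$. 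This disposes of every $\rho$ whose residual representation has vanishing $H^2$ over $F$ — in particular the unobstructed and the residually dihedral representations underlying the main results of Sections~\ref{adjointsection} and~\ref{s 5} — and reduces the conjecture to the case $H^2(F_S/F,\mathbf{V}_{\bar{\rho}})\neq 0$.

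In that remaining case the bootstrap above is unavailable, and the plan is to fall back on the known unconditional instances of weak Leopoldt. If $\rho$ is a Tate twist of an Artin representation and $F/\Q$ is abelian, Shapiro's lemma reduces the vanishing to weak Leopoldt for Tate twists of the trivial representation over abelian fields, which is classical, coming from the $\Lambda$-torsionness of the relevant (twisted) unramified Iwasawa modules by Iwasawa and Ferrero--Washington~\cite{ferrero1979iwasawa}. If $\rho$ is attached to a (Hilbert) modular eigenform — in particular if it comes from an elliptic curve — then the desired vanishing is equivalent, via Jannsen's spectral sequence and Poitou--Tate duality, to the $\Lambda$-torsionness of the compact Iwasawa cohomology module $H^2_{\Iw}(F_S/F,\mathbf{T}_{\rho^*})$, and this follows from Kato's Euler system and its known analogues, with no condition on the reduction type at $p$; for the adjoint and symmetric-square representations of Section~\ref{adjointsection} one can in addition invoke the Beilinson--Flach Euler system together with the splitting $\rho_f\otimes\rho_f^{\wedge}=\mathbf{1}\oplus\op{Ad}^0\rho_f$. \textbf{The principal obstacle is the statement in full generality}: there is no known technique for proving the $\Lambda$-torsionness of $H^2_{\Iw}(F_S/F,\mathbf{T}_{\rho^*})$ for an arbitrary integral $\rho$, which is exactly why the vanishing is recorded here as a conjecture and is used at that level of generality only as a hypothesis.
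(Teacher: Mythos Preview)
The statement you are addressing is recorded in the paper as a \emph{conjecture}, not a theorem; the paper gives no proof of it and uses it only as a standing hypothesis. You correctly recognize this and offer instead a proof of the special cases that suffice for the paper's applications, together with a survey of further known instances.

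Your Steps~1 and~2 --- deducing $H^2(F_S/F_{\op{cyc}},\mathbf{A}(\rho))=0$ from the residual hypothesis $H^2(F_S/F,\mathbf{V}_{\bar\rho})=0$ --- are correct and reproduce, with $\rho$ in place of $\rho^*$, the content of the paper's Lemma~\ref{lemma 3.3} and Theorem~\ref{theorem 3.6}. The paper's route for Step~1 is a little shorter than your Shapiro-plus-filtration argument: it uses the Hochschild--Serre surjection $H^2(F_S/F,\mathbf{V}_{\bar\rho})\twoheadrightarrow H^2(F_S/F_{\op{cyc}},\mathbf{V}_{\bar\rho})^{\Gamma}$ (valid because $\op{cd}_p\Gamma=1$) and then observes that a discrete $p$-primary $\Gamma$-module with trivial $\Gamma$-invariants must vanish. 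Incidentally, your appeal to $\op{cd}_p\op{G}_{F,S}\le 2$ in Step~1 is harmless but unnecessary: the d\'evissage along the augmentation filtration already sandwiches each $H^2$ between two zeros without ever touching $H^3$. For Step~2 the paper argues directly from the sequence $0\to\mathbf{V}_{\bar\rho}\to\mathbf{A}(\rho)\xrightarrow{\varpi}\mathbf{A}(\rho)\to 0$, which shows $H^2(F_S/F_{\op{cyc}},\mathbf{A}(\rho))[\varpi]=0$ in one stroke; your inductive filtration by $\mathbf{A}(\rho)[\varpi^k]$ reaches the same conclusion.

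Your Step~3 is a fair inventory of the unconditional cases (Artin twists over abelian fields, modular and Hilbert modular Galois representations via Euler systems), but as you yourself say, none of this touches the general statement; that is precisely why the paper leaves it as a conjecture.
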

We refer the reader to \cite[Appendix B]{perrinriou} for a list of cases where the Weak Leopoldt Conjecture is known.
%With respect to notation above, \emph{the weak Leopoldt conjecture} holds for $\rho$ if $H^2(F_S/F_{\op{cyc}}, \mathbf{A}(\rho))=0$.
\par We prove a criterion which shows that the vanishing of $\mufn{\rho}$ depends only on the residual representation $\bar{\rho}$. In the special case when $\rho$ is the Galois representation on the $p$-adic Tate module of an elliptic curve, a brief sketch of the proof is given in \cite[Proposition 4.6]{sujatha2010elliptic}. %For the symmetric square representation of a modular form, the result is proven in \cite{ray2021iwasawa}.
\begin{theorem}\label{Conj A criterion}
Let $\rho:\op{G}_{F,S}\rightarrow \op{GL}_n(\cO)$ be a Galois representation such that the weak Leopoldt conjecture holds for $\rho^*$. Then, the following are equivalent.
\begin{enumerate}
    \item\label{c1 Conj A criterion} The fine Selmer group $\Rfine{\mathbf{A}(\rho)}{F_{\op{cyc}}}$ is cotorsion over $\Lambda$ and $\mufn{\rho}=0$.
    \item\label{c2 Conj A criterion} The cohomology group $H^2(F_S/F_{\op{cyc}}, \mathbf{V}_{{\bar{\rho}}^*})$ is equal to $0$.
\end{enumerate}
\end{theorem}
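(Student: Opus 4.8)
The plan is to reduce statement \eqref{c1 Conj A criterion} to a finiteness assertion about the fine Selmer group of the residual representation, and then to match that assertion with \eqref{c2 Conj A criterion} by global duality over $F_{\op{cyc}}$. Under the hypothesis that the weak Leopoldt conjecture holds for $\rho^*$ — which, by Poitou--Tate duality over the cyclotomic tower, is equivalent to $\Rfine{\mathbf{A}(\rho)}{F_{\op{cyc}}}$ being a cotorsion $\Lambda$-module — the dual module $Y:=Y(\mathbf{A}(\rho)/F_{\op{cyc}})$ is finitely generated and torsion over $\Lambda$. By the structure theorem recalled in \S\ref{prelimsection}, $\mufn{\rho}=0$ then holds if and only if $Y/\varpi Y$ is finite: among the elementary modules $\Lambda/(\varpi^{\mu_i})$ and $\Lambda/(f_j)$ in a pseudo-isomorphic decomposition of $Y$ only the second type reduces to a finite group modulo $\varpi$, and these are precisely the ones irrelevant to the $\mu$-invariant, pseudo-null errors being harmless. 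Dualizing, \eqref{c1 Conj A criterion} is thus equivalent to $\Rfine{\mathbf{A}(\rho)}{F_{\op{cyc}}}[\varpi]$ being finite.

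Next I would descend to the residual representation via the tautological sequence $0\to\mathbf{V}_{\bar{\rho}}\to\mathbf{A}(\rho)\xrightarrow{\varpi}\mathbf{A}(\rho)\to0$. Its long exact cohomology sequence over $F_{\op{cyc}}$, together with the analogous sequences at the local terms, yields
\[ 0\to H^0(F_S/F_{\op{cyc}},\mathbf{A}(\rho))/\varpi \to H^1(F_S/F_{\op{cyc}},\mathbf{V}_{\bar{\rho}}) \to H^1(F_S/F_{\op{cyc}},\mathbf{A}(\rho))[\varpi] \to 0 \]
and similarly for each $K^1_v$. The groups $H^0(F_S/F_{\op{cyc}},\mathbf{A}(\rho))$ and their local counterparts are cofinitely generated $\cO$-modules, hence finite modulo $\varpi$, and each $v\in S$ has only finitely many primes above it in $F_{\op{cyc}}$ (no prime of $F$ splits completely in the cyclotomic $\Z_p$-extension). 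A snake-lemma diagram chase then shows that the canonical map $\Rfine{\mathbf{V}_{\bar{\rho}}}{F_{\op{cyc}}} \to \Rfine{\mathbf{A}(\rho)}{F_{\op{cyc}}}[\varpi]$ has finite kernel and cokernel. Combined with the first step, \eqref{c1 Conj A criterion} becomes equivalent to the finiteness of $\Rfine{\mathbf{V}_{\bar{\rho}}}{F_{\op{cyc}}}$.

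It then remains to show that $\Rfine{\mathbf{V}_{\bar{\rho}}}{F_{\op{cyc}}}$ is finite if and only if $H^2(F_S/F_{\op{cyc}},\mathbf{V}_{\bar{\rho}^*})=0$. The Hochschild--Serre spectral sequence for $F_{\op{cyc}}/F$ together with $\op{cd}_p\op{G}_{F,S}=2$ shows that $H^2(F_S/F_{\op{cyc}},M)$ has vanishing $\Gamma$-coinvariants, i.e. is $T$-divisible, for every $p$-torsion module $M$; its local analogue (using that corestriction on $H^0$ of a $p$-torsion module along a degree-$p$ tower is multiplication by $p=0$) gives $H^2(F_{\op{cyc},w},M)=0$ for every prime $w$ of $F_{\op{cyc}}$, and kills the rightmost local term in the Poitou--Tate sequence. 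Running the nine-term Poitou--Tate sequence for the finite module $\mathbf{V}_{\bar{\rho}^*}$ (whose Cartier dual is $\mathbf{V}_{\bar{\rho}}$) at each layer $F_n$ and passing to the limit then exhibits $H^2(F_S/F_{\op{cyc}},\mathbf{V}_{\bar{\rho}^*})$ as the Pontryagin dual of a corestriction-compact form of $\Rfine{\mathbf{V}_{\bar{\rho}}}{F_{\op{cyc}}}$, which has the same $\F\lb T\rb$-rank as $\Rfine{\mathbf{V}_{\bar{\rho}}}{F_{\op{cyc}}}^\vee$ since restriction and corestriction along the tower become isomorphisms after inverting $T$. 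Being $T$-divisible, this $H^2$ is finite exactly when it is zero, which yields the desired equivalence and closes the chain. The delicate point — and the main obstacle — is this last step: making the limit of the Poitou--Tate sequences rigorous (compatibility of the nine-term sequences under restriction/corestriction, and genuine vanishing of the limiting local cohomology) and carefully tracking the difference between the restriction- and corestriction-limit versions of the residual fine Selmer group. When $\rho$ is the $p$-adic Tate module of an elliptic curve this is the content of \cite[Proposition 4.6]{sujatha2010elliptic}.
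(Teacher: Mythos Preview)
Your approach differs from the paper's in a meaningful way. The paper passes immediately to the compact side: by Lemma~\ref{torsionconditions}, condition \eqref{c1 Conj A criterion} is equivalent to $H^2_{\op{Iw}}(F_S/F_{\op{cyc}},\Tst)$ being finitely generated over $\cO$; reducing modulo $\varpi$ turns this into finiteness of $H^2_{\op{Iw}}(F_S/F_{\op{cyc}},\mathbf{V}_{\bar\rho^*})$; and Jannsen's spectral sequence \cite{jannsen2013spectral} (together with Corollary~\ref{finite if and only if 0}) is then used to compare this compact Iwasawa cohomology with the discrete group $H^2(F_S/F_{\op{cyc}},\mathbf{V}_{\bar\rho^*})$. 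You instead stay on the discrete side throughout, and your Steps~1--2 --- reducing \eqref{c1 Conj A criterion} to finiteness of $\Rfine{\mathbf{V}_{\bar\rho}}{F_{\op{cyc}}}$ via the structure theorem and a snake-lemma comparison --- are correct and pleasantly elementary. Your Hochschild--Serre argument that $H^2(F_S/F_{\op{cyc}},\mathbf{V}_{\bar\rho^*})$ is $T$-divisible is also a nice alternative to the paper's route through Theorem~\ref{ochivenjakobthm}.

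The genuine gap is in Step~3. Your Poitou--Tate identification of $H^2(F_S/F_{\op{cyc}},\mathbf{V}_{\bar\rho^*})$ with the dual of $\varprojlim_{\op{cor}}\Rfine{\mathbf{V}_{\bar\rho}}{F_n}$ is correct (granting the local $H^2$ vanishing you establish). But you then need to compare the \emph{corestriction} inverse limit $\varprojlim_{\op{cor}}\Rfine{\mathbf{V}_{\bar\rho}}{F_n}$ with the \emph{restriction} direct limit $\Rfine{\mathbf{V}_{\bar\rho}}{F_{\op{cyc}}}=\varinjlim_{\op{res}}\Rfine{\mathbf{V}_{\bar\rho}}{F_n}$, and your justification --- that restriction and corestriction ``become isomorphisms after inverting $T$'' --- does not work as stated: for $p$-torsion coefficients one has $\op{cor}\circ\op{res}=p=0$ at each step, so no localization repairs the individual transition maps. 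What you actually need is that the $\Omega$-ranks of $\varprojlim_{\op{cor}} R_n$ and of $(\varinjlim_{\op{res}} R_n)^\vee$ coincide; but this comparison between compact and discrete cohomology over the tower is exactly the content of Jannsen's spectral sequence, which the paper invokes and which your argument does not replace. So the two routes diverge in organization but converge on the same hard point, and there the paper supplies the missing tool that your sketch lacks.
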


\par The proof of the above theorem is provided later in this section, following Lemma \ref{torsionconditions}. The vanishing of $H^2(F_S/F_{\op{cyc}}, \mathbf{V}_{{\bar{\rho}}^*})$ is studied in greater detail in this article. Given a $\op{G}_{F,S}$-module $M$, the Iwasawa cohomology group $H^i_{\op{Iw}}\left(F_S/F_{\op{cyc}}, M\right)$ is defined to be the inverse limit $\varprojlim_n H^i\left(F_S/F_n, M\right)$ with respect to corestriction maps. Given a $\Lambda$-module $M$, we denote the Pontryagin dual by $M^{\vee}:=\op{Hom}_{\Z_p}\left(M, \Q_p/\Z_p\right)$. Recall that the fine Selmer group of $\mathbf{A}(\rho)$ over $F_{\op{cyc}}$ is $\Rfine{\mathbf{A}(\rho)}{F_{\op{cyc}}}$, and $Y(\mathbf{A}(\rho)/F_{\op{cyc}}):=\Rfine{\mathbf{A}(\rho)}{F_{\op{cyc}}}^{\vee}$. 
\begin{theorem}\label{ochivenjakobthm}
Let $\rho:\op{G}_{F,S}\rightarrow \op{GL}_n(\cO)$ be a Galois representation such that the weak Leopoldt conjecture holds for $\rho^*$. Then, $H^1(F_S/F_{\op{cyc}}, \mathbf{A}(\rho^*))^{\vee}$ contains no non-zero finite $\Lambda$-submodule (i.e., $\Lambda$-submodule having finite cardinality). 
\end{theorem}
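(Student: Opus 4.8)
The statement is a classical "no nontrivial finite submodule" result for Iwasawa cohomology, of the kind proved by Perrin-Riou, Greenberg, and in this generality by Ochiai–Venjakob. The plan is to realize $H^1(F_S/F_{\op{cyc}}, \mathbf{A}(\rho^*))^{\vee}$ as (a quotient, up to finite/controlled error, of) the Iwasawa cohomology module $H^1_{\op{Iw}}(F_S/F_{\op{cyc}}, \mathbf{T}_{\rho})$, which is built as an inverse limit, and then exploit the general principle that such inverse limits have no nonzero finite $\Lambda$-submodule. Concretely, I would first record the duality: by the local and global duality pairings (Poitou–Tate, in its Iwasawa-theoretic form), $H^1(F_S/F_{\op{cyc}}, \mathbf{A}(\rho^*))^{\vee}$ is isomorphic, up to a module whose size is controlled by $H^0$ and $H^2$ terms, to $\varprojlim_n H^1(F_S/F_n, \mathbf{T}_{\rho})$. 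The weak Leopoldt hypothesis for $\rho^*$, i.e. $H^2(F_S/F_{\op{cyc}}, \mathbf{A}(\rho^*)) = 0$, is exactly what is needed to make the relevant $H^2$ term vanish (or be $\Lambda$-torsion-free in the appropriate sense), so that the Pontryagin dual $H^1(F_S/F_{\op{cyc}}, \mathbf{A}(\rho^*))^\vee$ is (close to) the full Iwasawa module without spurious finite pieces entering from $H^2$.

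Next I would invoke the structural fact that $\varprojlim_n H^1(F_S/F_n, \mathbf{T}_{\rho})$ has no nonzero finite $\Lambda$-submodule. The standard argument: a finitely generated $\Lambda$-module $M$ with a nonzero finite submodule has $M \neq M^{\iota\vee\vee}$ behaviour detectable via $H^0(\Gamma', M)$ for open $\Gamma' \le \Gamma$; but an inverse limit $\varprojlim_n A_n$ over a Mittag-Leffler system, with transition maps the corestriction/norm maps, is always "$\Gamma$-cohomologically trivial enough" that $\varprojlim$ does not acquire finite submodules — more precisely, if $x \in \varprojlim A_n$ is killed by an open subgroup of $\Gamma$ and by a power of $\varpi$, one chases its components through the norm-compatible system and the surjectivity/control at each finite level to force $x = 0$. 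Alternatively, and perhaps more cleanly, I would phrase it via the characterization of finite submodules by $E^0$/$E^1$ Ext-functors: $M$ has no nonzero finite submodule iff the natural map $M \to \op{Hom}_\Lambda(\op{Hom}_\Lambda(M,\Lambda),\Lambda)$-type reflexivity holds in codimension $\le 1$; and $\varprojlim H^1(F_S/F_n,\mathbf{T}_\rho)$ is known to have projective dimension $\le 1$ over $\Lambda$ (as $\mathbf{T}_\rho$ is $\cO$-free and $\op{cd}_p G_{F,S} = 2$), hence no nonzero pseudo-null, a fortiori no nonzero finite, submodule.

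The main technical obstacle is bookkeeping the "up to controlled error" in the duality isomorphism: the exact sequence relating $\varprojlim_n H^1(F_S/F_n,\mathbf{T}_\rho)$ to $H^1(F_S/F_{\op{cyc}},\mathbf{A}(\rho^*))^\vee$ involves an $H^0$ term and an $H^2$ term, and one must check that the $H^0$ contribution (finite, coming from $\mathbf{A}(\rho^*)^{G_{F_{\op{cyc}}}}$ when that is nonzero) does \emph{not} get injected as a finite submodule of the $H^1$ dual but rather sits as a sub/quotient that can be split off, while the $H^2$ contribution is exactly annihilated by the weak Leopoldt hypothesis. I would handle this by writing down the five-term Poitou–Tate/Jannsen spectral sequence sequence carefully, checking that the finite $H^0$ piece appears on the \emph{correct} side (it becomes a quotient, not a sub, after dualizing), and concluding that $H^1(F_S/F_{\op{cyc}},\mathbf{A}(\rho^*))^\vee$ embeds into the finite-submodule-free module $\varprojlim_n H^1(F_S/F_n,\mathbf{T}_\rho)$, or is a direct summand complement thereof — in either case inheriting the property. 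The rest is routine.
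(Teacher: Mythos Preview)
The paper does not prove this theorem itself: it simply cites Greenberg's Proposition~5 for the ordinary case and Ochi--Venjakob's Theorem~4.7 for the general case. Your sketch is in the spirit of the latter argument and correctly identifies the two main ingredients --- the Jannsen spectral sequence and the homological criterion that projective dimension $\le 1$ over $\Lambda$ forces the vanishing of pseudo-null (hence finite) submodules --- but the way you assemble them has a genuine gap.

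The claimed relationship between $H^1(F_S/F_{\op{cyc}}, \mathbf{A}(\rho^*))^\vee$ and $H^1_{\op{Iw}}(F_S/F_{\op{cyc}}, \mathbf{T}_\rho)$ does not hold as stated. Poitou--Tate duality (cf.\ the paper's own exact sequence~\eqref{sesPT} with $\rho$ and $\rho^*$ swapped) places the full local cohomology $\bigoplus_{v\in S} K^1_v(\mathbf{A}(\rho^*)/F_{\op{cyc}})$ \emph{between} these two modules; these local terms are neither finite nor controlled by $H^0$ or $H^2$, so there is no embedding ``up to finite error.'' More fundamentally, the Jannsen spectral sequence you invoke at the end relates $H^*(F_S/F_{\op{cyc}}, \mathbf{A}(\rho^*))^\vee$ not to $H^*_{\op{Iw}}(\mathbf{T}_\rho)$ (the Cartier dual) but to $H^*_{\op{Iw}}(\mathbf{T}_{\rho^*})$ (the lattice with $\mathbf{A}(\rho^*)=\mathbf{T}_{\rho^*}\otimes K/\cO$), and it does so through $\op{Ext}$-groups rather than an embedding: one has
\[
E_2^{i,j}=\op{Ext}^j_\Lambda\bigl(H^i(F_S/F_{\op{cyc}},\mathbf{A}(\rho^*))^\vee,\Lambda\bigr)\ \Rightarrow\ H^{i+j}_{\op{Iw}}(F_S/F_{\op{cyc}},\mathbf{T}_{\rho^*}).
\]
The actual Ochi--Venjakob deduction runs as follows: weak Leopoldt kills the $i=2$ column, $H^3_{\op{Iw}}(\mathbf{T}_{\rho^*})=0$ by the cohomological-dimension bound, and reading off the total degree~$3$ part of the abutment forces $\op{Ext}^2_\Lambda\bigl(H^1(F_S/F_{\op{cyc}},\mathbf{A}(\rho^*))^\vee,\Lambda\bigr)=0$. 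This says precisely that $H^1(F_S/F_{\op{cyc}},\mathbf{A}(\rho^*))^\vee$ itself has projective dimension $\le 1$ over the $2$-dimensional regular local ring $\Lambda$, hence admits no nonzero pseudo-null submodule. So your instinct about projective dimension is exactly right; the point is that it is established \emph{directly} for the module in question via the $\op{Ext}$-computation, not transferred from $H^1_{\op{Iw}}$ through an embedding that does not exist.
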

\begin{proof}
The result is well known for Galois representations which are ordinary at $p$ (see \cite[Proposition 5]{Gr}) and the general case follows from \cite[Theorem 4.7]{ochi2002structure} specialized to the cyclotomic $\Z_p$-extension $F_{\op{cyc}}$.
\end{proof}
\begin{corollary}\label{finite if and only if 0}
Let $\rho:\op{G}_{F,S}\rightarrow \op{GL}_n(\cO)$ be a Galois representation and assume that the weak Leopoldt conjecture holds for $\rho^*$. Then, the following are equivalent:
\begin{enumerate}
    \item $H^2(F_S/F_{\op{cyc}}, \mathbf{V}_{\bar{\rho}^*})$ is finite,
    \item $H^2(F_S/F_{\op{cyc}}, \mathbf{V}_{\bar{\rho}^*})=0$.
\end{enumerate}
\end{corollary}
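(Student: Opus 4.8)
The plan is to reduce both conditions, via Pontryagin duality, to a statement about the $\Lambda$-module $X := H^1(F_S/F_{\op{cyc}}, \mathbf{A}(\rho^*))^{\vee}$, to which Theorem~\ref{ochivenjakobthm} applies directly. Only the implication $(1)\Rightarrow(2)$ requires an argument, since $(2)\Rightarrow(1)$ is immediate (a zero module is finite).

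First I would set up the multiplication-by-$\varpi$ sequence. Since $\mathbf{A}(\rho^*)=\mathbf{T}_{\rho^*}\otimes_{\cO}K/\cO$ is $\varpi$-divisible, multiplication by $\varpi$ is surjective on $\mathbf{A}(\rho^*)$ with kernel $\mathbf{A}(\rho^*)[\varpi]\cong\mathbf{V}_{\bar{\rho}^*}$ (the identification used throughout above, now applied to $\rho^*$). This produces a short exact sequence of discrete $\op{G}_{F,S}$-modules
\[0\longrightarrow \mathbf{V}_{\bar{\rho}^*}\longrightarrow \mathbf{A}(\rho^*)\xrightarrow{\ \varpi\ }\mathbf{A}(\rho^*)\longrightarrow 0.\]
Passing to the long exact sequence in $H^i(F_S/F_{\op{cyc}},-)$ and invoking the weak Leopoldt conjecture for $\rho^*$ to kill $H^2(F_S/F_{\op{cyc}},\mathbf{A}(\rho^*))$, I obtain
\[H^2(F_S/F_{\op{cyc}},\mathbf{V}_{\bar{\rho}^*})\;\cong\;H^1(F_S/F_{\op{cyc}},\mathbf{A}(\rho^*))\big/\varpi\,H^1(F_S/F_{\op{cyc}},\mathbf{A}(\rho^*)).\]
Dualizing, this reads $H^2(F_S/F_{\op{cyc}},\mathbf{V}_{\bar{\rho}^*})^{\vee}\cong X[\varpi]$, the $\varpi$-torsion submodule of $X$, which is a $\Lambda$-submodule of $X$ because $\varpi\in\cO$ is central in $\Lambda$. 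If $(1)$ holds, then $H^2(F_S/F_{\op{cyc}},\mathbf{V}_{\bar{\rho}^*})$ is finite, hence so is $X[\varpi]$; but $X$ has no non-zero finite $\Lambda$-submodule by Theorem~\ref{ochivenjakobthm}, so $X[\varpi]=0$, and dualizing back gives $H^2(F_S/F_{\op{cyc}},\mathbf{V}_{\bar{\rho}^*})=0$, which is $(2)$.

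I do not anticipate a real obstacle: all of the substance is already contained in Theorem~\ref{ochivenjakobthm} (the Ochi--Venjakob structure result), and everything else is formal. The only steps that warrant a moment's care are verifying that the displayed sequence is genuinely short exact --- i.e. $\varpi$-divisibility of $\mathbf{A}(\rho^*)$ together with the identification of its $\varpi$-torsion with $\mathbf{V}_{\bar{\rho}^*}$ --- and observing that $X[\varpi]$ is $\Lambda$-stable so that Theorem~\ref{ochivenjakobthm} can legitimately be applied to it.
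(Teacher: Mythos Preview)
Your proof is correct and follows essentially the same route as the paper: both use the short exact sequence $0\to\mathbf{A}(\rho^*)[\varpi]\to\mathbf{A}(\rho^*)\xrightarrow{\varpi}\mathbf{A}(\rho^*)\to 0$, invoke weak Leopoldt to kill $H^2(F_S/F_{\op{cyc}},\mathbf{A}(\rho^*))$, and then apply Theorem~\ref{ochivenjakobthm} to conclude. The only cosmetic difference is that you explicitly identify the dual of $H^2(F_S/F_{\op{cyc}},\mathbf{V}_{\bar{\rho}^*})$ with $X[\varpi]$, whereas the paper simply observes it injects into $X$; this is the same argument.
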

\begin{proof}
 For ease of notation, denote $\mathbf{A}(\rho^*)$ by $\mathbf{A}$ throughout the proof.
From the long exact sequence in cohomology associated with 
\[0\rightarrow \mathbf{A}[\varpi]\rightarrow \mathbf{A}\xrightarrow{\varpi} \mathbf{A}\rightarrow 0,\]
we have
\[ \dots \rightarrow H^2(F_S/F_{\op{cyc}}, \mathbf{A})^{\vee}\rightarrow H^2(F_S/F_{\op{cyc}}, \mathbf{A}[\varpi])^{\vee}\rightarrow H^1(F_S/F_{\op{cyc}}, \mathbf{A})^{\vee}.\]
From our hypothesis on Weak Leopoldt conjecture for $\rho^*$, we obtain that $ H^2(F_S/F_{\op{cyc}}, \mathbf{A}[\varpi])^{\vee}$ is a $\Lambda$-submodule of $H^1(F_S/F_{\op{cyc}}, \mathbf{A})^{\vee}$. By Theorem \ref{ochivenjakobthm}, $H^1(F_S/F_{\op{cyc}}, \mathbf{A})^{\vee}$ does not contain any non-zero finite $\Lambda$-submodule which proves the corollary.
\end{proof}
\par From the Poitou-Tate sequence associated to $\mathbf{A}(\rho)$, we obtain the following exact sequences (see \cite[Section 1.3.1]{perrinriou} for more details):
\begin{equation}\label{sesPT}\begin{split}
    0&\rightarrow H^0\left(F_{\op{cyc}}, \mathbf{A}(\rho)\right)\rightarrow \bigoplus_{v\in S} K_v^0(\mathbf{A}(\rho)/F_{\op{cyc}})\rightarrow H^2_{\op{Iw}}(F_S/F_{\op{cyc}},{\Tst})^{\vee} \rightarrow\\
    & R_{p^\infty}(\mathbf{A}(\rho)/F_{\op{cyc}})\rightarrow 0,\\
    0&\rightarrow R_{p^\infty}(\mathbf{A}(\rho)/F_{\op{cyc}})\rightarrow H^1(F_S/F_{\op{cyc}}, \mathbf{A}(\rho))\rightarrow \bigoplus_{v\in S} K^1_{v}(\mathbf{A}(\rho)/F_{\op{cyc}})\rightarrow\\
    & H_{\op{Iw}}^1(F_S/F_{\op{cyc}},{\Tst})^{\vee}\rightarrow H^2(F_S/F_{\op{cyc}}, \mathbf{A}(\rho)).
\end{split}\end{equation}

\begin{lemma}\label{torsionconditions}
 Let $\rho:\op{G}_{F,S}\rightarrow \op{GL}_n(\cO)$ be a Galois representation.
The following statements are equivalent
\begin{enumerate}
    \item\label{one} $Y(\mathbf{A}(\rho)/F_{\op{cyc}})$ is $\Lambda$-torsion with $\mu=0$,
    \item\label{two} $H^2_{\op{Iw}}\left(F_S/F_{\op{cyc}}, {\Tst}\right)$ is $\Lambda$-torsion with $\mu=0$.
\end{enumerate}

\end{lemma}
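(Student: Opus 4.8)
\emph{Proof strategy.} The plan is to deduce the equivalence by applying Pontryagin duality to the first four-term exact sequence in \eqref{sesPT}. Write $A := H^0(F_{\op{cyc}}, \mathbf{A}(\rho))$ and $B := \bigoplus_{v\in S} K_v^0(\mathbf{A}(\rho)/F_{\op{cyc}})$, so that the relevant row reads
\[ 0 \to A \to B \to H^2_{\op{Iw}}(F_S/F_{\op{cyc}}, \Tst)^{\vee} \to R_{p^\infty}(\mathbf{A}(\rho)/F_{\op{cyc}}) \to 0 . \]
Since $H^2_{\op{Iw}}(F_S/F_{\op{cyc}}, \Tst)$ is a finitely generated, hence compact and Pontryagin-reflexive, $\Lambda$-module, dualizing yields an exact sequence of finitely generated $\Lambda$-modules
\[ 0 \to Y(\mathbf{A}(\rho)/F_{\op{cyc}}) \to H^2_{\op{Iw}}(F_S/F_{\op{cyc}}, \Tst) \to B^{\vee} \to A^{\vee} \to 0 . \]
The argument then reduces to showing that the two ``error terms'' $A^{\vee}$ and $B^{\vee}$ are $\Lambda$-torsion with vanishing $\mu$-invariant. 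Granting this, I split the four-term sequence into $0 \to Y(\mathbf{A}(\rho)/F_{\op{cyc}}) \to H^2_{\op{Iw}}(F_S/F_{\op{cyc}}, \Tst) \to Z \to 0$ and $0 \to Z \to B^{\vee} \to A^{\vee} \to 0$, where $Z$ is the image of $H^2_{\op{Iw}}(F_S/F_{\op{cyc}}, \Tst)$ in $B^{\vee}$, and conclude using that the $\mu$-invariant is additive along short exact sequences of torsion $\Lambda$-modules, while a $\Lambda$-submodule of a torsion module with $\mu = 0$ is again torsion with $\mu = 0$.

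For the error terms, $A$ is a sub-$\cO$-module of $\mathbf{A}(\rho) \cong (K/\cO)^n$, hence cofinitely generated over $\cO$, so $A^{\vee}$ is finitely generated over $\cO$ and therefore $\Lambda$-torsion with $\mu = 0$. For $B$ I would proceed place by place: an archimedean place and a place above $p$ each have only finitely many places of $F_{\op{cyc}}$ above them, and the same holds for a place $v \nmid p$ because $v$ is unramified in the cyclotomic $\Z_p$-extension and its Frobenius has infinite order in $\Gamma$, so its decomposition group is open. Hence each $K_v^0(\mathbf{A}(\rho)/F_{\op{cyc}})$ is a finite direct sum of groups $H^0(F_{\op{cyc},w}, \mathbf{A}(\rho))$, each cofinitely generated over $\cO$; thus $B$ is cofinitely generated over $\cO$, $B^{\vee}$ is finitely generated over $\cO$ and hence $\Lambda$-torsion with $\mu = 0$, and in particular the submodule $Z \subseteq B^{\vee}$ is torsion with $\mu = 0$ as well.

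With this in place, the two implications follow immediately from $0 \to Y(\mathbf{A}(\rho)/F_{\op{cyc}}) \to H^2_{\op{Iw}}(F_S/F_{\op{cyc}}, \Tst) \to Z \to 0$: if (\ref{one}) holds, then $H^2_{\op{Iw}}(F_S/F_{\op{cyc}}, \Tst)$ is an extension of the torsion $\mu = 0$ module $Z$ by the torsion $\mu = 0$ module $Y(\mathbf{A}(\rho)/F_{\op{cyc}})$, hence torsion with $\mu = 0$; conversely, if (\ref{two}) holds, then $Y(\mathbf{A}(\rho)/F_{\op{cyc}})$, being a $\Lambda$-submodule, is torsion with $\mu = 0$. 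I expect the only delicate points to lie in the duality step: one must know that $H^2_{\op{Iw}}(F_S/F_{\op{cyc}}, \Tst)$ is finitely generated over $\Lambda$ so that $(-)^{\vee\vee}$ returns it, and one must check that the semilocal terms $K_v^0$ are cofinitely generated over $\cO$, which rests on the finite decomposition of primes away from $p$ in the cyclotomic tower. Everything downstream is formal bookkeeping with $\mu$-invariants via the structure theorem for finitely generated $\Lambda$-modules, so no serious obstacle is anticipated.
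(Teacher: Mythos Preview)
Your proposal is correct and follows essentially the same route as the paper: dualize the first Poitou--Tate sequence in \eqref{sesPT} to obtain $0 \to Y(\mathbf{A}(\rho)/F_{\op{cyc}}) \to H^2_{\op{Iw}}(F_S/F_{\op{cyc}},\Tst) \to \bigoplus_{v\in S} K_v^0(\mathbf{A}(\rho)/F_{\op{cyc}})^{\vee}$, then use finite decomposition of primes in $F_{\op{cyc}}$ to see that the right-hand term is finitely generated over $\cO$ (hence $\Lambda$-torsion with $\mu=0$). The paper simply truncates to this three-term sequence rather than carrying the extra term $A^{\vee}$, but your more careful bookkeeping and your flagging of the reflexivity of $H^2_{\op{Iw}}$ are entirely in the same spirit.
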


\begin{proof}
From the Poitou-Tate sequence, we get the following exact sequence
\[0\rightarrow Y(\mathbf{A}(\rho)/F_{\op{cyc}})\rightarrow H^2_{\op{Iw}}(F_S/F_{\op{cyc}},{\Tst})\rightarrow \bigoplus_{v\in S} K_v^0(\mathbf{A}(\rho)/F_{\op{cyc}})^{\vee}.\] 
Every non-archimedean prime $v$ is finitely decomposed in $F_{\op{cyc}}$, hence, it is not hard to see that $K_v^0(\mathbf{A}(\rho)/F_{\op{cyc}})^{\vee}$ is finitely generated as a $\Z_p$-module. As a $\Lambda$-module, $K_v^0(\mathbf{A}(\rho)/F_{\op{cyc}})^{\vee}$ is thus clearly torsion with $\mu$-invariant equal to $0$. The result is seen to follow from this.
\end{proof}

We now give a proof of Theorem \ref{Conj A criterion}. Denote the mod-$p$ Iwasawa algebra by $\Omega:=\Lambda/\varpi\Lambda$.

\begin{proof}[Proof of Theorem \ref{Conj A criterion}]
 For ease of notation, denote $\mathbf{A}({\rho^*})$ by $\mathbf{A}$ throughout the proof. {Identify $\mathbf{A}[\varpi]$ with $\mathbf{V}_{\bar{\rho}^*}$.} Note that $H_{\op{Iw}}^2(F_S/F_{\op{cyc}},{\Tst})$ is finitely generated as a $\Lambda$-module. This follows easily from the Poitou-Tate sequence \eqref{sesPT} together with the proof of Lemma~\ref{torsionconditions}. 
According to the Lemma \ref{torsionconditions}, condition \eqref{c1 Conj A criterion} is equivalent to $H^2_{\op{Iw}}\left(F_S/F_{\op{cyc}}, {\Tst}\right)$ being a finitely generated $\cO$-module. From the short exact sequence
\[{0\rightarrow \Tst\xrightarrow{\varpi} \Tst\rightarrow \mathbf{A}[\varpi]\rightarrow 0},\] we obtain
\[H_{\op{Iw}}^2(F_S/F_{\op{cyc}},{\Tst})\xrightarrow{\varpi}H_{\op{Iw}}^2(F_S/F_{\op{cyc}},{\Tst})\rightarrow H_{\op{Iw}}^2(F_S/F_{\op{cyc}},\mathbf{A}[\varpi])\rightarrow 0.\] The isomorphism
\[H_{\op{Iw}}^2(F_S/F_{\op{cyc}},\mathbf{A}[\varpi])\simeq \frac{H_{\op{Iw}}^2(F_S/F_{\op{cyc}},{\Tst})}{\varpi H_{\op{Iw}}^2(F_S/F_{\op{cyc}},{\Tst})}\]shows that \eqref{c1 Conj A criterion} is equivalent to the finiteness of $H_{\op{Iw}}^2(F_S/F_{\op{cyc}},\mathbf{A}[\varpi])$.

\par We show that \eqref{c2 Conj A criterion} is also equivalent to the finiteness of $H_{\op{Iw}}^2(F_S/F_{\op{cyc}},\mathbf{A}[\varpi])$. Given a finitely generated $\Omega$-module $M$, we set $E^j(M)$ to denote the Ext-group $\op{Ext}^j_{\Omega}(M, \Omega)$. Setting $E^{i,j}:=E^j\left(H^i(F_S/F_{\op{cyc}},\mathbf{A}[\varpi])^{\vee}\right)$, we find that the Iwasawa cohomology group $H_{\op{Iw}}^{2}(F_S/F_{\op{cyc}},\mathbf{A}[\varpi])$ is related to the cohomology groups $H^i\left(F_S/F_{\op{cyc}},\mathbf{A}[\varpi]\right)^{\vee}$ via Jannsen's spectral sequence \cite[Corollary 13]{jannsen2013spectral}. The spectral sequence states that
\[E^{i,j}\Rightarrow H_{\op{Iw}}^{i+j}(F_S/F_{\op{cyc}},\mathbf{A}[\varpi]).\]
Letting $W_i$ to be the cohomology group $H^i(F_S/F_{\op{cyc}}, \mathbf{A}[\varpi])^{\vee}$, note that for $j>0$, the adjoint Iwasawa module $E^j(W_i)$ is $\Omega$-torsion. Recall that condition \eqref{c2 Conj A criterion} is the requirement that $W_2=0$. According to Corollary \ref{finite if and only if 0}, $W_2=0$ if and only if $W_2$ is finite. On the other hand, it is clear that $W_2$ is finite if and only if $E^{2,0}=\op{Hom}_{\Omega}(W_2, \Omega)$ is $0$. Therefore, $W_2=0$ (i.e., \eqref{c2 Conj A criterion} holds) if and only if $E^{2,0}=0$. From the above spectral sequence, it follows that \eqref{c2 Conj A criterion} is equivalent to the finiteness of $H^2_{\op{Iw}}(F_S/F_{\op{cyc}}, \mathbf{A}[\varpi])$. 
 %The result follows from Lemma \ref{torsionconditions}.
\end{proof}
\begin{lemma}\label{lemma 3.3}
 Let $\rho:\op{G}_{F,S}\rightarrow \op{GL}_n(\cO)$ be a Galois representation.
{With notation as above, suppose that $H^2(F_S/F_{\op{cyc}}, \mathbf{V}_{\bar{\rho}^*})=0$. Then, the weak Leopoldt conjecture is true for $\rho^*$ and $\Rfine{\mathbf{A}(\rho)}{F_{\op{cyc}}}$ is cotorsion over $\Lambda$ with $\mufn{\rho}=0$.}
\end{lemma}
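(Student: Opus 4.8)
The plan is to reduce the assertion to Theorem~\ref{Conj A criterion}, whose only non-automatic hypothesis is the weak Leopoldt conjecture for $\rho^*$. Thus there are really two tasks: (i) upgrade the mod-$\varpi$ vanishing $H^2(F_S/F_{\op{cyc}},\mathbf{V}_{\bar{\rho}^*})=0$ to the full weak Leopoldt statement $H^2(F_S/F_{\op{cyc}},\mathbf{A}(\rho^*))=0$; and (ii) feed this into Theorem~\ref{Conj A criterion}.

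For (i), recall that $\mathbf{V}_{\bar{\rho}^*}$ is identified with $\mathbf{A}(\rho^*)[\varpi]$ and that $\mathbf{A}(\rho^*)=\mathbf{T}_{\rho^*}\otimes_{\cO}K/\cO$ is $\varpi$-divisible, so we have the short exact sequence of $\op{G}_{F,S}$-modules
\[
0\longrightarrow \mathbf{V}_{\bar{\rho}^*}\longrightarrow \mathbf{A}(\rho^*)\xrightarrow{\ \varpi\ }\mathbf{A}(\rho^*)\longrightarrow 0 .
\]
Taking $\op{Gal}(F_S/F_{\op{cyc}})$-cohomology, and using that this group, being a closed subgroup of $\op{G}_{F,S}$, has $p$-cohomological dimension at most $2$ for $p$ odd (a standard fact; see e.g.\ \cite{perrinriou}), the relevant portion of the long exact sequence reads
\[
0=H^2(F_S/F_{\op{cyc}},\mathbf{V}_{\bar{\rho}^*})\longrightarrow H^2(F_S/F_{\op{cyc}},\mathbf{A}(\rho^*))\xrightarrow{\ \varpi\ } H^2(F_S/F_{\op{cyc}},\mathbf{A}(\rho^*))\longrightarrow H^3(F_S/F_{\op{cyc}},\mathbf{V}_{\bar{\rho}^*})=0 .
\]
Hence multiplication by $\varpi$ is an isomorphism on $H^2(F_S/F_{\op{cyc}},\mathbf{A}(\rho^*))$; but that group is $p$-primary torsion, being the cohomology of a profinite group with $p$-primary discrete coefficients, and a $p$-primary torsion $\cO$-module on which $\varpi$ acts injectively must vanish. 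So $H^2(F_S/F_{\op{cyc}},\mathbf{A}(\rho^*))=0$, i.e.\ the weak Leopoldt conjecture holds for $\rho^*$. (Alternatively one can prove $H^2(F_S/F_{\op{cyc}},\mathbf{A}(\rho^*)[\varpi^m])=0$ by induction on $m$ via the sequences $0\to \mathbf{A}(\rho^*)[\varpi]\to\mathbf{A}(\rho^*)[\varpi^{m+1}]\xrightarrow{\varpi}\mathbf{A}(\rho^*)[\varpi^m]\to 0$ and pass to the direct limit.)

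For (ii), with the weak Leopoldt hypothesis for $\rho^*$ now in hand, Theorem~\ref{Conj A criterion} applies verbatim to $\rho$: its condition \eqref{c2 Conj A criterion} is precisely the hypothesis $H^2(F_S/F_{\op{cyc}},\mathbf{V}_{\bar{\rho}^*})=0$ of the present lemma, so the equivalent condition \eqref{c1 Conj A criterion} holds, namely $\Rfine{\mathbf{A}(\rho)}{F_{\op{cyc}}}$ is cotorsion over $\Lambda$ and $\mufn{\rho}=0$. I would also note explicitly that there is no circularity: the inputs to Theorem~\ref{Conj A criterion}, namely Corollary~\ref{finite if and only if 0} and Theorem~\ref{ochivenjakobthm}, each assume only weak Leopoldt for $\rho^*$, which step (i) establishes independently of Theorem~\ref{Conj A criterion}.

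I do not anticipate a real obstacle: the lemma is essentially a formal corollary of Theorem~\ref{Conj A criterion}, the only genuine content being the \emph{d\'evissage} in step (i) that promotes $\F$-coefficient vanishing to $K/\cO$-coefficient vanishing, and this hinges only on $\op{cd}_p\op{Gal}(F_S/F_{\op{cyc}})\le 2$ together with the divisibility of $\mathbf{A}(\rho^*)$.
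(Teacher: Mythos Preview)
Your proof is correct and follows essentially the same approach as the paper: both use the short exact sequence $0\to\mathbf{V}_{\bar{\rho}^*}\to\mathbf{A}(\rho^*)\xrightarrow{\varpi}\mathbf{A}(\rho^*)\to 0$ to deduce weak Leopoldt for $\rho^*$, then invoke Theorem~\ref{Conj A criterion}. The only cosmetic difference is that the paper does not appeal to $\op{cd}_p\le 2$: it simply reads off from the long exact sequence that $H^2(F_S/F_{\op{cyc}},\mathbf{V}_{\bar{\rho}^*})$ surjects onto $H^2(F_S/F_{\op{cyc}},\mathbf{A}(\rho^*))[\varpi]$, so the latter vanishes, and a $p$-primary torsion module with trivial $\varpi$-torsion is zero---your injectivity of $\varpi$ already suffices, and the surjectivity (hence the $H^3$ vanishing) is not needed.
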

\begin{proof}
{Consider the short exact sequence of Galois modules 
\[0\rightarrow \mathbf{V}_{\bar{\rho}^*}\rightarrow \mathbf{A}(\rho^*)\xrightarrow{\varpi} \mathbf{A}(\rho^*)\rightarrow 0,\]
from which we find that $H^2(F_S/F_{\op{cyc}}, \mathbf{V}_{\bar{\rho}^*})$ surjects onto $H^2(F_S/F_{\op{cyc}}, \mathbf{A}(\rho^*))[\varpi]$. 
From the assumption $H^2(F_S/F_{\op{cyc}}, \mathbf{V}_{\bar{\rho}^*})=0$, it follows that $H^2(F_S/F_{\op{cyc}}, \mathbf{A}(\rho^*))[\varpi]=0$. 
The vanishing of $H^2(F_S/F_{\op{cyc}}, \mathbf{A}(\rho^*))[\varpi]$ implies that $H^2(F_S/F_{\op{cyc}}, \mathbf{A}(\rho^*))=0$. Thus, it follows that the weak Leopoldt conjecture holds for $\rho^*$. It follows from Theorem \ref{Conj A criterion} that $\Rfine{\mathbf{A}(\rho)}{F_{\op{cyc}}}$ is cotorsion over $\Lambda$ with $\mufn{\rho}=0$.}
\end{proof}

The following criterion will be used in establishing new cases of the $\mu=0$ conjecture for the fine Selmer group.

\begin{theorem}\label{theorem 3.6}
{Let $\rho : G_{F,S} \to \op{GL}_n(\cO)$ be a Galois representation as above. Suppose that $H^2(F_S/F, \mathbf{V}_{\bar{\rho}^*})=0$. Then, $\Rfine{\mathbf{A}(\rho)}{F_{\op{cyc}}}$ is cotorsion over $\Lambda$ with $\mufn{\rho}=0$. Furthermore, the weak Leopoldt conjecture holds for $\rho^*$.}
\end{theorem}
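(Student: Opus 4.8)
\emph{Proof sketch.} The plan is to deduce the statement from Lemma~\ref{lemma 3.3}, which already records exactly the desired conclusion — the weak Leopoldt conjecture for $\rho^*$, together with the $\Lambda$-cotorsionness of $\Rfine{\mathbf{A}(\rho)}{F_{\op{cyc}}}$ and $\mufn{\rho}=0$ — under the \emph{a priori} stronger hypothesis that $H^2(F_S/F_{\op{cyc}}, \mathbf{V}_{\bar{\rho}^*})=0$. So the only thing that needs an argument is the descent implication
\[ H^2(F_S/F, \mathbf{V}_{\bar{\rho}^*})=0\quad\Longrightarrow\quad H^2(F_S/F_{\op{cyc}}, \mathbf{V}_{\bar{\rho}^*})=0. \]
Throughout, write $V:=\mathbf{V}_{\bar{\rho}^*}$, a finite $\F$-vector space, and $M:=H^2(F_S/F_{\op{cyc}}, V)$.

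First I would run the Hochschild--Serre spectral sequence
\[ E_2^{i,j}=H^i\!\big(\Gamma, H^j(F_S/F_{\op{cyc}}, V)\big)\ \Longrightarrow\ H^{i+j}(F_S/F, V) \]
attached to $1\to \op{Gal}(F_S/F_{\op{cyc}})\to \op{G}_{F,S}\to \Gamma\to 1$. Since $\Gamma\cong\Z_p$ has $p$-cohomological dimension $1$ and the coefficients are $p$-torsion, $E_2^{i,j}=0$ for every $i\ge 2$; the spectral sequence is thus concentrated in the columns $i=0,1$ and degenerates at $E_2$, and its filtration in total degree $2$ yields a short exact sequence $0\to H^1(\Gamma, H^1(F_S/F_{\op{cyc}}, V))\to H^2(F_S/F, V)\to M^{\Gamma}\to 0$. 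In particular $H^2(F_S/F, V)$ surjects onto $M^{\Gamma}$, so the hypothesis forces $M^{\Gamma}=0$.

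It then remains to pass from $M^{\Gamma}=0$ to $M=0$, and here I would use the structure of $M$ as a module over $\Omega=\Lambda/\varpi\Lambda$. Since $V$ is annihilated by $\varpi$, so is $M$, so $M$ is a discrete module over $\Omega\cong\F\llbracket T\rrbracket$, where $T=\gamma-1$ for a fixed topological generator $\gamma$ of $\Gamma$. Suppose $M\neq 0$ and choose $0\neq x\in M$. Because $\Gamma$ acts continuously on the discrete module $M$, the stabilizer of $x$ is an open subgroup $\Gamma^{p^m}$ of $\Gamma$, so the cyclic $\Omega$-submodule $\Omega x$ is a quotient of $\Omega/(\gamma^{p^m}-1)\cong\F[\Gamma/\Gamma^{p^m}]$ and is therefore nonzero and finite. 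On any nonzero finite $\F\llbracket T\rrbracket$-module the operator $T$ acts nilpotently and hence has nonzero kernel, so $(\Omega x)^{\Gamma}=\ker(T\mid \Omega x)\neq 0$; since $(\Omega x)^{\Gamma}\subseteq M^{\Gamma}$ this contradicts $M^{\Gamma}=0$. Hence $M=0$, and Lemma~\ref{lemma 3.3} applies verbatim to finish the proof.

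I do not expect a genuine obstacle here: the whole content is the two-step reduction above, resting on (i) the collapse of the Hochschild--Serre spectral sequence, which is immediate from $\op{cd}_p(\Gamma)=1$, and (ii) the elementary remark that a nonzero discrete $\F\llbracket T\rrbracket$-module always contains a nonzero element killed by $T$; the substantive arithmetic (weak Leopoldt, cotorsionness, $\mu=0$) is then imported wholesale from Lemma~\ref{lemma 3.3}. I would only flag one point of care: step (ii) cannot be replaced by an appeal to Corollary~\ref{finite if and only if 0}, since that corollary presupposes the weak Leopoldt conjecture for $\rho^*$ — which is part of what is being proved — so the direct module-theoretic argument (or an equivalent unconditional input) is what makes the descent work.
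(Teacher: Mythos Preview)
Your proof is correct and follows essentially the same route as the paper: Hochschild--Serre with $\op{cd}_p(\Gamma)=1$ gives surjectivity of $H^2(F_S/F,V)\twoheadrightarrow M^{\Gamma}$, and then $M^{\Gamma}=0$ forces $M=0$, after which Lemma~\ref{lemma 3.3} applies. The only difference is cosmetic: where you spell out the module-theoretic argument over $\F\llbracket T\rrbracket$, the paper simply invokes the standard fact that a nonzero discrete $p$-primary module over a pro-$p$ group always has nonzero invariants; your caveat about avoiding Corollary~\ref{finite if and only if 0} (to prevent circularity with the weak Leopoldt conclusion) is well taken and implicit in the paper's direct argument.
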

\begin{proof}
\par Since the $p$-cohomological dimension of $\Gamma = \Gal(F_{\op{\cyc}}/F)$ is $1$, by the Hochschild-Serre spectral sequence, the restriction map
\[H^2(F_S/F, \mathbf{V}_{\bar{\rho}^*})\rightarrow H^2(F_S/F_{\op{cyc}}, \mathbf{V}_{\bar{\rho}^*})^{\Gamma}\] is surjective. Hence, if $H^2(F_S/F, \mathbf{V}_{\bar{\rho}^*})=0$, then,
\[H^2(F_S/F_{\op{cyc}}, \mathbf{V}_{\bar{\rho}^*})^{\Gamma}=0.\] Since $\Gamma$ is pro-$p$, this implies that $H^2(F_S/F_{\op{cyc}}, \mathbf{V}_{\bar{\rho}^*})=0$, and the result follows from Lemma \ref{lemma 3.3}.
\end{proof}
\par In the remainder of this section, we outline sufficient conditions for the vanishing of {$H^2(F_S/F, \mathbf{V}_{\bar{\rho}^*})$}. We shall illustrate these conditions for Galois representations that arise from elliptic curves. In section \ref{s 5}, it is shown that these conditions are satisfied for a number of examples of \emph{dihedral Galois representations} of interest. Given $i\geq 0$, we let $\Sh^i_S(\mathbf{V}_{\bar{\rho}})$ be defined as follows
\[\Sh^i_S(\mathbf{V}_{\bar{\rho}}):=\op{ker}\left\{H^i(F_S/F, \mathbf{V}_{\bar{\rho}})\longrightarrow \bigoplus_{v\in S} H^i(F_v,\mathbf{V}_{\bar{\rho}}) \right\}.\]
Set $L$ to be the Galois extension of $F$ \emph{cut out} by $\bar{\rho}$. In other words, it is the extension of $F$ fixed by the kernel of $\bar{\rho}$. Set $H_L$ to be the mod-$p$ class group $H_L:=\op{Cl}(L)\otimes \F_p$. Denote by $H_L'$ the maximal quotient of $H_L$ such that the primes of $L$ that lie above $S$ are split in the corresponding subextension of the Hilbert class field of $L$. Note that $H_L'$ is a stable with respect to the natural action of $G=\op{Gal}(L/F)$ on $H_L$. Given a prime $v$ of $F$, set $\op{G}_v:=\op{Gal}(\bar{F}_v/F_v)$, which is viewed as a subgroup of $\op{Gal}(\bar{F}/F)$ after fixing an embedding $\bar{F} \to \bar{F_v}$. Set $\rho_{|v}$ (resp. $\bar{\rho}_{|v}$) to denote the restriction of $\rho$ (resp. $\bar{\rho}$) to $\op{G}_v$.

\begin{theorem}\label{th h2 vanishing}
Let $\rho:\op{G}_{F,S}\rightarrow \op{GL}_n(\cO)$ be a Galois representation and $\bar{\rho}:\op{G}_{F,S}\rightarrow \op{GL}_n(\F)$ be the associated residual representation.
With respect to notation above, assume that the following conditions are satisfied:
\begin{enumerate}
     \item\label{c3 of th h2} $H^1(G, \mathbf{V}_{\bar{\rho}})=0$,
    \item\label{c1 of th h2} $\op{Hom}_{G}(H_L', \mathbf{V}_{\bar{\rho}})=0$.
    \item\label{c2 of th h2} At each prime $v\in S$, the local representation $\bar{\rho}_{|v}$ does not have the trivial $1$-dimensional representation as a subrepresentation.
\end{enumerate}
Then,  $\Rfine{\mathbf{A}(\rho)}{F_{\op{cyc}}}$ is cotorsion over $\Lambda$ with $\mufn{\rho}=0$. Furthermore, the weak Leopoldt conjecture is true for $\rho^*$.
\end{theorem}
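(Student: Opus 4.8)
The strategy is to invoke Theorem~\ref{theorem 3.6}: once we know that $H^2(F_S/F,\mathbf{V}_{\bar{\rho}^*})=0$, all three assertions---cotorsionness of $\Rfine{\mathbf{A}(\rho)}{F_{\op{cyc}}}$, the vanishing $\mufn{\rho}=0$, and the weak Leopoldt conjecture for $\rho^*$---follow at once. So the whole task is to prove that vanishing, and the engine is the Poitou--Tate nine-term exact sequence applied to the finite $\op{G}_{F,S}$-module $\mathbf{V}_{\bar{\rho}^*}$. Unwinding $\bar{\rho}^*=\bar{\rho}^{\wedge}\otimes\bar{\chi}_p$, the Cartier dual $\op{Hom}(\mathbf{V}_{\bar{\rho}^*},\mu_{p^\infty})$ of $\mathbf{V}_{\bar{\rho}^*}$ is canonically identified with $\mathbf{V}_{\bar{\rho}}$, which is what makes the sequence relate $H^2$ for $\bar{\rho}^*$ to $H^1$-data for $\bar{\rho}$.

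From the tail of the Poitou--Tate sequence I would extract the exact segment
\[
\bigoplus_{v\in S}H^1(F_v,\mathbf{V}_{\bar{\rho}^*})\longrightarrow H^1(F_S/F,\mathbf{V}_{\bar{\rho}})^{\vee}\longrightarrow H^2(F_S/F,\mathbf{V}_{\bar{\rho}^*})\longrightarrow\bigoplus_{v\in S}H^2(F_v,\mathbf{V}_{\bar{\rho}^*}).
\]
The archimedean summands on the right vanish because $p$ is odd; for a finite place $v$, local Tate duality identifies $H^2(F_v,\mathbf{V}_{\bar{\rho}^*})$ with $(\mathbf{V}_{\bar{\rho}}^{\op{G}_v})^{\vee}$, and hypothesis~\eqref{c2 of th h2} says precisely that $\mathbf{V}_{\bar{\rho}}^{\op{G}_v}=0$ for every $v\in S$. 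Hence the right-hand term is zero, so $H^2(F_S/F,\mathbf{V}_{\bar{\rho}^*})$ is the cokernel of the first arrow. After applying the local duality $H^1(F_v,\mathbf{V}_{\bar{\rho}^*})\cong H^1(F_v,\mathbf{V}_{\bar{\rho}})^{\vee}$, that first arrow is the $\vee$-dual of the localization map $H^1(F_S/F,\mathbf{V}_{\bar{\rho}})\to\bigoplus_{v\in S}H^1(F_v,\mathbf{V}_{\bar{\rho}})$, so its cokernel is $\Sh^1_S(\mathbf{V}_{\bar{\rho}})^{\vee}$. The problem is thereby reduced to proving $\Sh^1_S(\mathbf{V}_{\bar{\rho}})=0$.

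For this I would use hypotheses~\eqref{c3 of th h2} and~\eqref{c1 of th h2} together with inflation--restriction along $F\subset L\subset F_S$, where $L$ is the field cut out by $\bar{\rho}$ and $G=\op{Gal}(L/F)$. Since $\op{Gal}(F_S/L)$ acts trivially on $\mathbf{V}_{\bar{\rho}}$, the five-term exact sequence shows that the kernel of the restriction map $H^1(F_S/F,\mathbf{V}_{\bar{\rho}})\to H^1(F_S/L,\mathbf{V}_{\bar{\rho}})^{G}$ is $H^1(G,\mathbf{V}_{\bar{\rho}})$, which is $0$ by~\eqref{c3 of th h2}; so restriction to $L$ is injective. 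Now take $c\in\Sh^1_S(\mathbf{V}_{\bar{\rho}})$. Its restriction is a $G$-equivariant homomorphism $\op{Gal}(F_S/L)\to\mathbf{V}_{\bar{\rho}}$ (trivial coefficients) which, because $c$ is locally trivial at every $v\in S$, kills each decomposition group above $S$; hence the corresponding extension of $L$ is unramified everywhere and splits completely at all primes above $S$, so the homomorphism factors through $H_L'$. Thus the restriction of $c$ lies in $\op{Hom}_G(H_L',\mathbf{V}_{\bar{\rho}})$, which is $0$ by~\eqref{c1 of th h2}, and injectivity of restriction forces $c=0$. Therefore $\Sh^1_S(\mathbf{V}_{\bar{\rho}})=0$, which gives $H^2(F_S/F,\mathbf{V}_{\bar{\rho}^*})=0$, and Theorem~\ref{theorem 3.6} completes the argument.

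The step that needs the most care is the Poitou--Tate bookkeeping in the middle paragraph: keeping straight the Cartier/Tate duals ($\mathbf{V}_{\bar{\rho}^*}$ versus $\mathbf{V}_{\bar{\rho}}$), legitimately discarding the archimedean local cohomology using that $p$ is odd, and recognizing the relevant connecting map as the dual of localization so that its cokernel is exactly $\Sh^1_S(\mathbf{V}_{\bar{\rho}})^{\vee}$. Once the class-field-theoretic meaning of $H_L'$, set up before the statement, is granted, the verification that $\Sh^1_S(\mathbf{V}_{\bar{\rho}})=0$ under~\eqref{c3 of th h2} and~\eqref{c1 of th h2} is routine.
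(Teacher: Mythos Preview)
Your proof is correct and follows essentially the same route as the paper's: reduce to $H^2(F_S/F,\mathbf{V}_{\bar\rho^*})=0$ via Theorem~\ref{theorem 3.6}, kill the local $H^2$-terms using local Tate duality and hypothesis~\eqref{c2 of th h2}, and then identify the remaining piece with $\Sh^1_S(\mathbf{V}_{\bar\rho})^\vee$ and annihilate it by inflation--restriction using~\eqref{c3 of th h2} and~\eqref{c1 of th h2}. The only cosmetic difference is that the paper packages the middle step as ``$\Sh^2_S(\mathbf{V}_{\bar\rho^*})\simeq\Sh^1_S(\mathbf{V}_{\bar\rho})^\vee$'' (global duality) plus the defining sequence of $\Sh^2_S$, whereas you extract the same identification directly from the nine-term Poitou--Tate sequence.
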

\begin{proof}
According to Theorem \ref{theorem 3.6}, if $H^2(F_S/F, \mathbf{V}_{\bar{\rho}^*})=0$, then the assertions follow. We show that $H^2(F_S/F, \mathbf{V}_{\bar{\rho}^*})=0$, thus proving the result. Note that $H^2(F_S/F,\mathbf{V}_{\bar{\rho}^*}) $ fits into an exact sequence
\[0\rightarrow \Sh^2_S(\mathbf{V}_{\bar{\rho}^*})\rightarrow H^2(F_S/F, \mathbf{V}_{\bar{\rho}^*})\rightarrow \bigoplus_{v\in S} H^2(F_v, \mathbf{V}_{\bar{\rho}^*}).\]
By Global duality of $\Sh$-groups \cite[Theorem 8.6.7]{NSW}, 
\[\Sh^2_S(\mathbf{V}_{\bar{\rho}^*})\simeq \Sh^1_S\left(\mathbf{V}_{\bar{\rho}}\right)^{\vee}.\] By local Tate duality \cite[pp. 91-92]{serre2013galois}, $H^2(F_v, \mathbf{V}_{\bar{\rho}^*})\simeq H^0(F_v,\mathbf{V}_{\bar{\rho}})^{\vee}$, and hence, is trivial since we are assuming that the trivial representation does not occur as a subrepresentation of $\bar{\rho}_{|v}$.
\par The hypothesis $H^1(G, \mathbf{V}_{\bar{\rho}})=0$, together with the inflation-restriction sequence applied to the extensions $F\subset L\subset F_S$,  implies that $\Sh^1_S\left(\mathbf{V}_{\bar{\rho}}\right)$ injects into $\op{Hom}_G(H_L', \mathbf{V}_{\bar{\rho}})$. Since it is assumed that $\op{Hom}_G(H_L', \mathbf{V}_{\bar{\rho}})=0$, it follows that $\Sh^1_S\left(\mathbf{V}_{\bar{\rho}}\right)=0$, and hence $\Sh^2_S(\mathbf{V}_{\bar{\rho}^*})=0$ as well. Thus, putting everything together, we have shown that $H^2(F_S/F, \mathbf{V}_{\bar{\rho}^*})$ $=0$. 
\end{proof}
\par Let $E$ be an elliptic curve defined over a number field $F$ and let $p$ be an odd prime number. Let $\rho:\op{G}_{F,S}\rightarrow \op{GL}_2(\Z_p)$ be the Galois representation on the $p$-adic Tate module of $E$ and let $\bar\rho$ be its residual representation. Let $G$ denote the Galois group $\op{Gal}(L/F)$.
For a prime $v$ of $F$, denote by $F_v$ the completion of $F$ at $v$ and by $\F_v$ the residue field of $F_v$.

\begin{theorem}\label{elliptic curve criterion section 3}
 Let $E_{/F}$ be an elliptic curve. %and $p$ be an odd prime. 
 Let $S$ be the set of primes of $F$ that consists of primes $S_p$, all archimedean primes and the primes at which $E$ has bad reduction. Let $\rho : G_{F,S} \rightarrow \op{GL}_2(\Z_p)$ be the $p$-adic Galois representation attached to $E$. With respect to above notation, assume that the following conditions are satisfied
 \begin{enumerate}
    \item \label{c1 th 3.9}the residual representation $\bar{\rho}$ is irreducible,
     \item\label{c2 th 3.9} $\op{Hom}_G\left(H_L', E[p]\right)=0$,
     \item \label{c3 th 3.9}for every prime $v\in S$, we assume that $E(\F_v)[p]=0$.
 \end{enumerate}
 Then, the following assertions hold:
\begin{enumerate}[(a)]
    \item the weak Leopoldt conjecture is true for $\rho$,
    \item $\Rfine{E[p^\infty]}{F_{\op{cyc}}}$ is cotorsion over $\Lambda$ with $\mufn{\rho}=0$.
\end{enumerate}
\end{theorem}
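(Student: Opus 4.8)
The plan is to deduce this from Theorem~\ref{th h2 vanishing} applied to $\rho$, after exploiting that $\rho$ is essentially self-dual. First I would record the self-duality: the Weil pairing on the $p$-adic Tate module is a $\op{G}_{F,S}$-equivariant perfect alternating pairing $\mathbf{T}_\rho\times\mathbf{T}_\rho\to\Zp(1)$, hence an isomorphism of Galois modules $\mathbf{T}_{\rho^*}=\op{Hom}(\mathbf{T}_\rho,\Zp(1))\cong\mathbf{T}_\rho$. Therefore $\rho^*\cong\rho$, so $\mathbf{V}_{\bar{\rho}^*}\cong\mathbf{V}_{\bar{\rho}}=E[p]$ and $\mathbf{A}(\rho)=E[p^\infty]$, and the weak Leopoldt conjecture for $\rho^*$ is literally that for $\rho$. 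Thus conclusion (a) is the weak-Leopoldt assertion of Theorem~\ref{th h2 vanishing} and conclusion (b) is its cotorsion-and-$\mu=0$ assertion; it remains to check that hypotheses \eqref{c1 th 3.9}--\eqref{c3 th 3.9} of the present theorem imply hypotheses \eqref{c3 of th h2}--\eqref{c2 of th h2} of Theorem~\ref{th h2 vanishing} for $\mathbf{V}_{\bar{\rho}}=E[p]$ and $G=\op{Gal}(L/F)\cong\bar{\rho}(\op{G}_{F,S})$.

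Hypotheses \eqref{c2 th 3.9} and \eqref{c1 of th h2} are identical. For the local condition \eqref{c2 of th h2}: local Tate duality gives $H^2(F_v,\mathbf{V}_{\bar{\rho}^*})\cong H^0(F_v,\mathbf{V}_{\bar{\rho}})^\vee$ and $H^0(\op{G}_v,E[p])=E(F_v)[p]$, so the requirement in Theorem~\ref{th h2 vanishing} that $\bar{\rho}_{|v}$ contain no trivial subrepresentation is exactly the requirement $E(F_v)[p]=0$. For $v\in S$ with $v\nmid p$ the reduction map realizes $E(F_v)[p]$ as a subgroup of the $\F_v$-points of the N\'eron special fibre (an isomorphism in the good-reduction case), so $E(\F_v)[p]=0$ yields $E(F_v)[p]=0$; for $v\mid p$ I would argue the same way using the connected-\'etale filtration of $E[p]$ over $F_v$ — the \'etale part injects into $E(\F_v)[p]$, and the multiplicative part is controlled by the relevant ramification bound — checking the handful of cases according to reduction type.

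For hypothesis \eqref{c3 of th h2}, I claim that irreducibility of $\bar{\rho}$ (hypothesis \eqref{c1 th 3.9}) forces $H^1(G,E[p])=0$. If $p\nmid|G|$ this is Maschke's theorem. Otherwise $G$ contains a nontrivial unipotent element, hence is not contained in a Borel subgroup of $\op{GL}_2(\F_p)$, and the classification of subgroups of $\op{GL}_2(\F_p)$ — combined with a commutator-subgroup argument — shows $\op{SL}_2(\F_p)\subseteq G$. Then $-I\in G$ is a central element of $G$ acting on $E[p]$ as multiplication by $-1$; since $G$ acts trivially on its own cohomology, multiplication by $-1$ must act trivially on $H^1(G,E[p])$, and as $p$ is odd this forces $H^1(G,E[p])=0$. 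With all three hypotheses of Theorem~\ref{th h2 vanishing} verified, the result follows. The proof is essentially bookkeeping once Theorem~\ref{th h2 vanishing} is in hand; the only steps demanding real care are the image-theoretic input for hypothesis \eqref{c3 of th h2} (the subgroup classification and a glance at the small primes $p=3,5$) and, a bit more delicately, the translation of the local condition at the primes dividing $p$, which I would regard as the main obstacle.
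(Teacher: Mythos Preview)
Your proposal is correct and follows the paper's approach exactly: verify the hypotheses of Theorem~\ref{th h2 vanishing} and then invoke the Weil-pairing self-duality $\rho\simeq\rho^*$ to convert the conclusion for $\rho^*$ into one for $\rho$. The only differences are cosmetic: for condition~\eqref{c3 of th h2} the paper cites \cite[Lemma~2.2]{prasad2021relating} while you supply the argument directly via Serre's subgroup dichotomy and the center-kills trick with $-I$, and for the local condition~\eqref{c2 of th h2} the paper simply says ``clearly'' whereas you (rightly) flag that $v\mid p$ needs a connected--\'etale argument---in fact no ramification bound is required there, since if the unramified quotient character $\beta$ is nontrivial then so is $\bar\chi_p\beta^{-1}$.
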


\begin{proof}
We verify that the conditions of Theorem \ref{th h2 vanishing}. Clearly, the conditions \eqref{c1 of th h2} and \eqref{c2 of th h2} are satisfied. Note that $\mathbf{V}_{\bar{\rho}}$ is identified with $E[p]$. That condition \eqref{c3 of th h2} is satisfied follows from \cite[Lemma 2.2]{prasad2021relating}. This requires the assumption that $\bar{\rho}$ is irreducible.
 Hence, Theorem \ref{th h2 vanishing} implies that the weak Leopoldt conjecture is true for $\rho^*$ and $\Rfine{E[p^\infty]}{F_{\op{cyc}}}$ is cotorsion over $\Lambda$ with $\mufn{\rho}=0$.
As $\rho$ is the $p$-adic Galois representation attached to an elliptic curve, $\rho \simeq \rho^*$.
This finishes the proof of the theorem.
%and the isomorphism $\mathbf{V}_{\bar{\rho}^*}\simeq \mathbf{V}_{\bar{\rho}}$ induced by the Weil pairing.
\end{proof}
We note here that for elliptic curves $E_{/\Q}$ with Mordell Weil rank $0$ and the Tate-Shafarevich group $\Sh(E/\Q)[p]=0$, the upper bound given by \cite[Theorem 4.2]{prasad2021relating} provides sufficient conditions for the vanishing of $\op{Hom}_G\left(H_L, E[p]\right)$. To illustrate this, consider the following example.

\begin{example}\label{example 1}
Let $E$ be the elliptic curve over $F=\Q$ given by \href{https://www.lmfdb.org/EllipticCurve/Q/11/a/1}{11a2} and Weierstrass equation $y^2+y=x^3-x^2-7820x-263580$ and set $p=7$. According to the data provided by the L-functions and Modular forms database \cite{cremona2021functions}, 
\begin{enumerate}
    \item $\op{rank} E(\Q)=0$,
    \item $E$ has conductor $11$ and thus, has good reduction at $7$,
    \item $E$ has split multiplicative reduction at $11$, thus, $\#E(\F_{11})=10$,
    \item $\#E(\F_7)=8-a_7(E)=10$,
    \item the mod-$7$ representation is irreducible,
    \item the Tamagawa product is $1$, 
    \item $\Sh(E/\Q)=0$. 
\end{enumerate}

Set $S=\{7,11,\infty\}$. The conditions \eqref{c1 th 3.9} and \eqref{c3 th 3.9} are satisfied. In order to see that condition \eqref{c2 th 3.9} is satisfied, it suffices to show that the stronger vanishing condition $\op{Hom}_G\left(H_L, E[p]\right)=0$. We refer to the notation in \cite[Theorem 4.2]{prasad2021relating}. Since the Mordell-Weil rank of $E$ is $0$ and $\Sh(E/\Q)=0$, it follows that the mod-$p$ Selmer group defined in \emph{loc.cit.} is $0$. Furthermore, since $7\nmid \#E(\F_{11})$ the set $\mathcal{I}$ in \emph{loc.cit.} is empty. Thus, by the upper bound of \cite[Theorem 4.2]{prasad2021relating}, we find that $\op{Hom}_G\left(H_L, E[p]\right)=0$, and in particular, \eqref{c2 th 3.9} is also satisfied for $E_{/\Q}$ at $p=7$.
\end{example}
\par We note here that our condition $\op{Hom}_G\left(H_L', E[p]\right)=0$ is a weaker condition than $\op{Hom}_G\left(H_L, E[p]\right)=0$. Since explicit computation with class groups of fields generated by torsion points of elliptic curves is difficult, we do not provide an example to illustrate this.

\section{The Adjoint representation of a modular form and $\mu=0$ for the fine Selmer group}
\label{adjointsection}

\par In this section, we prove results for the vanishing of the $\mu$-invariant of the fine Selmer group for the {first Tate-twist of the} \emph{adjoint Galois representation} associated with a newform. 
Here, by newform, we mean a new cuspidal eigenform. We will use this convention throughout the article.
When the newform has weight $2$ and trivial nebentypus, {the \emph{symmetric square representation} is a direct summand of this Tate-twist, and hence, in this case, we effectively obtain results for the symmetric square representation as well.} 
Moreover, given a rational elliptic curve $E$ of squarefree conductor, we prove results for the vanishing of the $\mu$-invariant of the fine Selmer group for $\op{Ad}^0(\rho_{E,p})$.
 Here $\rho_{E,p}$ is the $p$-adic Galois representation attached to $E$ and $\op{Ad}^0(\rho_{E,p})$ is the subrepresentation of the adjoint representation of $\rho_{E,p}$ consisting of matrices with trace $0$.
 
The study of these adjoint Galois representations is of special significance in deformation theory. We shall introduce ideas from deformation theory to study the vanishing of the $\mu$-invariant of the fine Selmer group.
Our methods also extend to Artin representations and \emph{neat} representations (in the sense of Mazur) and we prove results for the vanishing of the $\mu$-invariants of the fine Selmer groups of their first Tate-twists.

For a number field $K$, we will be studying continuous mod $p$ representations of $\op{Gal}(\overline{K}/K)$ which are unramified outside a finite set of primes of $K$ and their lifts to characteristic $0$.
Our focus will be on $\mu$-invariants of fine Selmer groups of such representations.
Of particular significance are the Galois representations arising from modular eigenforms.

\subsection{Adjoints of Galois representations}
\par The notion of \emph{unobstructedness} arises naturally in the study of Galois deformation theory. It was initially introduced by Mazur in \cite{mazur1989deforming} and is recalled below.
Let $\mathbb{F}$ be a finite field of characteristic $p$, $K$ be a number field and $S$ be a finite set of primes of $K$ containing all primes dividing $p$ and all archimedean primes.
Let $\bar\rho : G_{K,S} \to \op{GL}_n(\F)$ be a continuous representation.
Recall, from \S~\ref{prelimsection}, that $K_S$ is the maximal extension of $K$ unramified at all primes of $K$ lying outside $S$ and $\op{G}_{K,S}$ is the corresponding Galois group $\op{Gal}(K_S/F)$.

Let $\mathcal{O}$ be the ring of integers of a finite extension of $\Q_p$ with residue field $\F$.
Let $\op{CNL}_{\cO}$ be the category of complete local noetherian $\cO$-algebras $R$ with residue field isomorphic to $\F$. Given $R\in \op{CNL}_{\cO}$, let $\mathfrak{m}$ be the maximal ideal of $R$ and fix a residual isomorphism of $\cO$-algebras $R/\mathfrak{m}\xrightarrow{\sim} \F$. 
As $R$ is noetherian, there exist $\alpha_1,\cdots,\alpha_m \in R$ such that $\mathfrak{m}=(\alpha_1,\cdots,\alpha_m
)$.
Since $R$ is a complete local $\cO$-algebra, there exists a morphism $\phi : \cO\llbracket X_1,\cdots,X_m \rrbracket \to R$ which sends $X_i$ to $\alpha_i$ for all $1 \leq i \leq m$ (see \cite[Theorem 7.16 (a)]{eisenbud}).
Note that the map $\cO \to R/\mathfrak{m}$ induced by $\phi$ is surjective and $\alpha_1,\cdots,\alpha_m$ generate $\mathfrak{m}$.
Therefore, it follows, from \cite[Theorem 7.16 (b)]{eisenbud}, that $\phi$ is surjective. Hence, we get a presentation for $R$ of the form
%Note that, {\color{red}by \cite[Theorem 7.16]{eisenbud}}, a ring $R\in \op{CNL}_{\cO}$ has a presentation of the form 
\[R\simeq \frac{\cO\llbracket X_1,\dots , X_m\rrbracket}{\left(f_1,\dots, f_n\right)}.\] 
An $R$-lift of $\bar{\rho}$ is a Galois representation $\rho_R:\op{G}_{K,S}\rightarrow \op{GL}_n(R)$ such that $\bar{\rho}=\rho_R\mod{\mathfrak{m}}$. The subgroup of matrices $A\in \op{GL}_n(R)$ that reduce to the identity modulo the maximal ideal of $R$ is denoted by $\widehat{\op{GL}}_n(R)$. Two $R$-lifts $\rho_R$ and $\rho_R'$ of $\bar{\rho}$ are \emph{strictly equivalent} if $\rho_R=A \rho_R' A^{-1}$ for some matrix $A\in \widehat{\op{GL}}_n(R)$. An \emph{$R$-deformation} of $\bar{\rho}$ is a strict equivalence class of $R$-lifts.
\par Given a Galois representation $\rho_R:\op{G}_{K,S}\rightarrow \op{GL}_n(R)$, set $\op{Ad} \rho_R$ to be the adjoint representation of $\rho_R$. As an $R$-module, $\op{Ad}\rho_R$ consists of all $n\times n$ matrices with entries in $R$. The Galois action on $\op{Ad}\rho_R$ is defined by the adjoint action 
\[\sigma\cdot v=\rho_R(\sigma) v \rho_R(\sigma)^{-1},\] where, $\sigma\in \op{G}_{K,S}$ and $v\in \op{Ad}\rho_R$. Thus, we may view $\op{Ad}\rho_R$ as a Galois representation
\[\op{Ad}\rho_R:\op{G}_{K,S}\rightarrow \op{GL}_{n^2}(R)
.\]
In particular, specializing this construction to the case $R=\F$ and $\rho_R=\bar\rho$, we get the adjoint representation of $\bar\rho$:
\[\op{Ad}\bar\rho:\op{G}_{K,S}\rightarrow \op{GL}_{n^2}(\F)
.\]
%Denote by $S$ the set of primes consisting of $p$ and all primes at which $\rho$ is ramified. 
\begin{definition}\label{def of unobstructed}
 A representation $\bar\rho : \op{G}_{K,S}\rightarrow \op{GL}_n(\F)$ is said to be \emph{unobstructed} if $H^2(K_S/K, \op{Ad}\bar{\rho})=0$.
Following this definition, we say that $\rho : \op{G}_{K,S}\rightarrow \op{GL}_n(\cO)$ is \emph{unobstructed} if the corresponding residual representation $\bar\rho : \op{G}_{K,S}\rightarrow \op{GL}_n(\F)$ (obtained by composing $\rho$ with the natural surjective map $\cO \to \F$) is unobstructed.
%Here, $S$ is understood to be exactly the set of primes consisting of the prime $p$ and the primes $\ell\neq p$ at which $\rho$ is ramified.
\end{definition}

Associated to $\bar{\rho} : G_{K,S} \to \op{GL}_n(\F)$, consider a \emph{functor of deformations}
\[\op{Def}_{\bar{\rho}}:\op{CNL}_{\cO}\rightarrow \op{Sets}, \] which takes $R\in \op{CNL}_{\cO}$ to the set $\op{Def}_{\bar{\rho}}(R)$ consisting of $R$-deformations of $\bar{\rho}$ considered as a representation of the group $\op{G}_{K,S}$. 
 When $\bar{\rho}$ is absolutely irreducible, there is a \emph{universal Galois representation} unramified away from $S$ (see \cite[Proposition 1]{mazur1989deforming}), which we denote by 
\[\rho^{\op{univ}}:\op{G}_{K,S}\rightarrow \op{GL}_n\left(R_{\bar{\rho}}\right).\]Here, $R_{\bar{\rho}}$ is the \emph{universal deformation ring} associated to $\bar{\rho}$ and the group $\op{G}_{K,S}$ (the set $S$ being suppressed in our notation). It is universal in the sense that for any $R$-deformation $\rho_R:\op{G}_{K,S}\rightarrow \op{GL}_n(R)$ of $\bar{\rho}$, there exists a unique homomorphism of complete noetherian local $\cO$-algebras $\psi:R_{\bar{\rho}}\rightarrow R$ such that the following diagram commutes
\[\begin{tikzcd}[column sep= large]
 & \text{GL}_n(R_{\bar{\rho}}) \arrow[d]\\
\op{G}_{K,S} \arrow[ru,"\rho^{\op{univ}}"] \arrow[r, "\rho_R"] & \text{GL}_n(R),\end{tikzcd}\]
where the vertical map is induced by $\psi$.  

In general, finding the explicit structure of the universal deformation ring $R_{\bar\rho}$ is a very difficult problem.
However, if $\bar\rho$ is unobstructed, then we know that 
\[R_{\bar\rho} \simeq \cO\llbracket X_1,\cdots,X_d\rrbracket\]
where $d = \dim_{\F}(H^1(G_{K,S},\op{Ad}\bar\rho))$ (see \cite[Proposition 2]{mazur1989deforming}).
Indeed, \cite[Section 1.6]{mazur1989deforming} implies that $R_{\bar\rho}$ is a quotient of $\cO\llbracket X_1,\cdots,X_d\rrbracket$. Since $\bar\rho$ is unobstructed, \cite[Proposition 2]{mazur1989deforming} implies that the Krull dimension of $R_{\bar\rho}$ is at least $d+1$. 
This allows us to conclude that $R_{\bar\rho}$ is isomorphic to $\cO\llbracket X_1,\cdots,X_d\rrbracket$.
In particular, specializing to the case $K=\Q$ and $n=2$, we get:
\begin{proposition}\label{power series unobstructed}
Suppose that $\bar\rho:\op{G}_{\Q,S}\rightarrow \op{GL}_2(\F)$ is absolutely irreducible and unobstructed. Then:
\begin{enumerate}
    \item If $\bar\rho$ is odd, then $R_{\bar{\rho}}$ is isomorphic (as a complete noetherian local $\cO$-algebra) to the formal power series ring $\cO\llbracket X_1,X_2, X_3\rrbracket$.
    \item If $\bar\rho$ is even, then $R_{\bar{\rho}}$ is isomorphic (as a complete noetherian local $\cO$-algebra) to the formal power series ring $\cO\llbracket X\rrbracket$.
\end{enumerate}
\end{proposition}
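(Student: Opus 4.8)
The plan is to reduce the proposition to one cohomological computation. By the discussion immediately preceding the statement, an absolutely irreducible, unobstructed $\bar\rho$ has universal deformation ring $R_{\bar\rho}\simeq\cO\llbracket X_1,\dots,X_d\rrbracket$ with $d=\dim_{\F}H^1(\op{G}_{\Q,S},\op{Ad}\bar\rho)$, so it suffices to show that $d=3$ when $\bar\rho$ is odd and $d=1$ when $\bar\rho$ is even, and then substitute into this isomorphism.

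To pin down $d$, I would apply the global Euler--Poincar\'e characteristic formula over $\Q$ (see \cite[Theorem 8.7.4]{NSW}) to the finite $\op{G}_{\Q,S}$-module $M=\op{Ad}\bar\rho$. Since $\dim_{\F}M=n^2=4$, we have $\#M=q^{4}$ with $q=\#\F$, and as $\Q$ has exactly one archimedean place, a real one, the formula becomes
\[\frac{\#H^0(\op{G}_{\Q,S},M)\cdot\#H^2(\op{G}_{\Q,S},M)}{\#H^1(\op{G}_{\Q,S},M)}=\frac{\#M^{c}}{q^{4}},\]
where $c\in\op{Gal}(\mathbb{C}/\mathbb{R})$ is complex conjugation and $M^{c}=H^0(\op{Gal}(\mathbb{C}/\mathbb{R}),M)$. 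Two observations evaluate the left-hand side. First, absolute irreducibility of $\bar\rho$ together with Schur's lemma gives $H^0(\op{G}_{\Q,S},\op{Ad}\bar\rho)=\op{End}_{\op{G}_{\Q,S}}(\mathbf{V}_{\bar{\rho}})=\F$, so the first numerator factor has cardinality $q$; second, the unobstructedness hypothesis says exactly $H^2(\op{G}_{\Q,S},\op{Ad}\bar\rho)=0$, so the second factor is $1$. Hence $\#H^1(\op{G}_{\Q,S},\op{Ad}\bar\rho)=q^{5}/\#(\op{Ad}\bar\rho)^{c}$, that is,
\[d=5-\dim_{\F}(\op{Ad}\bar\rho)^{c}.\]

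It then remains to compute the invariants of $c$ acting on $\op{Ad}\bar\rho\cong M_2(\F)$ by $v\mapsto\bar\rho(c)\,v\,\bar\rho(c)^{-1}$. As $p$ is odd, $\bar\rho(c)$ is a diagonalizable involution. If $\bar\rho$ is odd, then $\det\bar\rho(c)=-1$, so, after conjugation, $\bar\rho(c)=\op{diag}(1,-1)$; on the standard matrix units $E_{ij}$ the fixed space is spanned by the two diagonal ones, whence $\dim_{\F}(\op{Ad}\bar\rho)^{c}=2$ and $d=3$. If $\bar\rho$ is even, then $\det\bar\rho(c)=1$, and a diagonalizable involution in $\op{GL}_2(\F)$ of determinant $1$ must equal $\pm I$, conjugation by which is trivial; thus $(\op{Ad}\bar\rho)^{c}=\op{Ad}\bar\rho$ has dimension $4$ and $d=1$. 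Substituting $d=3$ and $d=1$ into $R_{\bar\rho}\simeq\cO\llbracket X_1,\dots,X_d\rrbracket$ yields the two cases of the proposition.

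I do not expect a genuine obstacle: the argument is short and standard. The one point meriting care is the archimedean term, where one uses that $p$ is odd to know $\bar\rho(c)$ is semisimple, so that the odd/even dichotomy alone fixes its conjugacy class and hence $\dim_{\F}(\op{Ad}\bar\rho)^{c}$. One should also keep in mind that the identification $H^0(\op{G}_{\Q,S},\op{Ad}\bar\rho)=\F$ genuinely uses absolute (not merely $\F$-rational) irreducibility, and that the reduction to computing $d$ rests on the preceding paragraph's upgrade --- via \cite[Section 1.6]{mazur1989deforming} and \cite[Proposition 2]{mazur1989deforming} --- of ``$R_{\bar\rho}$ is a quotient of $\cO\llbracket X_1,\dots,X_d\rrbracket$ of Krull dimension at least $d+1$'' to an actual isomorphism.
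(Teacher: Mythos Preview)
Your proposal is correct and takes essentially the same approach as the paper: the paper's proof simply asserts that the global Euler characteristic formula gives $d=3$ in the odd case and $d=1$ in the even case, and then invokes \cite[Proposition~2]{mazur1989deforming} together with the preceding discussion. You have merely unpacked that one-line appeal to the Euler characteristic formula by writing it out, computing $H^0$ via Schur, using unobstructedness for $H^2$, and determining $\dim_{\F}(\op{Ad}\bar\rho)^{c}$ explicitly from the conjugacy class of $\bar\rho(c)$---all of which is the standard computation the paper leaves to the reader.
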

\begin{proof}
If $\bar\rho$ is odd, then the global Euler characteristic formula implies that $\dim_{\F}(H^1(G_{\Q,S},\op{Ad}\bar\rho))=3$.
If $\bar\rho$ is even, then the global Euler characteristic formula implies that $\dim_{\F}(H^1(G_{\Q,S},\op{Ad}\bar\rho))=1$.
The result now follows from \cite[Proposition 2]{mazur1989deforming} and the explanations given above.
%The result follows from \cite[Proposition 2]{mazur1989deforming} and the global Euler characteristic formula.
\end{proof}

%\begin{remark}
%{\color{red}
%In general, if $\bar\rho : G_{K,S} \to \op{GL}_n(\F)$ is unobstructed, then it follows, from \cite[Proposition 2]{mazur1989deforming}, that the universal deformation ring $R_{\bar\rho}$ of $\bar\rho$ is a power series ring. The number of variables of this power series ring can be obtained using the global Euler characteristic formula and it depends on $n$, $[K : \Q]$ and $\dim(H^0(\op{Gal}(\mathbb{C}/\mathbb{R}),\bar\rho))$.}
%\end{remark}
Let $\rho : G_{\Q,S} \to \op{GL}_2(\mathcal{O})$ be a representation. As we are assuming that $p$ is odd, we get a direct sum decomposition $$\op{Ad}\rho = \op{Ad}^0\rho \oplus \mathcal{O}$$ as $G_{\Q,S}$-representations with summands $\op{Ad}^0\rho$ and $\mathcal{O}$ corresponding to trace $0$ matrices and scalars, respectively. We have a similar decomposition for $\op{Ad}\bar\rho$ as well.
{Recall that we denoted the $p$-adic cyclotomic character by $\chi_p$.}
So we have $$\op{Ad}\rho(1)= \op{Ad}^0\rho(1) \oplus \mathcal{O}(\chi_p) =\op{Sym}^2(\rho) \otimes \det(\rho)^{-1}\chi_p \oplus \cO(\chi_p).$$
Thus, if $\det(\rho)= \chi_p$, then $\op{Sym}^2(\rho)$ is a subrepresentation of $\op{Ad}\rho(1)$.
We obtain the following criterion for the vanishing of the $\mu$-invariant of the fine Selmer group of the first Tate-twist of the adjoint representation (without assuming $\dim(\rho)=2$):

\begin{theorem}\label{unobstructed implies mu is zero}
Let $\rho:\op{G}_{K,S}\rightarrow \op{GL}_n(\cO)$ be a continuous Galois representation. If $\rho$ is \emph{unobstructed}, then the following assertions hold:
\begin{enumerate}
    \item\label{p1 of unobstructed implies mu is zero} the weak Leopoldt conjecture is true for $\op{Ad}\rho$.
    \item\label{p2 of unobstructed implies mu is zero} the fine Selmer group $\Rfine{\mathbf{A}(\op{Ad}\rho(1))}{K_{\op{cyc}}}$ associated to $\op{Ad}\rho(1)$ is cotorsion over $\Lambda$ with $\mufn{\op{Ad}\rho(1)}=0$.
    \item\label{p3 of unobstructed implies mu is zero} If $n=2$ and $r = \op{Sym}^2(\rho) \otimes (\det(\rho))^{-1}\chi_p$, then the fine Selmer group $\Rfine{\mathbf{A}(r)}{K_{\op{cyc}}}$ associated to $r$ is cotorsion over $\Lambda$ with $\mufn{r}=0$.
\end{enumerate}
\end{theorem}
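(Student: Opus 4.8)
The plan is to deduce all three assertions from Theorem~\ref{theorem 3.6}, applied to the representation $\op{Ad}\rho(1)$, after a short duality computation that identifies the relevant residual representation. First I would record that the adjoint representation is self-dual: the trace form $(X,Y)\mapsto\op{tr}(XY)$ on $M_n(\cO)$, the $\cO$-module of $n\times n$ matrices over $\cO$ underlying $\op{Ad}\rho$, is a perfect, $\op{G}_{K,S}$-equivariant, $\cO$-bilinear pairing (the elementary matrices $E_{ij}$ have dual basis $E_{ji}$, and equivariance is the cyclicity of the trace), so $(\op{Ad}\rho)^{\wedge}\cong\op{Ad}\rho$ as integral representations. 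Twisting by $\chi_p$ then gives
\[\bigl(\op{Ad}\rho(1)\bigr)^{*}=\bigl(\op{Ad}\rho(1)\bigr)^{\wedge}\otimes\chi_p\cong\bigl(\op{Ad}\rho\otimes\chi_p^{-1}\bigr)\otimes\chi_p\cong\op{Ad}\rho.\]
Since the adjoint construction commutes with reduction modulo $\varpi$ and the trace pairing stays perfect over $\F$, the residual representation attached to $\bigl(\op{Ad}\rho(1)\bigr)^{*}$ is $\op{Ad}\bar\rho$; writing $\tau:=\op{Ad}\rho(1)$, this reads $\mathbf{V}_{\bar\tau^{*}}\cong\op{Ad}\bar\rho$ as $\F[\op{G}_{K,S}]$-modules.

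Next I would apply Theorem~\ref{theorem 3.6} to the Galois representation $\tau=\op{Ad}\rho(1):\op{G}_{K,S}\to\op{GL}_{n^2}(\cO)$. Its sole hypothesis is the vanishing of $H^2(K_S/K,\mathbf{V}_{\bar\tau^{*}})$, which by the previous paragraph equals $H^2(K_S/K,\op{Ad}\bar\rho)$, and this is exactly the unobstructedness of $\bar\rho$ (Definition~\ref{def of unobstructed}). Hence Theorem~\ref{theorem 3.6} gives at once that $\Rfine{\mathbf{A}(\op{Ad}\rho(1))}{K_{\op{cyc}}}$ is cotorsion over $\Lambda$ with $\mufn{\op{Ad}\rho(1)}=0$, which is assertion~\eqref{p2 of unobstructed implies mu is zero}, and that the weak Leopoldt conjecture holds for $\tau^{*}\cong\op{Ad}\rho$, which is assertion~\eqref{p1 of unobstructed implies mu is zero}.

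For assertion~\eqref{p3 of unobstructed implies mu is zero} I would use the decomposition recalled just before the statement: since $n=2$ and $p$ is odd, $\op{Ad}\rho(1)=r\oplus\cO(\chi_p)$ with $r=\op{Sym}^2(\rho)\otimes(\det\rho)^{-1}\chi_p$. Both Galois cohomology in degree one and the strict local conditions defining the fine Selmer group are additive in the coefficient module, so
\[\Rfine{\mathbf{A}(\op{Ad}\rho(1))}{K_{\op{cyc}}}\cong\Rfine{\mathbf{A}(r)}{K_{\op{cyc}}}\oplus\Rfine{\mathbf{A}(\cO(\chi_p))}{K_{\op{cyc}}},\]
and dually $Y(\mathbf{A}(r)/K_{\op{cyc}})$ is a direct summand of $Y(\mathbf{A}(\op{Ad}\rho(1))/K_{\op{cyc}})$. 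As $\Lambda$-torsionness and the vanishing of the $\mu$-invariant pass to direct summands, assertion~\eqref{p2 of unobstructed implies mu is zero} forces $\Rfine{\mathbf{A}(r)}{K_{\op{cyc}}}$ to be cotorsion over $\Lambda$ with $\mufn{r}=0$. Alternatively, one applies Theorem~\ref{theorem 3.6} directly to $r$, using $r^{*}\cong\op{Ad}^{0}\rho$ together with the fact that $H^2(K_S/K,\op{Ad}^{0}\bar\rho)$ is a direct summand of $H^2(K_S/K,\op{Ad}\bar\rho)=0$.

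The only step that genuinely needs care is the self-duality of $\op{Ad}$ and its compatibility with reduction and the Tate twist — in particular the fact that the trace pairing is perfect over $\cO$ and not merely up to a finite index, so that no subtlety about the choice of lattice intervenes in the identification $\mathbf{V}_{\bar\tau^{*}}\cong\op{Ad}\bar\rho$. Once this is in place the statement is an immediate corollary of Theorem~\ref{theorem 3.6}, and I do not expect any serious obstacle.
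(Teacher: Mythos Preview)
Your proof is correct and follows essentially the same route as the paper: you identify $(\op{Ad}\rho(1))^*\cong\op{Ad}\rho$ and invoke Theorem~\ref{theorem 3.6} for parts~\eqref{p1 of unobstructed implies mu is zero} and~\eqref{p2 of unobstructed implies mu is zero}, and for part~\eqref{p3 of unobstructed implies mu is zero} your ``alternative'' argument via $r^*\cong\op{Ad}^0\rho$ and the direct-summand vanishing is exactly the paper's proof. The extra details you provide on the trace pairing and the direct-sum splitting of the fine Selmer group are fine but not needed.
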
 

\begin{proof}
%Assume that $S$ is exactly the set of primes consisting of $p$ and the primes $\ell\neq p$ at which $\rho$ is ramified. 
Observe that $\left(\op{Ad}\rho(1)\right)^* = \op{Ad}\rho$.
Unobstructedness of $\rho$ implies that $H^2(K_S/K, \op{Ad}\bar{\rho})=0$. Hence, the assertions \eqref{p1 of unobstructed implies mu is zero} and \eqref{p2 of unobstructed implies mu is zero} above follow directly from Theorem \ref{theorem 3.6}. For \eqref{p3 of unobstructed implies mu is zero}, consider the case when $n=2$ and $r=\op{Sym}^2(\rho) \otimes (\det(\rho))^{-1}\chi_p$. In this case, we find that $r^*=\op{Ad}^0\rho$. Hence, $H^2(K_S/K, \mathbf{V}_{\bar{r}^*})$ is a direct summand of $H^2(K_S/K, \op{Ad}\bar{\rho})$, and therefore is equal to $0$. The assertion in this case follows once again from Theorem \ref{theorem 3.6}.
\end{proof}

The unobstructedness of modular Galois representations $\rho$ is studied in greater detail in \cite{weston2004unobstructed}.
Before recalling the main result of \cite{weston2004unobstructed}, we briefly describe its setup.
Let $f$ be a newform (i.e. a new cuspidal eigenform) of weight $k\geq 2$ on $\Gamma_1(N)$, where $N\in \Z_{\geq 1}$. 
Let $S$ be a finite set of primes of $\Q$ containing the primes dividing $N$ and $\infty$.
Denote by $F$ the field of Fourier coefficients of $f$ and let $\mathfrak{p}$ be a prime of $\cO_F$. Set $\cO$ to be the completion of $\cO_F$ at $\mathfrak{p}$. Fix a uniformizer $\varpi$ of $\cO$ and set $\F$ to denote the residue field $\cO/\varpi$. Let $p$ be the prime of $\Q$ lying below $\mathfrak{p}$ and let $S_{\mathfrak{p}} := S \cup \{p\}$. Let
\[\rho_{f, \mathfrak{p}}: \op{G}_{\Q,S_{\mathfrak{p}}} \rightarrow \op{GL}_2(\cO)\] be an integral Galois representation associated to $f$ and the prime $\mathfrak{p}$ {by the construction of Eichler-Shimura and Deligne}. Let $\bar{\rho}_{f, \mathfrak{p}}:\op{G}_{\Q,S_{\mathfrak{p}}}\rightarrow \op{GL}_2(\F)$ be the residual representation obtained by reducing $\rho_{f,\mathfrak{p}}$ modulo $\varpi$. Note that the construction of Eichler-Shimura and Deligne gives a Galois representation over the fraction field of $\mathcal{O}$ and the Galois representation $\rho_{f,\mathfrak{p}}$ arises from a choice of a Galois stable $\cO$-lattice $\mathbf{T}$ in it. When the residual representation is absolutely irreducible, there is a unique choice of $\mathbf{T}$.
%, and thus, no ambiguity in choosing the integral Galois representation. 
In all the examples that we consider, the residual representation will be absolutely irreducible. 
%However, if this condition is not satisfied, our results shall apply for all choices of $\mathbf{T}$.
 We will mostly be working with this setup in the rest of the section.

Denote by $\op{Obs}(f)$ the set of primes $\mathfrak{p}$ of $\cO_F$ at which $\rho_{f, \mathfrak{p}}$ is \emph{obstructed} (i.e., not unobstructed). An important notion in this context is that of a \emph{congruence prime}.
\begin{definition}\label{congdef}
Let $d$ be a divisor of $N$. We say that $\mathfrak{p}$ is a \emph{congruence prime of level $d$} for $f$ if there is a newform $f'$ such that
\begin{itemize}
    \item $f'$ has weight $k$ and level $\Gamma_1(d)$;
    \item $f'$ is \emph{not} Galois conjugate to $f$;
    \item $\bar\rho_{f', \bar{\mathfrak{p}}}\simeq \bar\rho_{f, \bar{\mathfrak{p}}}$ for some prime $\bar{\mathfrak{p}}$ of $\overline{\Q}$ above $\mathfrak{p}$.
\end{itemize}
Denote by $\op{Cong}(f)$ the set of all congruence primes of $f$ (as $d$ ranges through all divisors of $N$).
\end{definition}
Since there are only finitely many newforms whose level is a divisor of $N$, it is easy to see, by comparing their Hecke eigenvalues which are not equal, that $\op{Cong}(f)$ is finite. Let $\varphi$ denote Euler's totient function.

\par The following explicit result is due to Weston and has been subsequently generalized in various directions.

\begin{theorem}[Weston]\label{weston theorem}
Let $f$ be a Hecke newform of weight $k\geq 2$ on $\Gamma_1(N)$ and let $M$ be the conductor of the nebentypus of $f$. Consider two cases.
\begin{enumerate}
    \item First, consider the case $k>2$. Then, for all but finitely many primes $\mathfrak{p}$ of $\cO_F$, the Galois representation $\rho_{f, \mathfrak{p}}$ is unobstructed. Furthermore, if $N$ is assumed to be squarefree, we have that
    \[\op{Obs}(f)\subseteq \left\{\mathfrak{p}|p; p\leq k+1\right\} \cup \left\{\mathfrak{p}|p; p|\left(N\varphi(N)\prod_{\ell|(N/M)} (\ell+1)\right)\right\}\cup \op{Cong}(f).\]
    \item Consider the case $k=2$. Then, the set $\op{Obs}(f)$ of obstructed primes $\mathfrak{p}$ has Dirichlet density zero.
\end{enumerate}
\end{theorem}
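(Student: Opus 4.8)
The plan is to establish unobstructedness directly, i.e. $H^2(\op{G}_{\Q,S_\mathfrak{p}},\op{Ad}\bar\rho_{f,\mathfrak{p}})=0$ for the relevant primes $\mathfrak{p}$, by combining global Poitou--Tate duality with the known local structure of $\rho_{f,\mathfrak{p}}$ at the places of $S_\mathfrak{p}$ and with large-image theorems for modular Galois representations. Since $p$ is odd one has $\op{Ad}\bar\rho_{f,\mathfrak{p}}=\op{Ad}^0\bar\rho_{f,\mathfrak{p}}\oplus\F$, and the summand $H^2(\op{G}_{\Q,S_\mathfrak{p}},\F)$ vanishes by soft properties of the arithmetic of $\Q$ (class number one, so $\Sh^1_S(\mu_p)=0$ and hence $\Sh^2_S(\F)=0$, while $\mu_p\not\subset\Q_v$ for all $v$ kills the local terms); so the task is to kill $H^2(\op{G}_{\Q,S_\mathfrak{p}},M)$ for $M:=\op{Ad}^0\bar\rho_{f,\mathfrak{p}}$.

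Here the trace pairing gives $M^\vee\cong M$ (again using $p$ odd), hence $M^*(1)\cong M(1)=\op{Ad}^0\bar\rho_{f,\mathfrak{p}}(1)$. By Poitou--Tate, $\Sh^2_S(M)\cong\Sh^1_S(M(1))^\vee$; when this vanishes, $H^2(\op{G}_{\Q,S_\mathfrak{p}},M)$ embeds into $\bigoplus_{v\in S_\mathfrak{p}}H^2(\Q_v,M)$, and local Tate duality identifies $H^2(\Q_v,M)\cong H^0(\Q_v,M(1))^\vee$ (the real place contributing nothing, $p$ being odd). The proof thus reduces to two assertions: (a) $\Sh^1_S(\op{Ad}^0\bar\rho_{f,\mathfrak{p}}(1))=0$, and (b) $H^0(\Q_\ell,\op{Ad}^0\bar\rho_{f,\mathfrak{p}}(1))=0$ for every finite $\ell\in S_\mathfrak{p}$. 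For $\ell\neq p$ the module $\op{Ad}^0\bar\rho_{f,\mathfrak{p}}(1)$ can acquire $\op{G}_{\Q_\ell}$-invariants only when $\ell\equiv1\pmod p$ together with a coincidence among eigenvalue ratios of $\bar\rho_{f,\mathfrak{p}}(\Frob_\ell)$ (unramified case), or when $\ell\mid N$ subject to a constraint from the local automorphic component $\pi_\ell$; inserting the explicit shape of $\bar\rho_{f,\mathfrak{p}}\vert_{\op{G}_{\Q_\ell}}$ — which for squarefree $N$ is unramified, Steinberg, or ramified principal series of small conductor — is what produces the factors $N$, $\varphi(N)$ and $\prod_{\ell\mid(N/M)}(\ell+1)$ in the stated exceptional set, and in weight $2$ still excludes only a density-zero set of residue characteristics.

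The substance of the argument is the place $\ell=p$ and the vanishing of $\Sh^1$. For (b) at $p$ one treats separately the ordinary case — where $\bar\rho_{f,\mathfrak{p}}\vert_{\op{G}_{\Q_p}}$ is reducible with an unramified quotient line — and the non-ordinary case — where for $p$ large relative to $k$ it is crystalline of Hodge--Tate weights $\{0,k-1\}$ and irreducible, governed by Fontaine--Laffaille theory; in either case one computes the semisimplification of $\op{Ad}^0\bar\rho_{f,\mathfrak{p}}(1)\vert_{\op{G}_{\Q_p}}$ and checks it has no trivial constituent, which can fail only for $p$ in a short list including $\{p\le k+1\}$ and primes dividing the conductor data already collected. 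For (a) I would invoke the theorems of Ribet and Momose: outside a finite set of $\mathfrak{p}$ when $k>2$, and outside a density-zero set when $k=2$, the image of $\bar\rho_{f,\mathfrak{p}}$ contains $\op{SL}_2$ of the residue field — the excluded primes preserving this being precisely $\op{Cong}(f)$, with the CM forms handled by a parallel, simpler argument through the attached Hecke character; largeness makes $\op{Ad}^0\bar\rho_{f,\mathfrak{p}}$ absolutely irreducible with no trivial subquotient, and an inflation--restriction argument along the field cut out by $\bar\rho_{f,\mathfrak{p}}$ — using vanishing of $H^1$ of the image group with coefficients in $\op{Ad}^0\bar\rho_{f,\mathfrak{p}}(1)$ and the Galois action on the relevant class group — forces $\Sh^1_S(\op{Ad}^0\bar\rho_{f,\mathfrak{p}}(1))=0$.

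I expect the local analysis at $p$ to be the main obstacle, and it is also the precise origin of the dichotomy. For $k>2$ every condition collected above carves out a finite set, so $\op{Obs}(f)$ is finite and, for squarefree $N$, contained in the displayed set. For $k=2$ one has $\det\bar\rho_{f,\mathfrak{p}}=\bar\chi_p$, so $\op{Ad}^0\bar\rho_{f,\mathfrak{p}}(1)=\op{Sym}^2\bar\rho_{f,\mathfrak{p}}$ can develop a $\Frob_p$-fixed line whenever a congruence among Hecke eigenvalues trivializes the relevant unramified character, and the large-image input likewise can fail infinitely often; the finite bound then breaks down and one replaces it by a Chebotarev/equidistribution argument showing that the residual characteristics at which either obstruction persists form a set of Dirichlet density zero — exactly assertion (2). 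Combining (a), (b) at $p$ and away from $p$, and the reduction of the first two paragraphs, yields $H^2(\op{G}_{\Q,S_\mathfrak{p}},\op{Ad}\bar\rho_{f,\mathfrak{p}})=0$ with the stated control on the exceptional set in each case.
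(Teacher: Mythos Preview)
The paper does not prove this result at all: the theorem is attributed to Weston, and the paper's proof consists entirely of citations to \cite[Theorems 5.4, 5.5]{weston2004unobstructed} and \cite[Theorem 1]{weston2005explicit}. Your proposal instead attempts to reconstruct Weston's actual argument. The overall architecture you describe --- Poitou--Tate reduction of $H^2(\op{G}_{\Q,S_\mathfrak{p}},\op{Ad}^0\bar\rho_{f,\mathfrak{p}})=0$ to (a) vanishing of $\Sh^1_S(\op{Ad}^0\bar\rho_{f,\mathfrak{p}}(1))$ and (b) vanishing of the local invariants $H^0(\Q_\ell,\op{Ad}^0\bar\rho_{f,\mathfrak{p}}(1))$ --- is indeed Weston's strategy, and your local analysis is essentially correct (modulo the minor slip that $\mu_p\subset\Q_\ell$ can occur for $\ell\in S$ with $\ell\equiv 1\pmod p$; this only contributes finitely many $p$, so it is harmless, but your phrasing ``$\mu_p\not\subset\Q_v$ for all $v$'' is not literally true).

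There is, however, a genuine gap in your treatment of step (a). Large image plus inflation--restriction only yields an injection of $\Sh^1_S(\op{Ad}^0\bar\rho_{f,\mathfrak{p}}(1))$ into $\op{Hom}_G(H_L',\op{Ad}^0\bar\rho_{f,\mathfrak{p}}(1))$, where $H_L'$ is a quotient of the mod-$p$ class group of $L=\Q(\bar\rho_{f,\mathfrak{p}})$. You then assert that ``the Galois action on the relevant class group'' forces this Hom-group to vanish, but nothing you have said implies that: large image constrains $G$, not $\op{Cl}(L)$. Weston does not argue via class groups here. He instead embeds $\Sh^1_S$ into the Bloch--Kato Selmer group $H^1_\emptyset(\Q,\mathbf{A}(\op{Ad}^0\rho_{f,\mathfrak{p}}))$ and invokes the theorem of Diamond--Flach--Guo (building on Hida) that the length of this Selmer group equals the $\mathfrak{p}$-adic valuation of the congruence ideal $\eta_f$; since $\eta_f$ is a fixed nonzero ideal, its valuation is zero for all but finitely many $\mathfrak{p}$. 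This is the deep input you are missing, and it is also where $\op{Cong}(f)$ genuinely enters --- not, as you suggest, as the set of primes where large image fails (for CM forms large image fails everywhere, yet $\op{Cong}(f)$ is still finite), but through the congruence ideal bounding the Selmer group and through level-lowering in the local analysis at $\ell\mid N$. Compare the parallel argument the paper spells out in the proof of Theorem~\ref{ell thm}.
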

\begin{proof}
The first part is obtained by combining \cite[Theorem 5.4]{weston2004unobstructed} with \cite[Theorem 1]{weston2005explicit}.
The second part follows directly from \cite[Theorem 5.5]{weston2004unobstructed} (see also \cite[Corollary 2]{mazur1997fern} when $f$ is a non-CM modular newform having weight $2$, trivial nebentypus and rational Hecke eigenvalues).
%The results are proved in \cite{weston2004unobstructed} and \cite{weston2005explicit}.
\end{proof}
Combining the above theorem with Theorem \ref{unobstructed implies mu is zero}, one obtains an immediate application towards the vanishing of the $\mu$-invariant of the fine Selmer group. 
\begin{theorem}\label{main theorem on adjoints}
Let $f$ be a Hecke newform of weight $k\geq 2$ on $\Gamma_1(N)$ and let $M$ be the conductor of the nebentypus $\epsilon$ of $f$. Let $F$ be the number field generated by its Hecke eigenvalues and let $\cO_F$ be the ring of integers of $F$. Consider two cases.
\begin{enumerate}
    \item If $k>2$, then, for all but finitely many primes $\mathfrak{p}$ of $\cO_F$, the following assertions hold:
\begin{enumerate}
    \item the weak Leopoldt conjecture holds for $\op{Ad}\rho_{f,\mathfrak{p}}$.
    \item {Let $r$ denote any one of the representations {$\op{Sym}^2(\rho_{f,\mathfrak{p}}) \otimes (\det(\rho_{f,\mathfrak{p}}))^{-1} \chi_p$} or $\op{Ad}\rho_{f,\mathfrak{p}}(1)$. Then the fine Selmer group associated to $r$ is cotorsion over $\Lambda$ with $\mufn{r}=0$.}
\end{enumerate} Furthermore, if $N$ is squarefree, then the above assertions hold for all primes outside the finite set
    \[\op{Obs}(f)\subseteq \left\{\mathfrak{p}|p; p\leq k+1\right\} \cup \left\{\mathfrak{p}|p; p|\left(N\varphi(N)\prod_{\ell|(N/M)} (\ell+1)\right)\right\}\cup \op{Cong}(f).\]
    \item Consider the case $k=2$. Then, for a set of primes $\mathfrak{p}$ of $\cO_F$ of Dirichlet density one, the following assertions hold:
\begin{enumerate}
    \item the weak Leopoldt conjecture is true for $\op{Ad}\rho_{f,\mathfrak{p}}$.
    \item {For $r\in \{\op{Sym}^2(\rho_{f,\mathfrak{p}}) \otimes \epsilon^{-1},\op{Ad}\rho_{f,\mathfrak{p}}(1)\}$, the fine Selmer group associated to $r$ is cotorsion over $\Lambda$ with $\mufn{r}=0$.}
\end{enumerate}
\end{enumerate}
\end{theorem}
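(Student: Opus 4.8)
The plan is to obtain the result as a direct application of Weston's unobstructedness theorem (Theorem~\ref{weston theorem}) together with the criterion of Theorem~\ref{unobstructed implies mu is zero}. Fix the newform $f$, its field of Fourier coefficients $F$, and a prime $\mathfrak{p}$ of $\cO_F$ lying above the rational prime $p$; following Weston's setup, write $\rho_{f,\mathfrak{p}} : \op{G}_{\Q, S_{\mathfrak{p}}} \to \op{GL}_2(\cO)$ for the associated integral representation, where $S_{\mathfrak{p}} = S \cup \{p\}$ and $\cO$ is the completion of $\cO_F$ at $\mathfrak{p}$. For all but finitely many $\mathfrak{p}$ the residual representation $\bar\rho_{f,\mathfrak{p}}$ is absolutely irreducible, so the Galois-stable lattice, and hence $\rho_{f,\mathfrak{p}}$, is well defined; and in any case Theorem~\ref{unobstructed implies mu is zero} imposes no irreducibility hypothesis, so this point causes no difficulty.

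First I would invoke Theorem~\ref{weston theorem}. When $k > 2$, it shows that $\rho_{f,\mathfrak{p}}$ is unobstructed for all $\mathfrak{p}$ outside the finite set $\op{Obs}(f)$, and that $\op{Obs}(f)$ is contained in the explicit set displayed in the statement whenever $N$ is squarefree. When $k = 2$, it shows that $\op{Obs}(f)$ has Dirichlet density zero, so that $\rho_{f,\mathfrak{p}}$ is unobstructed for a set of primes $\mathfrak{p}$ of Dirichlet density one. By definition, unobstructedness of $\rho_{f,\mathfrak{p}}$ means $H^2(\Q_{S_{\mathfrak{p}}}/\Q, \op{Ad}\bar\rho_{f,\mathfrak{p}}) = 0$.

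Next I would feed this into Theorem~\ref{unobstructed implies mu is zero}, applied with $K = \Q$, $n = 2$, $S = S_{\mathfrak{p}}$, and $\rho = \rho_{f,\mathfrak{p}}$. Part~\eqref{p1 of unobstructed implies mu is zero} yields the weak Leopoldt conjecture for $\op{Ad}\rho_{f,\mathfrak{p}}$; part~\eqref{p2 of unobstructed implies mu is zero} yields that $\Rfine{\mathbf{A}(\op{Ad}\rho_{f,\mathfrak{p}}(1))}{\Q_{\op{cyc}}}$ is $\Lambda$-cotorsion with $\mufn{\op{Ad}\rho_{f,\mathfrak{p}}(1)} = 0$; and part~\eqref{p3 of unobstructed implies mu is zero}, which applies precisely because $n = 2$, yields the same conclusion for $r = \op{Sym}^2(\rho_{f,\mathfrak{p}}) \otimes (\det\rho_{f,\mathfrak{p}})^{-1}\chi_p$. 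This already establishes both cases of the theorem in the form stated with $r = \op{Sym}^2(\rho_{f,\mathfrak{p}}) \otimes (\det\rho_{f,\mathfrak{p}})^{-1}\chi_p$. To recover the weight-$2$ statement, recall that the Eichler--Shimura--Deligne construction gives $\det\rho_{f,\mathfrak{p}} = \epsilon\,\chi_p^{k-1}$, where $\epsilon$ is the nebentypus of $f$ viewed as a Galois character; hence for $k = 2$ one has $(\det\rho_{f,\mathfrak{p}})^{-1}\chi_p = \epsilon^{-1}$, so $r = \op{Sym}^2(\rho_{f,\mathfrak{p}}) \otimes \epsilon^{-1}$, as in the statement.

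Since the argument is a formal combination of two results already in hand, there is no genuine obstacle. The only points requiring a moment's care are that the ramification set $S_{\mathfrak{p}}$ used by Weston is the one allowed in Theorem~\ref{unobstructed implies mu is zero} (it is, as both permit $p$ among the bad primes), and the bookkeeping of the determinant twist needed to move between the $\op{Sym}^2$ formulation and the $\op{Ad}$ (resp. $\op{Ad}^0$) formulation, which is taken care of by the decomposition $\op{Ad}\bar\rho = \op{Ad}^0\bar\rho \oplus \F$ used inside the proof of Theorem~\ref{unobstructed implies mu is zero}.
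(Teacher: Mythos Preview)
Your proposal is correct and follows exactly the paper's approach: the paper's proof is the single line ``The result follows directly from Theorems~\ref{unobstructed implies mu is zero} and~\ref{weston theorem},'' and you have simply unpacked that combination, including the determinant bookkeeping $\det\rho_{f,\mathfrak{p}}=\epsilon\chi_p^{k-1}$ needed to rewrite $\op{Sym}^2(\rho_{f,\mathfrak{p}})\otimes(\det\rho_{f,\mathfrak{p}})^{-1}\chi_p$ as $\op{Sym}^2(\rho_{f,\mathfrak{p}})\otimes\epsilon^{-1}$ when $k=2$.
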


\begin{proof}
The result follows directly from Theorems \ref{unobstructed implies mu is zero} and \ref{weston theorem}.
\end{proof}

%\begin{remark}
%Observe that when the rational prime $p$ underlying $\mathfrak{p}$ is odd, we get a direct sum decomposition $$\op{Ad}(\bar\rho_{f,\mathfrak{p}}) = \F \oplus \op{Ad}^0(\bar\rho_{f,\mathfrak{p}})$$ as $G_{\Q,S}$-representations. 
%Here $\F$ is the trivial representation of $G_{\Q,S}$ over $\F$ given by the scalar matrices of $\op{Ad}(\bar\rho_{f,\mathfrak{p}})$ and $\op{Ad}^0(\bar\rho_{f,\mathfrak{p}})$ is the subrepresentation of $\op{Ad}(\bar\rho_{f,\mathfrak{p}})$ consisting of matrices with trace $0$.
%In the same way, we get a direct sum decomposition $$\op{Ad}(\rho_{f,\mathfrak{p}}) = \cO \oplus \op{Ad}^0(\rho_{f,\mathfrak{p}})$$ as $G_{\Q,S}$-representations with summands $\cO$ and $\op{Ad}^0(\rho_{f,\mathfrak{p}})$ corresponding to scalar matrices and trace $0$ matrices, respectively. 
%Thus, $\cO$ and $\op{Ad}^0(\rho_{f,\mathfrak{p}})$ are lifts of $\F$ and $\op{Ad}^0(\bar\rho_{f,\mathfrak{p}})$, respectively.
%Therefore, in this case, if Theorem~\ref{main theorem on adjoints} holds for $\op{Ad}(\rho_{f,\mathfrak{p}})$, then it also holds for $\cO$, $\op{Ad}^0(\rho_{f,\mathfrak{p}})$ and for any $\cO$-lift of any $G_{\Q,S}$-representation which appears as a direct summand of $\op{Ad}(\bar\rho_{f,\mathfrak{p}})$ (as a $G_{\Q,S}$-representation).
%Note that the vanishing of $\mufn{\cO}$ follows from the vanishing of the $\mu$-invariant of $\Q_{\op{\cyc}}/\Q$ which is well known.
%\end{remark}

\begin{remark}
If $f$ is a newform of weight $2$ with trivial nebentypus, then, under the notation established above, Theorem~\ref{main theorem on adjoints} implies that for a set of primes $\mathfrak{p}$ of $\mathcal{O}_F$ of Dirichlet density one, the fine Selmer group associated to the symmetric square representation $\op{Sym}^2(\rho_{f,\mathfrak{p}})$ is co-torsion over $\Lambda$ with $\mufn{\op{Sym}^2(\rho_{f,\mathfrak{p}})}=0$.
\end{remark}

\begin{remark}
For Galois representations associated to elliptic curves of rank $0$ (not the {adjoint or symmetric square} representation, but the representation itself), similar results can be proved for a set of primes $\mathfrak{p}$ of Dirichlet density $1$, see \cite{wuthrich2007iwasawa}. 
%These results are unconditional only when the Mordell-Weil rank of the elliptic curve is zero. 
On the other hand, for modular forms of weight $k>2$, Theorem~\ref{main theorem on adjoints} proves that the $\mu$-invariant of the first Tate twist of the adjoint representation vanishes for all but finitely many primes, which is stronger than showing that it vanishes for a set of primes of Dirichlet density $1$. Moreover, the set of primes outside which the $\mu$-invariant is known to vanish, is made explicit in the squarefree level case by the above result. Also, in the weight $2$ case, when $f$ coincides with an abelian variety of $\op{GL}_2$-type, there is no assumption made on the rank of this abelian variety.
\end{remark}

\begin{remark}
Hatley \cite{hatley2016obstruction} has generalized \cite[Theorem 1]{weston2005explicit} to modular newforms of arbitrary level.
%To be precise, when the level of $f$ is \emph{not} squarefree, he has found an explicit set of primes $S_0$ of $\cO_F$ such that $\rho_{f,\mathfrak{p}}$ is unobstructed if $\mathfrak{p} \not\in S_0$ (see \cite[Theorem 1]{hatley2016obstruction}).
Thus, combining \cite[Theorem 3.6]{hatley2016obstruction} with Theorem~\ref{theorem 3.6}, one can explicitly describe a finite set of primes of $\cO_F$ outside of which $\mufn{{\op{Sym}^2(\rho_{f,\mathfrak{p}}) \otimes (\det(\rho_{f,\mathfrak{p}}))^{-1}\chi_p}}$ and $\mufn{\op{Ad}\rho_{f,\mathfrak{p}}(1)}$ vanish even when the level of $f$ is not squarefree.
Since the description of this set is tedious, we will not give it here.
We refer the reader to \cite[Theorem 3.6]{hatley2016obstruction} for more details.
\end{remark}

Specializing to the case when $N=1$, $S =\{\infty\}$ and $f$ is the unique normalized cusp form of level $1$ and weight $k \in \{12,16,18,20,22,26\}$,
we get, for every prime $p$, the $p$-adic Galois representation $$\rho_{f,p} : G_{\Q,\{p,\infty\}} \to \op{GL}_2(\mathbb{Z}_{p})$$ attached to $f$.
Let $$\bar\rho_{f,p} : G_{\Q,\{p,\infty\}} \to \op{GL}_2(\mathbb{F}_{p})$$ be the reduction of $\rho$ modulo $p$. 
%Thus it is obtained by composing $\rho_{f,\ell}$ with the natural surjective map $\mathbb{Z}_{\ell} \to \F_{\ell}$ obtained by going modulo $\ell$.

Combining \cite[Theorem 2]{weston2004unobstructed} with {Theorem~\ref{unobstructed implies mu is zero}}, we get:
\begin{theorem}
\label{level 1 theorem}
 Let $f$ be the unique normalized cusp form of level $1$ and weight $k \in \{12,16,18,20,22,26\}$.
 If $p > k+1$ is a prime such that $\bar\rho_{f,p}$ is absolutely irreducible, then
 \begin{enumerate}
    \item the weak Leopoldt conjecture is true for $\op{Ad}\rho_{f,p}$.
    \item {For $r\in \{\op{Sym}^2(\rho_{f,p})(2-k),\op{Ad}\rho_{f,p}(1)\}$, the fine Selmer group associated to $r$ is cotorsion over $\Lambda$ with $\mufn{r}=0$.}
\end{enumerate}
\end{theorem}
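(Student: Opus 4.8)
The plan is to deduce Theorem~\ref{level 1 theorem} directly from the machinery already assembled, namely Theorem~\ref{unobstructed implies mu is zero} together with the known unobstructedness result for level-$1$ cusp forms. First I would recall that for the six weights $k \in \{12,16,18,20,22,26\}$ the space $S_k(\mathrm{SL}_2(\Z))$ is one-dimensional, so $f$ is the unique normalized Hecke eigenform of that weight and level, and $S = \{p,\infty\}$ with $\cO = \Z_p$ since the Hecke eigenvalues are rational. The crucial input is \cite[Theorem 2]{weston2004unobstructed}, which asserts precisely that under the hypothesis $p > k+1$ and $\bar\rho_{f,p}$ absolutely irreducible, the representation $\rho_{f,p} : G_{\Q,\{p,\infty\}} \to \op{GL}_2(\Z_p)$ is unobstructed in the sense of Definition~\ref{def of unobstructed}, i.e. $H^2(\Q_S/\Q, \op{Ad}\bar\rho_{f,p}) = 0$. (One should note that $p$ being odd is automatic here since $p > k+1 \geq 13$.)

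Once unobstructedness is in hand, the two assertions of the theorem are immediate consequences of Theorem~\ref{unobstructed implies mu is zero}. Assertion (1), the weak Leopoldt conjecture for $\op{Ad}\rho_{f,p}$, is exactly part \eqref{p1 of unobstructed implies mu is zero} of that theorem applied to $\rho = \rho_{f,p}$. For assertion (2), part \eqref{p2 of unobstructed implies mu is zero} gives that $\Rfine{\mathbf{A}(\op{Ad}\rho_{f,p}(1))}{\Q_{\op{cyc}}}$ is cotorsion over $\Lambda$ with $\mufn{\op{Ad}\rho_{f,p}(1)} = 0$, handling the case $r = \op{Ad}\rho_{f,p}(1)$. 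For the case $r = \op{Sym}^2(\rho_{f,p})(2-k)$, I would invoke part \eqref{p3 of unobstructed implies mu is zero}: since $\det\rho_{f,p} = \chi_p^{k-1}$ for a level-$1$ weight-$k$ form, we have $\op{Sym}^2(\rho_{f,p}) \otimes (\det\rho_{f,p})^{-1}\chi_p = \op{Sym}^2(\rho_{f,p}) \otimes \chi_p^{-(k-1)}\chi_p = \op{Sym}^2(\rho_{f,p})(2-k)$, so the representation $r$ in the statement coincides exactly with the representation $r$ of part \eqref{p3 of unobstructed implies mu is zero}, and the conclusion $\mufn{r} = 0$ with cotorsionness follows.

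The only genuine verification, and the place where a reader might want a line of justification, is the determinant computation $\det\rho_{f,p} = \chi_p^{k-1}$ and hence the identification of the twist $(\det\rho_{f,p})^{-1}\chi_p$ with $\chi_p^{2-k}$; this is standard for the Deligne representation attached to a level-$1$ weight-$k$ eigenform with trivial nebentypus. There is no real obstacle: the whole argument is an application of previously established results, with the weight-$k$ case of the symmetric-square twist being notationally rewritten using the explicit determinant. I would therefore simply write: ``This follows by combining \cite[Theorem 2]{weston2004unobstructed} with Theorem~\ref{unobstructed implies mu is zero}, noting that $\det\rho_{f,p} = \chi_p^{k-1}$ so that $\op{Sym}^2(\rho_{f,p}) \otimes (\det\rho_{f,p})^{-1}\chi_p = \op{Sym}^2(\rho_{f,p})(2-k)$.''
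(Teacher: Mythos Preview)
Your proposal is correct and matches the paper's own proof essentially verbatim: invoke \cite[Theorem 2]{weston2004unobstructed} to get unobstructedness of $\rho_{f,p}$, apply Theorem~\ref{unobstructed implies mu is zero}, and use $\det\rho_{f,p}=\chi_p^{k-1}$ to identify $\op{Sym}^2(\rho_{f,p})\otimes(\det\rho_{f,p})^{-1}\chi_p$ with $\op{Sym}^2(\rho_{f,p})(2-k)$. The paper additionally points to the proof of \cite[Theorem 5.6]{weston2004unobstructed} for the precise location of the unobstructedness statement, but otherwise there is no difference.
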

\begin{proof}
In the setting of the theorem above, Weston obtains \cite[Theorem 2]{weston2004unobstructed} by proving that $\rho_{f,p}$ is unobstructed i.e. $H^2(\mathbb{Q}_{\{p,\infty\}}/\mathbb{Q}, \op{Ad}(\bar\rho_{f,p}))=0$ (see the proof of \cite[Theorem 5.6]{weston2004unobstructed} for more details).
Combining this with Theorem~\ref{unobstructed implies mu is zero} proves the theorem.
Note that, in the cases at hand, $\det(\rho_{f,p}) =\chi_p^{k-1}$.
Hence, we have $\op{Sym}^2(\rho_{f,\mathfrak{p}}) \otimes (\det(\rho_{f,\mathfrak{p}}))^{-1} \chi_p = \op{Sym}^2(\rho_{f,p})(2-k)$.
\end{proof}
If $f$ is one of the newforms considered in Theorem~\ref{level 1 theorem}, then the set of primes $p$ such that $\bar\rho_{f,p}$ is \emph{not} absolutely irreducible is given in \cite[Section 5.4]{weston2004unobstructed}.
In particular if $f = \Delta$, then the hypotheses of Theorem~\ref{level 1 theorem} are satisfied by primes $p \geq 17$ and $p \neq 691$.

Weston's work on unobstructedness of modular Galois representations has been generalized in several directions by various authors. 
In the setting of Hilbert modular forms, such results have been obtained by Gamzon \cite{gamzon2016unobstructed}.
Using his main result, we get:
\begin{theorem}\label{hilbert-thm}
 Let $K$ be a totally real field, $f$ be a Hilbert modular newform over $K$ and $S$ be a finite set of primes of $K$ containing all prime divisors of the level of $f$ and all archimedean primes of $K$. 
 Let $F$ be the number field generated by its Hecke eigenvalues and let $\cO_F$ be the ring of integers of $F$.
 For a prime $\mathfrak{p}$ of $\cO_F$, let $p$ be the rational prime lying below $\mathfrak{p}$, $S_{\mathfrak{p}}$ be $S \cup \{\text{primes of } K \text{ lying above } p\}$ and $$\rho_{f,\mathfrak{p}}: G_{K,S_{\mathfrak{p}}} \to \op{GL}_2(\cO_{F,\mathfrak{p}})$$ be the corresponding Galois representation attached to $f$.
 Suppose the following hypotheses hold:
 \begin{enumerate}
     \item $f$ has no CM,
     \item $f$ is not a twist of a base change of a Hilbert newform over a proper subfield $E$ of $K$,
     \item All weights of $f$ are greater than $2$.
 \end{enumerate}
  Then, for all but finitely many primes $\mathfrak{p}$ of $\cO_F$, the following assertions hold:
\begin{enumerate}[(a)]
    \item the weak Leopoldt conjecture is true for $\op{Ad}^0\rho_{f,\mathfrak{p}}$.
    \item {The fine Selmer group associated to $\op{Sym}^2(\rho_{f,\mathfrak{p}}) \otimes (\det(\rho_{f,\mathfrak{p}}))^{-1} \chi_p$ is cotorsion over $\Lambda$ with $\mufn{\op{Sym}^2(\rho_{f,\mathfrak{p}}) \otimes (\det(\rho_{f,\mathfrak{p}}))^{-1} \chi_p}=0$}.
\end{enumerate}
\end{theorem}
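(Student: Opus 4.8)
The plan is to reduce the statement to Gamzon's unobstructedness theorem \cite{gamzon2016unobstructed} for Galois representations attached to Hilbert modular forms, and then feed the resulting $H^2$-vanishing into Theorem~\ref{theorem 3.6}. First I would record that, for all but finitely many primes $\mathfrak{p}$ of $\cO_F$, the residual representation $\bar\rho_{f,\mathfrak{p}}$ is absolutely irreducible, so the Galois-stable lattice is unique and $\rho_{f,\mathfrak{p}}$ is unambiguously defined; after discarding these primes we may assume this throughout. Under hypotheses (1)--(3), Gamzon's main theorem asserts that for all but finitely many $\mathfrak{p}$ the deformation problem of $\bar\rho_{f,\mathfrak{p}}$ (unramified outside $S_{\mathfrak{p}}$) is unobstructed in the appropriate sense, i.e. $H^2(K_{S_{\mathfrak{p}}}/K, \op{Ad}^0\bar\rho_{f,\mathfrak{p}})=0$. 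We enlarge the finite exceptional set to incorporate Gamzon's.

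Next I would identify the relevant representation. Put $r:=\op{Sym}^2(\rho_{f,\mathfrak{p}}) \otimes (\det\rho_{f,\mathfrak{p}})^{-1}\chi_p$. Exactly as in the proof of Theorem~\ref{unobstructed implies mu is zero}\eqref{p3 of unobstructed implies mu is zero}, one has $r^*=\op{Ad}^0\rho_{f,\mathfrak{p}}$, and therefore $\mathbf{V}_{\bar{r}^*}=\op{Ad}^0\bar\rho_{f,\mathfrak{p}}$. Now apply Theorem~\ref{theorem 3.6} with the number field taken to be $K$, the finite set of primes taken to be $S_{\mathfrak{p}}$ --- which contains all archimedean primes of $K$ (since $S$ does) and all primes above $p$ by construction --- and the Galois representation taken to be $r$. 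The hypothesis of Theorem~\ref{theorem 3.6} is precisely $H^2(K_{S_{\mathfrak{p}}}/K, \mathbf{V}_{\bar{r}^*})=H^2(K_{S_{\mathfrak{p}}}/K, \op{Ad}^0\bar\rho_{f,\mathfrak{p}})=0$, which holds by the previous paragraph. Its conclusion then yields both assertions at once: the fine Selmer group $\Rfine{\mathbf{A}(r)}{K_{\op{cyc}}}$ is cotorsion over $\Lambda$ with $\mufn{r}=0$ (assertion (b)), and the weak Leopoldt conjecture holds for $r^*=\op{Ad}^0\rho_{f,\mathfrak{p}}$ (assertion (a)).

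The argument is short because Gamzon's theorem does the real work; the points requiring care are bookkeeping ones, and I expect the main obstacle to lie in matching his normalization with ours. Specifically, over a totally real field $K\neq\Q$ the natural unobstructedness statement involves $\op{Ad}^0$ rather than $\op{Ad}$ --- the scalar part $H^2(K_{S_{\mathfrak{p}}}/K,\F)$ is governed by Leopoldt-type phenomena and is not controlled by Gamzon's methods --- which is exactly why the theorem must be phrased for $\op{Sym}^2(\rho_{f,\mathfrak{p}}) \otimes (\det\rho_{f,\mathfrak{p}})^{-1}\chi_p$ and not for $\op{Ad}\rho_{f,\mathfrak{p}}(1)$, whose dual twist is the full adjoint. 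If Gamzon's statement is phrased instead in terms of the relevant deformation ring being a power series ring over $\cO$, one translates via the standard fact (used already in the discussion preceding Proposition~\ref{power series unobstructed}) that a power series ring of the expected dimension forces the $H^2$ of the relevant adjoint module to vanish. One should also confirm that hypotheses (1)--(3) are literally those under which Gamzon's theorem applies and that his exceptional set is genuinely finite; no new number-theoretic input beyond \cite{gamzon2016unobstructed} and Theorem~\ref{theorem 3.6} is needed.
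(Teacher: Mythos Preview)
Your proposal is correct and follows essentially the same route as the paper: invoke Gamzon's theorem to obtain $H^2(K_{S_{\mathfrak{p}}}/K,\op{Ad}^0\bar{\rho}_{f,\mathfrak{p}})=0$ for all but finitely many $\mathfrak{p}$, identify $\left(\op{Sym}^2(\rho_{f,\mathfrak{p}}) \otimes (\det\rho_{f,\mathfrak{p}})^{-1}\chi_p\right)^*$ with $\op{Ad}^0\rho_{f,\mathfrak{p}}$, and then apply Theorem~\ref{theorem 3.6}. Your additional remarks on absolute irreducibility and on why the statement concerns $\op{Ad}^0$ rather than the full adjoint are accurate but not needed for the argument; the paper's proof is the same two-line reduction.
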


\begin{proof}
Note that, under the hypotheses of the theorem above, \cite[Theorem 1.1]{gamzon2016unobstructed} implies that $H^2(K_{S_{\mathfrak{p}}}/K,\op{Ad}^0\bar{\rho}_{f,\mathfrak{p}}) =0$ for all but finitely many primes $\mathfrak{p}$ of $\cO_F$. {We note here that $\left(\op{Sym}^2(\rho_{f,\mathfrak{p}}) \otimes (\det(\rho_{f,\mathfrak{p}}))^{-1} \chi_p\right)^*$} is identified with $\op{Ad}^0\rho_{f,\mathfrak{p}}$.
Theorem~\ref{theorem 3.6} gives the result.
\end{proof}

The analogous problem for $\op{GSp}_4$-representations is studied by Broshi, Mullath, Sorensen and Weston \cite{broshi2020unobstructed}. 
%To be precise, they analyze the vanishing of the `local' part of $H^2$ of the adjoint representation. 
 On the other hand, Guiraud in \cite{guiraud2020unobstructedness} has established a related generalization (in the spirit of Gamzon) in the setting of regular algebraic conjugate self-dual cuspidal (RACSDC) automorphic representations (see \cite[Theorem 1.2]{guiraud2020unobstructedness} for more details).
We expect that Theorem \ref{main theorem on adjoints} can be suitably generalized to such settings.

We end this subsection by proving an analogue of Theorem~\ref{main theorem on adjoints} for the vanishing of $\mu$-invariant of $\op{Ad}^0\rho$, where $\rho$ is the Galois representation associated to an elliptic curve.

\begin{theorem}\label{ell thm}
Let $E$ be an elliptic curve over $\Q$ with squarefree conductor $N$ and let $S$ be the set of primes dividing $N$ and $\infty$. For a prime $p$, let $$\rho_{E,p} : G_{\Q,S_p} \to \op{GL}_2(\Z_p)$$ be the $p$-adic Galois representation attached to $E$.
Then, for infinitely many primes $p$, the following assertions hold:
\begin{enumerate}
    \item the weak Leopoldt conjecture holds for $\op{Ad}^0\rho_{E,p}(1)= \op{Sym}^2(\rho_{E,p})$.
    \item The fine Selmer group associated to $\op{Ad}^0\rho_{E,p}$ is cotorsion over $\Lambda$ with $\mufn{\op{Ad}^0\rho_{E,p}}=0$.
\end{enumerate}
\end{theorem}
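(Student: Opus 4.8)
The plan is to apply Theorem~\ref{theorem 3.6} to the representation $r:=\op{Ad}^0\rho_{E,p}$ (with the relevant set of primes being $S_p$). Because $p$ is odd, the trace form $(X,Y)\mapsto\tr(XY)$ is a nondegenerate $\op{G}_{\Q,S_p}$-invariant pairing on $\op{Ad}^0\rho_{E,p}$, so this representation is self-dual; hence $r^\ast=\op{Ad}^0\rho_{E,p}(1)$, which equals $\op{Sym}^2\rho_{E,p}$ since $\det\rho_{E,p}=\chi_p$. Thus $\mathbf{V}_{\bar r^\ast}=\op{Sym}^2\bar\rho_{E,p}$, and by Theorem~\ref{theorem 3.6} it suffices to exhibit infinitely many primes $p$ for which $H^2(\Q_{S_p}/\Q,\op{Sym}^2\bar\rho_{E,p})=0$: this simultaneously yields the weak Leopoldt conjecture for $\op{Sym}^2\rho_{E,p}=\op{Ad}^0\rho_{E,p}(1)$ and the vanishing of $\mufn{\op{Ad}^0\rho_{E,p}}$, together with $\Lambda$-cotorsionness of the fine Selmer group.

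Since $N$ is squarefree, $E$ is semistable and in particular non-CM, so $\bar\rho_{E,p}$ is surjective for all but finitely many $p$ by Mazur's theorem on mod-$p$ representations of semistable elliptic curves. For such $p$ one has $H^0(\op{G}_{\Q,S_p},\op{Ad}^0\bar\rho_{E,p})=0$, and feeding $M=\op{Sym}^2\bar\rho_{E,p}$ (so $M^\ast=\op{Ad}^0\bar\rho_{E,p}$) into the Poitou--Tate sequence and invoking local Tate duality --- the archimedean term vanishing because $p$ is odd --- shows that $H^2(\Q_{S_p}/\Q,\op{Sym}^2\bar\rho_{E,p})=0$ is \emph{equivalent} to the two conditions: (i) $\Sh^1_{S_p}(\op{Ad}^0\bar\rho_{E,p})=0$, and (ii) $H^0(\Q_v,\op{Ad}^0\bar\rho_{E,p})=0$ for every finite place $v\mid Np$. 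This is the same mechanism as in the proof of Theorem~\ref{th h2 vanishing}; the difference is that here the trivial representation does occur inside $\op{Ad}^0\bar\rho_{E,p}|_{\op{Gal}(\mathbb{C}/\mathbb{R})}$, which is harmless precisely because $p$ is odd.

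Conditions (i) and the ``$v\mid N$'' half of (ii) I would clear out for all but finitely many $p$. For $\ell\mid N$ ($E$ has multiplicative reduction there), $\bar\rho_{E,p}|_{\op{G}_{\Q_\ell}}$ is an extension of an unramified character by its twist by $\bar\chi_p$, and a short computation with the Tate parametrization shows $H^0(\Q_\ell,\op{Ad}^0\bar\rho_{E,p})=0$ whenever $p\nmid v_\ell(j_E)$, a nonzero integer depending only on $E$. For (i): the strict Selmer group $\Sh^1_{S_p}(\op{Ad}^0\bar\rho_{E,p})$ sits inside the minimal (Bloch--Kato) adjoint Selmer group of $E$, whose cardinality is controlled, via Wiles' numerical criterion for the semistable curve $E$, by the congruence number of $E$ and finitely many local factors, all fixed integers independent of $p$; hence it vanishes for all but finitely many $p$.

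The decisive point --- and the reason the conclusion is only ``infinitely many $p$'' --- is condition (ii) at $v=p$. If $E$ is ordinary at $p$ with $\bar\rho_{E,p}|_{\op{G}_{\Q_p}}$ non-split, or supersingular at $p$, then $H^0(\Q_p,\op{Ad}^0\bar\rho_{E,p})=0$; but at ``split ordinary'' primes $\op{Ad}^0\bar\rho_{E,p}$ acquires a $\op{G}_{\Q_p}$-stable line with trivial Galois action, and the argument fails. Since those primes cannot be excluded a priori, I would take the infinite family to be the supersingular primes of $E$: as $E$ is non-CM, Elkies' theorem gives infinitely many such $p$, and at a supersingular $p$ the restriction $\bar\rho_{E,p}|_{\op{G}_{\Q_p}}$ is induced from a level-$2$ fundamental character, so $\bar\rho_{E,p}$ is automatically absolutely irreducible and $H^0(\Q_p,\op{Ad}^0\bar\rho_{E,p})=0$ because Frobenius interchanges the two fundamental characters and therefore acts by $-1$ --- hence without nonzero fixed vectors, $p$ being odd --- on the otherwise inertia-invariant ``diagonal'' line of $\op{Ad}^0\bar\rho_{E,p}$. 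Discarding the finitely many supersingular $p$ that are bad for any of the reasons above, each remaining supersingular $p$ satisfies (i) and (ii), so $H^2(\Q_{S_p}/\Q,\op{Sym}^2\bar\rho_{E,p})=0$, and Theorem~\ref{theorem 3.6} completes the proof. The hard part is the uniform-in-$p$ finiteness bound on the adjoint Selmer group needed for (i), which rests on the modularity and numerical-criterion package for semistable elliptic curves; the structural obstruction at split ordinary primes is what prevents upgrading ``infinitely many $p$'' to ``all but finitely many''.
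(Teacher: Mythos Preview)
Your argument follows essentially the same route as the paper's: reduce via Theorem~\ref{theorem 3.6} to the vanishing of $H^2(\Q_{S_p}/\Q,\op{Ad}^0\bar\rho_{E,p}(1))$, use Poitou--Tate to split this into the vanishing of $\Sh^1_{S_p}(\op{Ad}^0\bar\rho_{E,p})$ and of the local invariants $H^0(G_v,\op{Ad}^0\bar\rho_{E,p})$ for $v\mid Np$, control the $\Sh$-term for all but finitely many $p$ by the congruence ideal of $f_E$ (the paper invokes \cite{dfg} and \cite{weston2005explicit}; your appeal to Wiles' numerical criterion is the same input), and then use Elkies' theorem to supply infinitely many supersingular $p$ at which $\bar\rho_{E,p}|_{G_p}$ is absolutely irreducible, forcing $H^0(G_p,\op{Ad}^0\bar\rho_{E,p})=0$.

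The one substantive difference is your treatment of the primes $\ell\mid N$: you argue directly from the Tate parametrization, whereas the paper shows (via \cite[Lemma~11]{weston2005explicit}) that $H^0(G_\ell,\op{Ad}^0\bar\rho_{E,p})\neq 0$ forces $\bar\rho_{E,p}$ to be unramified at $\ell$ and then invokes level-lowering to conclude that $p$ is a congruence prime. Your direct argument is more elementary, but as stated it has a small gap: the condition $p\nmid v_\ell(j_E)$ alone does not suffice, since when $\ell\equiv 1\pmod p$ the two diagonal characters of $\bar\rho_{E,p}|_{G_\ell}$ coincide and the nilpotent upper-triangular matrix gives a nonzero element of $H^0(G_\ell,\op{Ad}^0\bar\rho_{E,p})$ even though the extension is non-split. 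You should therefore also exclude the finitely many $p$ dividing $\prod_{\ell\mid N}(\ell-1)$; with that correction your argument goes through.
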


\begin{proof}
Recall that $(\op{Ad}^0\rho_{E,p})^* = \op{Ad}^0\rho_{E,p}(1)$.
So, by Theorem~\ref{theorem 3.6}, it suffices to prove that for infinitely many primes p, we have $H^2(\Q_{S_p}/\Q, \op{Ad}^0\bar{\rho}_{E,p}(1))=0$.
Using Poitou-Tate duality, we get the following exact sequence (see \cite[Proposition 10]{Washington}):
\begin{multline}0 \to H^0(\Q_{S_p}/\Q,\op{Ad}^0\bar\rho_{E,p}) \to \oplus_{\ell \in S} H^0(G_{\ell},\op{Ad}^0\bar\rho_{E,p}) \oplus H^0(G_p,\op{Ad}^0\bar\rho_{E,p}) \to \\ H^2(\Q_{S_p}/\Q, \op{Ad}^0\bar{\rho}_{E,p}(1))^* \to \Sh^1_{S_p}(\op{Ad}^0\bar\rho_{E,p}) \to 0.
\end{multline}
Hence, we conclude that $H^2(\Q_{S_p}/\Q, \op{Ad}^0\bar{\rho}_{E,p}(1))=0$ if the following conditions hold:
%(see proof of \cite[Lemma 6]{weston2005explicit} for more details):
\begin{enumerate}
    \item $H^0(G_{\ell}, {\op{Ad}^0\bar{\rho}_{E,p}})=0$ for all $\ell \in S$,
    \item $H^0(G_p, {\op{Ad}^0\bar{\rho}_{E,p}})=0$,
    \item $\Sh^1_{S_p}(\op{Ad}^0\bar{\rho}_{E,p})=0$.
\end{enumerate} 

Suppose $\bar{\rho}_{E,p}$ is absolutely irreducible.
Hence, it follows that $H^0(\Q_{S_p}/\Q,\op{Ad}^0\bar{\rho}_{E,p})=0$.
Note that, in \cite{weston2005explicit}, Weston defines a suitable Selmer group $H^1_{\emptyset}(G_{\Q},\mathbf{A}(\op{Ad}^0\rho_{E,p}))$ of $\mathbf{A}(\op{Ad}^0\rho_{E,p})$ (see \cite[p. 204]{weston2005explicit}).
This Selmer group is also extensively studied in \cite{dfg}.
Since $H^0(\Q_{S_p}/\Q,\op{Ad}^0\bar{\rho}_{E,p})=0$, we get, using the proof of \cite[Lemma 7]{weston2005explicit} and \cite[Equation 2.3]{weston2005explicit}, that $\Sh^1_{S_p}(\op{Ad}^0\bar{\rho}_{E,p})$ is a subgroup of $H^1_{\emptyset}(G_{\Q},\mathbf{A}(\op{Ad}^0\rho_{E,p}))$.
Let $f_E$ be the weight $2$ modular form associated to $E$.
From \cite[Theorem 3.7]{dfg}, we get that the length of  $H^1_{\emptyset}(G_{\Q},\mathbf{A}(\op{Ad}^0\rho_{E,p}))$ is the $p$-valuation of the congruence ideal $\eta_{f_E}^{\emptyset}$ defined in \cite[Section 1.7.3]{dfg}. As $\eta_{f_E}^{\emptyset}$ is an ideal of $\mathbb{Z}$, we conclude that $H^1_{\emptyset}(G_{\Q},\mathbf{A}(\op{Ad}^0\rho_{E,p})) =0$ for all but finitely many primes $p$ (see the proofs of \cite[Theorem 5.4, Theorem 5.6]{weston2004unobstructed} for more details). 
Note that $\bar{\rho}_{E,p}$ is absolutely irreducible for all but finitely many primes $p$.
Hence, we conclude that $\Sh^1_{S_p}(\op{Ad}^0\bar{\rho}_{E,p})=0$ for all but finitely many primes $p$.

Now $E$ is a rational elliptic curve of conductor $N$. So it follows, from the modularity of rational elliptic curves, that there exists a newform $f_E$ of level $\Gamma_0(N)$ and weight $2$ with rational Fourier coefficients such that $\rho_{E,p} = \rho_{f_E,p}$ for all primes $p$.

Now, in addition to irreducibility of $\bar{\rho}_{E,p}$, suppose $p > N^2$ and $H^0(G_{\ell}, \op{Ad}^0\bar{\rho}_{E,p}) \neq 0$ for some $\ell \in S$.
As $N$ is squarefree and $f_E$ has trivial nebentypus, it follows that $\bar{\rho}_{E,p}$ is reducible, semi-simple and hence, unramified at $\ell$ (see the proof of \cite[Lemma 11]{weston2005explicit} for more details).
%This means that $\bar{\rho}_{E,p}$ is unramified at $\ell$.
So, we conclude, from \cite[(B) of p. 221]{edixhoven}, that $p$ is a congruence prime of level dividing $\dfrac{N}{\ell}$ (see Definition~\ref{congdef} for the definition of congruence primes).
Since $\bar{\rho}_{E,p}$ is absolutely irreducible for all but finitely many primes $p$ and there are only finitely many congruence primes, we conclude that for all but finitely many primes $p$, $H^0(G_{\ell}, \op{Ad}^0\bar{\rho}_{E,p})=0$ for all $\ell \in S$.

Note that if $p$ is a supersingular prime of $E$, then the restriction of $\bar{\rho}_{E,p}$ to $G_p$ is absolutely irreducible and hence, $H^0(G_p, \op{Ad}^0\bar{\rho}_{E,p})=0$.
A celebrated theorem of Elkies (\cite[Theorem 2]{elkies}) implies that $E$ has infinitely many supersingular primes. 
Combining this with all the analysis given above proves the theorem.
\end{proof}

\subsection{Artin representations}
Note that, in Theorem~\ref{weston theorem}, the weight of the modular newforms is always assumed to be greater than $1$.
Therefore, it does not shed any light on the unobstructedness of Artin representations as they arise only in the setting of weight $1$ modular forms.
Moreover, an Artin representation can be considered as a $p$-adic representation for every prime $p$ (as we will see below). 
So it is easy to formulate the question studied by Weston \cite{weston2004unobstructed} for Artin representations of arbitrary dimensions.
This question is studied by B\"{o}ckle, Guiraud, Kalyanswamy and Khare in \cite{khare2018leopoldt} in more generality.
In particular, they focus on the vanishing of $H^2$ for arbitrary Artin representations rather than restricting to the case of adjoint of Artin representations.
We will now briefly describe their setup following \cite[Section 6.1]{khare2018leopoldt}.

Let $\rho: \op{Gal}(\bar{\Q}/\Q) \to \GL_n(\mathbb{C})$ be a non-trivial irreducible Artin representation (which is not necessarily odd).
So it is a continuous, irreducible representation with finite non-trivial image.
Let $E$ be the finite extension of $\Q$ fixed by $\ker(\rho)$. Let $G = \op{Gal}(E/\Q)$ and $h_E$ be the class number of $E$.
Let $S$ be the set of primes of $\Q$ consisting of primes which are ramified in $E$ and $\infty$.
%Let $\delta(\rho) = \dim(H^0(\op{Gal}(\mathbb{C}/\mathbb{R}),\rho))$.

As $\op{Im}(\rho)$ is finite, there exists a number field $F$ such that, under a suitable basis, $\op{Im}(\rho) \subset \op{GL}_n(F)$.
Let $\cO_F$ be the ring of integers of $F$ and $\mathfrak{p}$ be a prime of $\cO_F$.
Let $F_{\mathfrak{p}}$ be the completion of $F$ at $\mathfrak{p}$, $\cO$ be its ring of integers and $\F$ be its residue field.
Let $p$ be the rational prime lying below $\mathfrak{p}$ and let $S_{\mathfrak{p}} := S \cup \{p\}$.
Then, under a suitable basis, the representation $\rho_{\mathfrak{p}} : G_{\Q,S_{\mathfrak{p}}} \to \op{GL}_n(F_{\mathfrak{p}})$, obtained by composing $\rho : G_{\Q,S_{\mathfrak{p}}} \to \op{GL}_n(F)$ with the map $F \to F_{\mathfrak{p}}$ induced by completion, takes values in $\op{GL}_n(\cO)$.

%Suppose $p$ satisfies the hypotheses (Hyp 1)-(Hyp 5) of \cite[Section 6.1]{khare2018leopoldt}.
Recall that we have fixed $p$ to be an odd prime. Moreover, assume that $p \nmid |G|.h_E$, $p$ is unramified in $E$ and $p \geq \text{max} \{\ell^{|G|} \mid \ell \in S\}$.
Then, the residual representation $\bar{\rho}_{\mathfrak{p}} : G_{\Q,S_{\mathfrak{p}}} \to \GL_n(\F)$ obtained by reducing $\rho_{\mathfrak{p}} :G_{\Q,S_{\mathfrak{p}}} \to \op{GL}_n(\cO)$ modulo the maximal ideal of $\cO$ is absolutely irreducible (see \cite[Section 6.1]{khare2018leopoldt} for more details).
Thus it is natural to ask whether $H^2(\Q_{S_{\mathfrak{p}}}/\Q,\bar\rho_{\mathfrak{p}})=0$ for all but finitely many such primes $\mathfrak{p}$.
This question is studied in \cite{khare2018leopoldt}.

To be precise, in the setup described above, they prove:

\begin{theorem}[B\"{o}ckle-Guiraud-Kalyanswamy-Khare]
 Let $\rho : \op{Gal}(\bar{\Q}/\Q) \to \op{GL}_n(\mathbb{C})$ be a non-trivial irreducible Artin representation and let $F$ be a number field over which $\rho$ is defined as above. Let $\cO_F$ be the ring of integers of $F$.
Let $S$ be the set of primes of $\Q$ consisting of primes at which $\rho$ is ramified and $\infty$.
If $H^0(\op{Gal}(\mathbb{C}/\mathbb{R}),\rho)=0$, then $H^2(\Q_{S_{\mathfrak{p}}}/\Q,\bar\rho_{\mathfrak{p}}) = 0$ for all but finitely many primes $\mathfrak{p}$ of $\cO_F$.
   % \item If $ \delta(\rho) = \dim(\rho)$ and if Heuristic $6.5$ of \cite{khare2018leopoldt} holds, then there are approximately $\log\log X$ primes $p \leq X$ for which $H^2(\Q_{S_{\mathfrak{p}}}/\Q,\bar\rho_{\mathfrak{p}}) \neq 0$.
\end{theorem}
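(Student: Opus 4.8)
This is a theorem of \cite{khare2018leopoldt}, and the approach I would take reprises their argument. The plan is to prove that $H^2(\Q_{S_\mathfrak{p}}/\Q,\bar\rho_\mathfrak{p})=0$ for every $\mathfrak{p}$ whose residue characteristic $p$ obeys the running hypotheses of this subsection ($p\nmid|G|\cdot h_E$, $p$ unramified in $E$, and $p\geq\max\{\ell^{|G|}:\ell\in S\}$); all but finitely many $\mathfrak{p}$ are of this kind, and for them $\bar\rho_\mathfrak{p}$ is absolutely irreducible and factors through $G:=\op{Gal}(E/\Q)$. First I would substitute $M=\bar\rho_\mathfrak{p}$ into the Poitou--Tate nine-term exact sequence; its last four terms read
\[H^1(\Q_{S_\mathfrak{p}}/\Q,\bar\rho_\mathfrak{p}^*)^{\vee}\to H^2(\Q_{S_\mathfrak{p}}/\Q,\bar\rho_\mathfrak{p})\to\bigoplus_{v\in S_\mathfrak{p}}H^2(\Q_v,\bar\rho_\mathfrak{p})\to H^0(\Q_{S_\mathfrak{p}}/\Q,\bar\rho_\mathfrak{p}^*)^{\vee}\to 0,\]
the second map having kernel $\Sh^2_{S_\mathfrak{p}}(\bar\rho_\mathfrak{p})\cong\Sh^1_{S_\mathfrak{p}}(\bar\rho_\mathfrak{p}^*)^{\vee}$ by global duality of $\Sh$-groups. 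Since $\bar\rho_\mathfrak{p}$ is irreducible with image of order $|G|$ while $\bar\chi_p$ has image of order $p-1>|G|$, one has $\bar\rho_\mathfrak{p}\not\simeq\bar\chi_p$, hence $H^0(\Q_{S_\mathfrak{p}}/\Q,\bar\rho_\mathfrak{p}^*)=\op{Hom}_{\op{G}_{\Q,S_\mathfrak{p}}}(\bar\rho_\mathfrak{p},\F(\bar\chi_p))=0$; the last arrow above is then zero and
\[\#H^2(\Q_{S_\mathfrak{p}}/\Q,\bar\rho_\mathfrak{p})=\#\Sh^1_{S_\mathfrak{p}}(\bar\rho_\mathfrak{p}^*)\cdot\prod_{v\in S_\mathfrak{p}}\#H^2(\Q_v,\bar\rho_\mathfrak{p}).\]
Thus it is enough to prove that (a) $H^2(\Q_v,\bar\rho_\mathfrak{p})=0$ for every $v\in S_\mathfrak{p}$, and (b) $\Sh^1_{S_\mathfrak{p}}(\bar\rho_\mathfrak{p}^*)=0$.

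For (a) I would argue place by place. The place $v=\infty$ contributes nothing because $p$ is odd, so $|\op{Gal}(\mathbb{C}/\mathbb{R})|=2$ is invertible in $\bar\rho_\mathfrak{p}$. For a finite $v\in S_\mathfrak{p}$, local Tate duality gives $H^2(\Q_v,\bar\rho_\mathfrak{p})\cong H^0(\Q_v,\bar\rho_\mathfrak{p}^*)^{\vee}$, so I must exclude a nonzero $G_v$-invariant of $\bar\rho_\mathfrak{p}^{\vee}\otimes\bar\chi_p$. At $v=p$: $p$ unramified in $E$ makes $\bar\rho_\mathfrak{p}|_{I_p}$ trivial while $\bar\chi_p|_{I_p}$ is nontrivial, so there is not even an $I_p$-invariant. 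At $v=\ell\neq p$ in $S$: such an invariant would exhibit $\bar\chi_p|_{G_\ell}$ as a subquotient of $\bar\rho_\mathfrak{p}|_{G_\ell}$; passing to $I_\ell$-coinvariants, $\op{Frob}_\ell$ would act on $(\bar\rho_\mathfrak{p})_{I_\ell}$ with an eigenvalue congruent to $\ell$ modulo $\mathfrak{p}$, but such eigenvalues are reductions of $|G|$-th roots of unity, forcing $p\mid\ell^{|G|}-1$ and contradicting $p\geq\ell^{|G|}$.

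Step (b) is the heart of the matter. Put $N=E(\mu_p)$ and $\Delta=\op{Gal}(N/\Q)$. Since $p$ is unramified in $E$, $E\cap\Q(\mu_p)$ is unramified everywhere over $\Q$ and hence equals $\Q$, so $\Delta\simeq G\times\F_p^{\times}$ has order prime to $p$ and $\bar\rho_\mathfrak{p}^*$ is inflated from $\Delta$. Inflation--restriction (with all $\Delta$-cohomology vanishing) identifies $H^1(\Q_{S_\mathfrak{p}}/\Q,\bar\rho_\mathfrak{p}^*)$ with $\op{Hom}_\Delta(\op{G}_{N,S_\mathfrak{p}},\bar\rho_\mathfrak{p}^*)$, and a class in $\Sh^1_{S_\mathfrak{p}}(\bar\rho_\mathfrak{p}^*)$ yields a $\Delta$-equivariant homomorphism that is unramified everywhere and splits at the primes of $N$ above $S_\mathfrak{p}$; it therefore factors through $\op{Hom}_\Delta\!\big(\big(\op{Cl}(N)/\langle\mathfrak{P}\mid S_\mathfrak{p}\rangle\big)\otimes\F_p,\bar\rho_\mathfrak{p}^*\big)$. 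As $\bar\rho_\mathfrak{p}$ is trivial on $\op{Gal}(N/E)\simeq\F_p^{\times}$ and $\bar\chi_p$ restricts there to the Teichm\"uller character $\omega$, this group is supported on the $\omega$-eigenspace of $\op{Cl}(N)\otimes\F_p$. The ambiguous class number formula for the tame cyclic extension $N/E$, together with $p\nmid h_E$, forces the trivial ($\omega^0$-)eigenspace to vanish; a reflection (Spiegelung) argument then bounds the $\omega$-eigenspace by the $\omega^0$-eigenspace plus a term measuring the $p$-rank of an eigenspace of the unit group $\cO_N^{\times}$, and it is exactly here that the hypothesis $H^0(\op{Gal}(\mathbb{C}/\mathbb{R}),\rho)=0$ enters, controlling the behaviour of units at the archimedean places so as to kill that term. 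Equivalently, this hypothesis makes the archimedean contribution $\dim H^0(\op{Gal}(\mathbb{C}/\mathbb{R}),\bar\rho_\mathfrak{p})$ to the global Euler characteristic vanish, so that $H^2(\Q_{S_\mathfrak{p}}/\Q,\bar\rho_\mathfrak{p})=0$ becomes equivalent to $H^1(\Q_{S_\mathfrak{p}}/\Q,\bar\rho_\mathfrak{p})$ attaining its minimal possible dimension $\dim\bar\rho_\mathfrak{p}$. One concludes that (b) holds for all but finitely many $\mathfrak{p}$.

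The step I expect to be the main obstacle is (b): converting the hypothesis $p\nmid h_E$ about the fixed field $E$ into a vanishing statement for the $\omega$-eigenspace of the class group of the ever-changing cyclotomic field $E(\mu_p)$, uniformly in $p$. This is where the genuine arithmetic input of \cite{khare2018leopoldt} resides --- a blend of genus theory, reflection theorems, and the behaviour of units at infinite places --- whereas step (a) and the reductions of the first paragraph are formal consequences of Poitou--Tate and local duality.
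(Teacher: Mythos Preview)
The paper's own ``proof'' of this statement is a single sentence: it records that the result is part of Proposition~6.6 of \cite{khare2018leopoldt} and cites it without reproducing any argument. So strictly speaking, you have gone well beyond what the paper does; your proposal is not so much a comparison candidate as an attempted exposition of the cited result.

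As a sketch of the B\"ockle--Guiraud--Kalyanswamy--Khare argument, your outline is broadly on target. The reduction via Poitou--Tate to (a) local vanishing and (b) vanishing of $\Sh^1_{S_\mathfrak{p}}(\bar\rho_\mathfrak{p}^*)$ is the right shape, and your treatment of (a) is essentially complete: the archimedean place is harmless since $p$ is odd, the case $v=p$ uses that $\bar\rho_\mathfrak{p}$ is unramified at $p$ while $\bar\chi_p$ is not, and at $\ell\in S$ the bound $p\ge \ell^{|G|}$ rules out $p\mid \ell^{|G|}-1$. For (b), your description --- inflation--restriction down to $N=E(\mu_p)$, factoring through a class-group Hom, isolating the $\omega$-eigenspace, and invoking a reflection/Spiegelung argument together with the archimedean hypothesis to control the unit contribution --- is the correct skeleton, and you are right to flag it as the substantive step. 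That said, this part of your write-up remains a sketch rather than a proof: the precise form of the reflection inequality, how the hypothesis $H^0(\op{Gal}(\mathbb{C}/\mathbb{R}),\rho)=0$ kills the relevant unit eigenspace, and why the resulting bound is uniform in $p$ are all asserted rather than carried out. Since the paper itself is content to cite \cite{khare2018leopoldt}, this is not a defect relative to the paper, but if you intend your text to stand as an independent proof you would need to fill in (b) carefully.
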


\begin{proof}
This is a part of Proposition $6.6$ of \cite{khare2018leopoldt}.
\end{proof}

Combining this theorem with Theorem~\ref{theorem 3.6}, we get:
\begin{theorem}\label{artinthm}
Let $\rho : \op{Gal}(\bar{\Q}/\Q) \to \op{GL}_n(\mathbb{C})$ be a non-trivial irreducible Artin representation and let $F$ be a number field over which $\rho$ is defined as above. Let $\cO_F$ be the ring of integers of $F$.
%Let $S$ be the set of primes of $\Q$ consisting of primes at which $\rho$ is ramified and $\infty$.
Suppose $H^0(\op{Gal}(\mathbb{C}/\mathbb{R}),\rho)=0$.
Then, for all but finitely many primes $\mathfrak{p}$ of $\cO_F$, the following assertions hold:
\begin{enumerate}[(a)]
    \item the weak Leopoldt conjecture is true for $\rho_{\mathfrak{p}}$.
    \item The fine Selmer group $\Rfine{\mathbf{A}(\rho_{\mathfrak{p}}^*)}{\Q_{\op{cyc}}}$ associated to $\rho_{\mathfrak{p}}^*$ is cotorsion over $\Lambda$ with $\mufn{\rho_{\mathfrak{p}}^*}=0$.
\end{enumerate}
\end{theorem}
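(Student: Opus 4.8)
The plan is to deduce this directly by combining the theorem of B\"ockle--Guiraud--Kalyanswamy--Khare quoted just above with the criterion of Theorem~\ref{theorem 3.6}, using that the operation $\sigma \mapsto \sigma^{*} = \sigma^{\wedge}\otimes\chi_p$ is an involution. Concretely, one has $(\sigma^{*})^{*}\simeq\sigma$ and, after reduction, $(\bar\sigma^{*})^{*}\simeq\bar\sigma$. Hence, taking $\sigma := \rho_{\mathfrak{p}}^{*}$ --- which is again an integral Galois representation $\op{G}_{\Q,S_{\mathfrak{p}}}\to\op{GL}_n(\cO)$, since $\rho_{\mathfrak{p}}^{\wedge}$ has values in $\op{GL}_n(\cO)$ and $\chi_p$ in $\cO^{\times}$ --- the hypothesis of Theorem~\ref{theorem 3.6} to be checked is $H^2(\Q_{S_{\mathfrak{p}}}/\Q,\mathbf{V}_{\bar\sigma^{*}}) = H^2(\Q_{S_{\mathfrak{p}}}/\Q,\mathbf{V}_{\bar\rho_{\mathfrak{p}}}) = 0$, and its conclusions read: $\Rfine{\mathbf{A}(\rho_{\mathfrak{p}}^{*})}{\Q_{\op{cyc}}}$ is $\Lambda$-cotorsion with $\mufn{\rho_{\mathfrak{p}}^{*}}=0$, and the weak Leopoldt conjecture holds for $\sigma^{*}=\rho_{\mathfrak{p}}$. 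These are exactly assertions (b) and (a).

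Carrying this out, I would first note that the conditions imposed in the B\"ockle--Guiraud--Kalyanswamy--Khare setup (namely $p\nmid |G|\,h_E$, $p$ unramified in $E$, $p\geq\max\{\ell^{|G|}:\ell\in S\}$, $p$ odd, and $\bar\rho_{\mathfrak{p}}$ absolutely irreducible) exclude only finitely many rational primes $p$, hence only finitely many primes $\mathfrak{p}$ of $\cO_F$; for the remaining $\mathfrak{p}$, the hypothesis $H^0(\op{Gal}(\mathbb{C}/\mathbb{R}),\rho)=0$ together with that theorem gives $H^2(\Q_{S_{\mathfrak{p}}}/\Q,\bar\rho_{\mathfrak{p}})=0$. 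Second, I would observe that $S_{\mathfrak{p}}$ --- the primes ramified in $E$, together with $\infty$ and $p$ --- is a finite set of primes of $\Q$ containing $S_p$ and all archimedean places, so Theorem~\ref{theorem 3.6} applies with $F=\Q$ and this $S$. Third, I would identify $\mathbf{V}_{\bar\rho_{\mathfrak{p}}}$ with the $\F$-space underlying $\overline{(\rho_{\mathfrak{p}}^{*})}^{*}$ and feed the above vanishing into Theorem~\ref{theorem 3.6} applied to $\sigma=\rho_{\mathfrak{p}}^{*}$, which yields (a) and (b) at once. Finally, I would recall from the discussion after Definition~\ref{fineselmerdef} that over $\Q_{\op{cyc}}$ the fine Selmer group, and hence its $\mu$-invariant, is independent of the choice of $S$, so the statement is intrinsic.

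There is no substantive obstacle here: the argument is a formal concatenation of two cited results. The only point that genuinely requires care is the duality bookkeeping --- one must apply Theorem~\ref{theorem 3.6} to $\rho_{\mathfrak{p}}^{*}$, not to $\rho_{\mathfrak{p}}$, precisely because the residual $H^2$ that the B\"ockle--Guiraud--Kalyanswamy--Khare theorem controls is $H^2(\Q_{S_{\mathfrak{p}}}/\Q,\bar\rho_{\mathfrak{p}})$ and not $H^2(\Q_{S_{\mathfrak{p}}}/\Q,\bar\rho_{\mathfrak{p}}^{*})$; with this choice the fine Selmer conclusion lands on $\rho_{\mathfrak{p}}^{*}$ and the weak Leopoldt conclusion on $\rho_{\mathfrak{p}}$, matching the statement. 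If one instead wanted the analogous statement for $\rho_{\mathfrak{p}}$ itself, one would need the vanishing of $H^2$ of $\bar\rho_{\mathfrak{p}}^{*}$, which is a genuinely different --- and not obviously available --- input.
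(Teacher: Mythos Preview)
Your proposal is correct and follows exactly the paper's approach: combine the B\"ockle--Guiraud--Kalyanswamy--Khare theorem with Theorem~\ref{theorem 3.6}. The paper's own proof is a single sentence to this effect; your write-up simply makes explicit the duality bookkeeping (applying Theorem~\ref{theorem 3.6} to $\sigma=\rho_{\mathfrak p}^{*}$ so that the $H^2$ hypothesis becomes $H^2(\Q_{S_{\mathfrak p}}/\Q,\bar\rho_{\mathfrak p})=0$ and the conclusions land on $\rho_{\mathfrak p}^{*}$ and $\rho_{\mathfrak p}$ as required), which the paper leaves implicit.
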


\begin{remark}
Note that, in \cite[Proposition 6.6]{khare2018leopoldt}, it is also proved that if $$0 < \dim(H^0(\op{Gal}(\mathbb{C}/\mathbb{R}),\rho)) < \dim(\rho)$$ and Heuristic $6.5$ of \cite{khare2018leopoldt} holds, then $H^2(\Q_{S_{\mathfrak{p}}}/\Q,\bar\rho_{\mathfrak{p}}) = 0$ for all but finitely many primes $\mathfrak{p}$ of $\cO_F$.
Combining this result with Theorem~\ref{theorem 3.6}, we conclude that $\mufn{\rho_{\mathfrak{p}}^*}=0$ for all but finitely many primes $\mathfrak{p}$ of $\cO_F$ in these cases as well.
\end{remark}

Note that the same question for Artin representations of arbitrary number fields is analyzed in Section $6.2$ of \cite{khare2018leopoldt}.

\subsection{Neat Representations}
\par Let $K$ be a number field, $\F$ be a finite field of characteristic $p$ and $S$ be a finite set of primes of $K$ containing all primes of $K$ dividing $p$ and all archimedean primes.
Let $\bar\rho : G_{K,S} \to \op{GL}_n(\F)$ be a continuous, absolutely irreducible representation.
Let $\cO$ be the ring of integers of a finite extension of $\Q_p$ with residue field $\F$.
Fix a uniformizer $\varpi$ of $\cO$.

Suppose $\bar\rho$ is unobstructed and let $d=\dim_{\F}(H^1(K_S/K,\op{Ad}\bar\rho))$.
Then, from \cite[Proposition 2]{mazur1989deforming}, it follows that the residual representation $\bar{\rho}$ lifts to the universal representation unramified away from $S$
\[\rho^{\op{univ}}: \op{G}_{K,S}\rightarrow \op{GL}_n\left(\cO\llbracket X_1, \cdots, X_d\rrbracket\right),\]which we view as a family of Galois representations. Indeed, given any $d$-tuple $a=(a_1, \cdots,a_d)\in \cO^d$, the $\cO$-valued homomorphism
$\varphi_a:\cO\llbracket X_1, \cdots, X_d\rrbracket\rightarrow \cO$ sending $X_i$ to $\varpi a_i$ gives rise to a Galois representation $\rho_a$ given by the composite
\begin{equation}\label{eqtn-a}
    \rho_a:\op{G}_{K,S}\xrightarrow{\rho^{\op{univ}}} \op{GL}_n\left(\cO\llbracket X_1, \cdots, X_d\rrbracket\right)\xrightarrow{\varphi_a^*} \op{GL}_n(\cO),\end{equation} where the second homomorphism is induced by $\varphi_a$. 
%The representation $\rho_a$ need not arise from a modular form, since the local representation at $p$ need not be \emph{potentially semistable}. 
Since $\bar{\rho}$ is unobstructed, it follows from Theorem \ref{unobstructed implies mu is zero} that the weak Leopoldt conjecture holds for $\op{Ad} \rho_a$, and moreover, $\mufn{\op{Ad} \rho_a(1)}=0$. Thus, we get a family of Galois representations parametrized by $\cO^d$ for which the $\mu$-invariant of the fine Selmer group of the adjoint representation vanishes. 

\par Recall that if $K=\Q$ and $n=2$, then we get that $d=3$ if $\bar\rho$ is odd and $d=1$ if $\bar\rho$ is even (see Proposition~\ref{power series unobstructed}).

\par In \cite{mazur1989deforming}, Mazur introduced the notion of \emph{neatness} and constructed odd Galois representations of  $\op{Gal}(\bar{\Q}/\Q)$ that are unobstructed and unramified away from a single prime $p$. B\"{o}ckle \cite{bockle1999even} gave a similar construction for \emph{even}, unobstructed Galois representations of $\op{Gal}(\bar{\Q}/\Q)$. We relate the notion of neatness to vanishing of the $\mu$-invariant of fine Selmer groups. 
%Let $\F$ be a finite field of characteristic $p$ and $F$ be a number field. Let $S$ be the set of primes of $F$ that lie above $p$. 

\par Fix an absolutely irreducible residual representation $\bar{\rho}:\op{G}_{K,S}\rightarrow \op{GL}_n(\F)$ as above, let $L/K$ be the splitting field cut out by $\bar{\rho}$. Denote by $S'$ the set of primes of $L$ that lie above the non-archimedean primes of $S$ and set $G$ to be the Galois group $G=\op{Gal}(L/K)$.
For a prime $w \in S'$, denote by $L_w$ the completion of $L$ at $w$.
Denote the subgroups of $L^\times$ and $L_w^\times$ consisting of of $p$-th roots of unity by $\mu_p(L)$ and $\mu_p(L_w)$, respectively.
Denote the ring of integers of $L$ and $L_w$ by $\cO_L$ and $\cO_{L_w}$, respectively.

\par An $\F_p[G]$-module $M$ is said to be \emph{coprime} to $\op{Ad}\bar{\rho}$ if the tensor product $M\otimes_{\F_p} (\op{Ad}\bar{\rho})^{\vee}$ does not contain the identity representation. 
\begin{definition}\label{mdef}
 %Associated to $\bar{\rho}$ are the three Galois modules $M_1$, $M_2$ and $M_3$ defined as follows.
We define three Galois modules $M_1$, $M_2$ and $M_3$ associated to $L$ as follows:
\begin{enumerate}
    \item $M_1:=\op{coker}\left\{\mu_p(L)\rightarrow \bigoplus_{w\in S'} \mu_p(L_w)\right\}$;
    \item $M_2:=\op{ker}\left\{\cO_L^*/(\cO_L^*)^p\rightarrow \bigoplus_{w\in S'}\cO_{L_w}^*/(\cO_{L_w}^*)^p\right\}$;
    \item $M_3:=\op{Cl}(L)\otimes \F_p$, where $\op{Cl}(L)$ is the class group of $L$.
\end{enumerate}
\end{definition}
\begin{definition}[Mazur]
With notation as above, $\bar{\rho}$ is said to be \emph{neat} if $p$ does not divide the cardinality of $\op{Im}(\bar\rho)$ and $M_1$, $M_2$ and $M_3$ are coprime to $\op{Ad} \bar{\rho}$.
\end{definition}
See \cite[Section 1.12]{mazur1989deforming} for more details.
\begin{theorem}[Mazur]\label{neat implies unobstructed} Let $\F$ be a finite field of characteristic $p$, $K$ be a number field and $S$ be a set of primes of $K$ containing all primes above $p$ and all archimedean primes. Let $\bar{\rho}:\op{G}_{K,S}\rightarrow \op{GL}_n(\F)$ be a \emph{neat} residual representation. Then $\bar{\rho}$ is \emph{unobstructed}.  
\end{theorem}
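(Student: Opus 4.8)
The plan is to show that neatness of $\bar\rho$ forces the vanishing of $H^2(K_S/K,\op{Ad}\bar\rho)$, which is precisely the definition of unobstructedness. First I would invoke Poitou--Tate duality for the finite $\op{G}_{K,S}$-module $\op{Ad}\bar\rho$. Since $p$ is odd and $K$ has no real places contributing obstruction in odd characteristic (or more precisely, after noting that the local archimedean terms vanish because $p\neq 2$), the global duality exact sequence identifies $H^2(K_S/K,\op{Ad}\bar\rho)$ with an extension of $\Sh^1_S\big((\op{Ad}\bar\rho)^\wedge(1)\big)^\vee$ by the cokernel of the localization map $\bigoplus_{v\in S}H^0(K_v,(\op{Ad}\bar\rho)^\wedge(1))^\vee \to H^0(K_S/K,(\op{Ad}\bar\rho)^\wedge(1))^\vee$; here $(\op{Ad}\bar\rho)^\wedge$ is the Cartier dual, and one uses the self-duality-up-to-twist of $\op{Ad}$ (the trace pairing identifies $\op{Ad}\bar\rho$ with its own dual). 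Thus it suffices to prove the vanishing of $\Sh^1_S$ of the twisted adjoint together with the surjectivity of the relevant localization map on $H^0$'s.

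Next, I would translate each of these two vanishing statements into a statement about the $\F_p[G]$-modules $M_1,M_2,M_3$ of Definition \ref{mdef}. The point is that over the splitting field $L$ (where $\bar\rho$ and hence $\op{Ad}\bar\rho$ becomes trivial as a Galois module), Galois cohomology of $\op{Ad}\bar\rho$ is computed by group cohomology of $G=\op{Gal}(L/K)$ with coefficients in $\op{Ad}\bar\rho\otimes(\text{cohomology of }G_L)$, via the Hochschild--Serre spectral sequence applied to $K\subset L\subset K_S$. Because $p\nmid|\op{Im}(\bar\rho)|=|G|$, the higher cohomology of $G$ vanishes, so the spectral sequence collapses and everything reduces to $G$-invariants. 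Kummer theory for $L$ (and for the completions $L_w$) then expresses the arithmetic of $L$ in degree $1$ in terms of $\mu_p(L)$, $\cO_L^\times/(\cO_L^\times)^p$, and $\op{Cl}(L)\otimes\F_p$ — exactly the modules $M_1$, $M_2$, $M_3$. Concretely, $\Sh^1_S$ of the twisted adjoint will be shown to be a subquotient of $\Hom_{\F_p[G]}(M_i,\op{Ad}\bar\rho)$ for appropriate $i$, and the failure of surjectivity of the $H^0$-localization map will likewise be controlled by such Hom-spaces (this is where $M_1$, the cokernel of the local-global map on $p$-th roots of unity, enters). By definition, $M_i$ coprime to $\op{Ad}\bar\rho$ means $M_i\otimes_{\F_p}(\op{Ad}\bar\rho)^\vee$ contains no trivial summand, i.e. $\Hom_{\F_p[G]}(M_i,\op{Ad}\bar\rho)=0$ (using semisimplicity of $\F_p[G]$-modules since $p\nmid|G|$, and the self-duality of $\op{Ad}\bar\rho$). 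So each obstruction term vanishes.

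The main obstacle I anticipate is the bookkeeping in the second step: correctly matching up which of $M_1$, $M_2$, $M_3$ governs which piece of $H^2$, keeping careful track of the Tate twist by $\chi_p$ (equivalently $\mu_p$) and of the global-versus-local Kummer sequences, and ensuring that the archimedean and wildly-ramified local terms genuinely drop out under the hypothesis $p\nmid|G|\cdot h_L$ combined with $p$ odd. This is essentially a repackaging of \cite[Section 1.12]{mazur1989deforming}, where Mazur shows that neatness is engineered precisely so that the Poitou--Tate obstruction terms for $\op{Ad}\bar\rho$ all vanish; the cleanest route is to follow that argument, and I would cite it for the detailed cohomological computations rather than reproduce them. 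Assembling the pieces: all three obstruction contributions to $H^2(K_S/K,\op{Ad}\bar\rho)$ vanish, hence $H^2(K_S/K,\op{Ad}\bar\rho)=0$, i.e. $\bar\rho$ is unobstructed.
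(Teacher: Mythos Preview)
Your proposal is correct and aligns with the paper's approach: the paper's entire proof is the single sentence ``The reader is referred to the proof of \cite[Proposition 7]{mazur1989deforming},'' and you likewise defer to Mazur for the detailed computations while helpfully sketching the strategy (Poitou--Tate plus Hochschild--Serre over the splitting field $L$, Kummer theory identifying the obstruction with $G$-equivariant homs out of $M_1,M_2,M_3$, and coprimality killing those). One small slip: in your ``main obstacle'' paragraph you write ``$p\nmid|G|\cdot h_L$,'' but neatness does not require $p\nmid h_L$---the hypothesis is only that $M_3=\op{Cl}(L)\otimes\F_p$ is coprime to $\op{Ad}\bar\rho$, which is weaker.
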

\begin{proof}
The reader is referred to the proof of \cite[Proposition 7]{mazur1989deforming}.
\end{proof}

\begin{corollary}\label{corollary neat}
Let $\bar{\rho}$ be a neat Galois representation as in Theorem \ref{neat implies unobstructed}, let $d=\dim_{\F}(H^1(K_S/K,\op{Ad}\bar\rho))$ and let $\rho_a:\op{G}_{K, S}\rightarrow \op{GL}_n(\cO)$ be the $\cO$-deformation of $\bar\rho$ associated to $a=(a_1, \cdots, a_d)\in \cO^d$ as above (see \eqref{eqtn-a} for the definition of $\rho_a$). Then, the following assertions hold:
\begin{enumerate}
    \item the weak Leopoldt conjecture is true for $\op{Ad}\rho_a$,
    \item the fine Selmer group $\Rfine{\mathbf{A}(\op{Ad}\rho_a(1))}{K_{\op{cyc}}}$ associated to $\op{Ad}\rho_a(1)$ is cotorsion over $\Lambda$ with $\mufn{\op{Ad}\rho_a(1)}=0$. If $n=2$, then the same assertion holds for $\op{Sym}^2(\rho_a) \otimes (\det(\rho_a))^{-1}\chi_p$.
\end{enumerate}
\end{corollary}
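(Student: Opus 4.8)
The plan is to deduce the corollary formally from the two results already established: neatness implies unobstructedness (Theorem~\ref{neat implies unobstructed}), and unobstructedness of a Galois representation implies the vanishing of the $\mu$-invariant of the fine Selmer group of the first Tate twist of its adjoint (Theorem~\ref{unobstructed implies mu is zero}). First I would record that, since $\bar\rho$ is neat, Theorem~\ref{neat implies unobstructed} gives $H^2(K_S/K,\op{Ad}\bar\rho)=0$, so $\bar\rho$ is unobstructed.

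The one substantive step is to check that each characteristic-zero member $\rho_a$ of the family has residual representation $\bar\rho$, so that $\rho_a$ is itself unobstructed in the sense of Definition~\ref{def of unobstructed}. This is immediate from the construction \eqref{eqtn-a}: $\rho_a$ is the composite of $\rho^{\op{univ}}$ with the map on $\op{GL}_n$ induced by $\varphi_a\colon X_i\mapsto \varpi a_i$, and reduction modulo $\varpi$ sends each $\varpi a_i$ to $0$; hence $\rho_a\bmod\varpi$ coincides with $\rho^{\op{univ}}\bmod\mathfrak{m}=\bar\rho$. Consequently $\rho_a$ is unobstructed.

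With this in place, I would simply apply Theorem~\ref{unobstructed implies mu is zero} with $\rho=\rho_a$. Part~\eqref{p1 of unobstructed implies mu is zero} yields the weak Leopoldt conjecture for $\op{Ad}\rho_a$, which is assertion~(1); part~\eqref{p2 of unobstructed implies mu is zero} yields that $\Rfine{\mathbf{A}(\op{Ad}\rho_a(1))}{K_{\op{cyc}}}$ is cotorsion over $\Lambda$ with $\mufn{\op{Ad}\rho_a(1)}=0$; and, when $n=2$, part~\eqref{p3 of unobstructed implies mu is zero} yields the analogous statement for $r=\op{Sym}^2(\rho_a)\otimes(\det\rho_a)^{-1}\chi_p$ (using that $r^*=\op{Ad}^0\rho_a$ is a direct summand of $\op{Ad}\rho_a$, so its residual $H^2$ vanishes as well). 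This completes assertion~(2).

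I do not anticipate a genuine obstacle here: the corollary is essentially a repackaging of Theorems~\ref{neat implies unobstructed} and~\ref{unobstructed implies mu is zero}. The only point that needs even a line of justification — and hence the ``hard part'' in a purely formal sense — is the identification of the residual representation of $\rho_a$ with $\bar\rho$, which is what allows ``unobstructed'' to pass from $\bar\rho$ to every member $\rho_a$ of the parametrized family.
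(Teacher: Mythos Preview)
Your proposal is correct and follows exactly the paper's approach: the paper's proof is the single sentence ``The result is a direct consequence of Theorem~\ref{neat implies unobstructed} and Theorem~\ref{unobstructed implies mu is zero},'' and you have simply unpacked this, including the (immediate) verification that $\rho_a$ has residual representation $\bar\rho$.
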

\begin{proof}
The result is a direct consequence of Theorem \ref{neat implies unobstructed} and Theorem \ref{unobstructed implies mu is zero}.
\end{proof}

In section 1.13 of \emph{loc.cit.}, Mazur constructs examples of $2$-dimensional neat, odd Galois representations associated to certain $S_3$-extensions of $L/\Q$ ( which means $d=3$). The examples give a family of odd representations $\bar{\rho}$ for which the above Corollary applies. More specifically, let $p$ be a prime number which can be represented as $27+4a^3$, where $a$ is an integer. Let $L$ be the splitting field of $f(x):=x^3+ax+1$, the Galois group $\op{Gal}(L/\Q)$ is isomorphic to $S_3$. The discriminant of $f(x)$ is $-p$ and therefore, $p$ is the only prime that is ramified in $L$. There is a natural inclusion of $S_3$ into $\op{GL}_2(\F_p)$, via which we obtain a Galois representation $\bar{\rho}:\op{G}_{\Q,\{p,\infty\}}\rightarrow \op{GL}_2(\F_p)$. We refer to \emph{loc. cit.} for further details. Such Galois representations are referred to as \emph{special $S_3$-representations} since they are unobstructed, i.e., $H^2(\op{G}_{\Q,\{p,\infty\}}, \op{Ad}\bar{\rho})=0$. 
%In fact, it is shown that $\bar{\rho}$ is neat. 
Since $\bar{\rho}$ is an $S_3$-representation, it is easy to see that $\bar{\rho}$ is a direct summand of $\op{Ad}\bar{\rho}$. Hence, it follows that $H^2(\op{G}_{\Q,\{p,\infty\}},{\bar{\rho}})=0$. 
Note that $\bar\rho$ is self-dual i.e. $\bar\rho^{\vee} = \bar\rho$.
Therefore, it follows from Theorem \ref{theorem 3.6} that for any representation $\rho:\op{G}_{\Q, \{p,\infty\}}\rightarrow \op{GL}_2(\Z_p)$ that lifts $\bar{\rho}$, the following assertions hold:
\begin{enumerate}
    \item the weak Leopoldt conjecture is true for $\rho$,
    \item the fine Selmer group $\Rfine{\mathbf{A}(\rho(1))}{\Q_{\op{cyc}}}$ associated to $\rho(1)$ is cotorsion over $\Lambda$ with $\mufn{\rho(1)}=0$.
\end{enumerate}

Similarly in \cite[Section 3.1]{bockle1999even}, B\"{o}ckle constructs examples of $2$-dimensional neat, even Galois representations associated to certain totally real $S_3$-extensions of $L/\Q$ (which means $d=1$).
The examples give a family of even representations $\bar{\rho}$ for which the above Corollary applies.
To be precise, let $p$ be a prime number which can be represented as $4a^6-27$, where $a \geq 2$ is an integer and $L$ be the splitting field of $f(x):=x^3-a^2x-1$. So the Galois group $\op{Gal}(L/\Q)$ is isomorphic to $S_3$. The discriminant of $f(x)$ is $p$ and therefore, $p$ is the only prime that is ramified in $L$. Using the arguments of the previous paragraph, we obtain a Galois representation $\bar{\rho}:\op{G}_{\Q,\{p,\infty\}}\rightarrow \op{GL}_2(\F_p)$. 
In \cite[Theorem 3.1]{bockle1999even}, B\"{o}ckle proves that $\bar\rho$ is neat and hence, unobstructed if the following hypotheses are satisfied:
\begin{enumerate}
    \item $p$ satisfies the Ankeney-Artin-Chowla conjecture,
    \item $\left(u\left(\dfrac{3}{2a^2}\right)-1\right)(2a^3+9)/6-\left(u\left(\dfrac{3}{3-2a^3}\right)-1\right) \not\equiv \dfrac{1}{243}(4a^6-27)(9/4+a^3) \pmod{(4a^6-27)^2}$, where for $x \in \Z_p$, $u(x):=x/x^p$.
\end{enumerate}
It is verified in \emph{loc.cit.} that these hypotheses are satisfied for all $108$ primes $p$ of the form $4a^6-27$ with $2 \leq a \leq 1000$ (see \cite[Section 3.1]{bockle1999even} for more details).
%In fact, it is shown that $\bar{\rho}$ is neat. 
As noted above, $\bar{\rho}$ is a direct summand of $\op{Ad}\bar{\rho}$ in this case as well. Therefore, we conclude, using the same arguments as above, that for any representation $\rho:\op{G}_{\Q, \{p,\infty\}}\rightarrow \op{GL}_2(\Z_p)$ that lifts $\bar{\rho}$, the following assertions hold:
\begin{enumerate}
    \item the weak Leopoldt conjecture is true for $\rho$.
    \item The fine Selmer group $\Rfine{\mathbf{A}(\rho(1))}{\Q_{\op{cyc}}}$ associated to $\rho(1)$ is cotorsion over $\Lambda$ with $\mufn{\rho(1)}=0$.
\end{enumerate}

\section{Residually dihedral representations}\label{s 5}
\par In this section, we study residually dihedral Galois representations that arise from modular forms to illustrate Theorem \ref{th h2 vanishing}.

\par To be precise, let $p$ be an odd prime, $\F$ be a finite field of characteristic $p$, $S$ be a finite set of primes of $\Q$ containing $p$ and $\infty$ and $\bar\rho : G_{\Q,S} \to \op{GL}_2(\F)$ be a continuous representation.
We assume that $\bar\rho$ is dihedral, which is to say that there exists a quadratic extension $K$ of $\Q$ and a character $\psi:\op{G}_{K,S'}\rightarrow \F^\times$, where $S'$ is the set of primes of $K$ that lie above the primes in $S$, such that $\bar{\rho}:\op{G}_{\Q,S}\rightarrow \op{GL}_2(\F)$ is isomorphic to the induced representation $\op{Ind}_{\op{G}_{K,S'}}^{\op{G}_{\Q,S}} (\psi)$. 
For the ease of notation, we will denote this representation by $\op{Ind}_{\op{G}_{K}}^{\op{G}_{\Q}} (\psi)$.
%Let $f$ be a newform on $\Gamma_1(N)$ of weight $k\geq 2$. As is well known, when $f$ arises from an elliptic curve $E_{/\Q}$, $f$ is of weight $2$ on $\Gamma_0(N)$.
%{\color{red}Recalling the notation established in the previous section, let $F$ be the coefficient field of $f$ and $\cO_F$ be the ring of integers of $F$.
%Let $\mathfrak{p}$ be a prime of $\cO_F$, $\cO$ be the completion of $\cO_F$ at $\mathfrak{p}$, $\F$ be the residue field of $\cO$ and $p$ be the rational prime lying below $\mathfrak{p}$.}

\par Let $\cO'$ be the ring of integers of a finite extension of $\Q_p$ with residue field $\F$ and fix a uniformizer $\varpi$ of $\cO'$.
We say that a representation $\rho : G_{\Q,S} \to \op{GL}_2(\cO')$ is \emph{residually dihedral} if the residual representation $\bar\rho : G_{\Q,S} \to \op{GL}_2(\F)$ obtained by reducing $\rho$ modulo $\varpi$ is dihedral as described above.
Note that a residually dihedral representation $\rho$ is not necessarily dihedral.

\par Now suppose $\bar\rho : G_{\Q,S} \to \op{GL}_2(\F)$ is dihedral which means  $\bar\rho = \op{Ind}_{\op{G}_K}^{\op{G}_{\Q}} (\psi)$ for some character $\psi : G_{K,S'} \to \FF^{\times}$. Then the projective image of $\bar\rho$ (i.e. the image of $\bar\rho$ under the natural surjective map $\op{GL}_2(\F) \to \op{PGL}_2(\F)$ obtained by going modulo scalars) is either a dihedral group or a cyclic group.
Moreover, $\bar\rho$ is absolutely irreducible if and only if the projective image of $\bar\rho$ is a dihedral group.

\par In addition to being dihedral, now assume $\bar\rho$ is also odd and absolutely irreducible. Combining the work of Khare and Wintenberger on Serre's conjecture (\cite[Theorem 9.1]{KW}) and work of Kisin (\cite[Theorem 0.1]{Kisin}), we see that $\bar{\rho}$ lifts to a $p$-adic modular Galois representation $\rho_{f,\mathfrak{p}}:\op{G}_{\Q,S}\rightarrow \op{GL}_2(\cO)$ without increasing the set $S$ of ramified primes for the representation. There is however, always a natural choice of lift $\rho:\op{G}_{\Q,S}\rightarrow \op{GL}_2(W(\F))$ (where $W(\F)$ is the ring of Witt vectors of $\F$), letting $\tilde{\psi}$ denote the Teichm\"uller lift of $\psi$, we see that $\rho=\op{Ind}_{\op{G}_{K}}^{\op{G}_{\Q}}(\tilde{\psi})$ is a lift of $\bar{\rho}$.

\par In the setting of dihedral representations $\bar\rho$ as above, we will provide explicit criteria for the conditions in Theorem~\ref{th h2 vanishing} to be satisfied.
%As a result, we get that $H^2(\Q_S/\Q, \mathbf{V}_{\bar{\rho}})=0$ when these criteria are satisfied.
As a result, we obtain that if $\bar\rho$ satisfies these criteria, then for all characteristic $0$ lifts $\rho$ of $\bar\rho$, and in particular for the lifts arising from modular newforms, the weak Leopoldt conjecture holds for $\rho^*$ and $\mufn{\rho}=0$ (see Theorem~\ref{th 5.6}).
Note that lifts of such representations arising from CM modular forms have been studied in \cite{billereynuccio}.
%Let $\rho$ be any lift of $\bar\rho$ arising from a modular newform as described above.
%Recall that Theorem \ref{theorem 3.6} states that if $H^2(\Q_S/\Q, \mathbf{V}_{\bar{\rho}})=0$, then the weak Leopoldt conjecture holds for $\rho$ and $\mufn{\rho}=0$. We will provide explicit criteria for the vanishing of $H^2(\Q_S/\Q, \mathbf{V}_{\bar{\rho}})$ in Theorem \ref{th h2 vanishing} which in turn will imply the vanishing of $\mufn \rho$ for all characteristic $0$ lifts $\rho$ of $\bar\rho$ and in particular, for the lifts arising from modular newforms.

\par
% $K/\Q$ be a quadratic extension and $S$ be a set of rational primes containing $p$. Denote by $S'$ the set of primes of $K$ that lie above the primes in $S$. Fix a character $\psi:\op{G}_{K,S'}\rightarrow \F^\times$, where $\F$ is a field of characteristic $p$. Note that the order of $\psi$ is coprime to $p$.

\begin{lemma}\label{dihedral lemma 1}
Let $\cO$ be the ring of integers of a finite extension of $\Q_p$ with residue field $\F$.
Let $\rho : G_{\Q,S} \to \op{GL}_2(\cO)$ be a Galois representation such that its residual representation $\bar{\rho}=\op{Ind}_{\op{G}_K}^{\op{G}_{\Q}} (\psi)$ is dihedral as above.
Let $L$ be the extension of $\Q$ cut out by $\bar\rho$ and let $G =\op{Gal}(L/\Q)$. Then $H^1(G, \mathbf{V}_{\bar{\rho}})=0$ (i.e. condition \eqref{c3 of th h2} in Theorem \ref{th h2 vanishing} is satisfied).
\end{lemma}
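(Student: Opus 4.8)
The plan is to show that the finite group $G=\op{Gal}(L/\Q)$ has order prime to $p$, which immediately forces $H^1(G,\mathbf{V}_{\bar\rho})$ to vanish. Since $\bar\rho$ has finite image, $L/\Q$ is a finite Galois extension and the tautological identification $G\cong \op{Im}(\bar\rho)\subseteq \op{GL}_2(\F)$ holds. First I would record the shape of this image. Let $\sigma$ denote the nontrivial element of $\op{Gal}(K/\Q)$ and $\psi^\sigma$ the conjugate character. Since $\op{G}_K$ is normal of index $2$ in $\op{G}_\Q$, Mackey's formula gives $\bar\rho|_{\op{G}_K}\simeq \psi\oplus\psi^\sigma$; hence, in the basis coming from the induction, $\bar\rho(\op{G}_K)$ is contained in the diagonal torus $\F^\times\times\F^\times$ and in particular has order dividing $(\#\F-1)^2$, while $\bar\rho(\op{G}_\Q)$ is an extension of a subgroup of $\Z/2\Z$ by $\bar\rho(\op{G}_K)$. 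Consequently $\#G$ divides $2(\#\F-1)^2$.

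Next, since $p$ is odd and $\#\F$ is a power of $p$, the integer $2(\#\F-1)^2$ is prime to $p$, so $\op{gcd}(\#G,p)=1$. Finally, multiplication by $\#G$ is invertible on the $\F$-vector space $\mathbf{V}_{\bar\rho}$ (because $p\nmid\#G$), whereas $\#G$ annihilates $H^i(G,\mathbf{V}_{\bar\rho})$ for every $i\geq 1$; therefore $H^1(G,\mathbf{V}_{\bar\rho})=0$, which is exactly condition \eqref{c3 of th h2} of Theorem \ref{th h2 vanishing}.

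There is no genuine obstacle here: the whole argument reduces to an elementary bound on $\#G$. The only point that really uses the dihedral hypothesis, and that one should state with care, is the containment of $\bar\rho(\op{G}_K)$ in a split torus, which rests on $\psi$ taking values in $\F^\times$ rather than in the multiplicative group of a quadratic extension of $\F$. One could alternatively route the argument through the inflation--restriction sequence for the normal subgroup $\op{Gal}(L/K)$ of $G$, whose quotient has order $2$; this reduces the claim to the vanishing of $H^1(\op{Gal}(L/K),\F(\psi))$ and $H^1(\op{Gal}(L/K),\F(\psi^\sigma))$, and since $\op{Gal}(L/K)$ embeds into $\F^\times\times\F^\times$ via $(\psi,\psi^\sigma)$ this again just amounts to the same order computation.
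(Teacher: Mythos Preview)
Your argument is correct and follows exactly the paper's approach: the paper's proof is the single line ``Observe that the order of $G$ is coprime to $p$. Therefore, it follows that $H^1(G,\mathbf{V}_{\bar\rho})=0$.'' You have simply supplied the justification for the coprimality assertion via the bound $\#G\mid 2(\#\F-1)^2$, which the paper leaves implicit.
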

\begin{proof}
Observe that the order of $G$ is coprime to $p$. Therefore, it follows that $H^1(G, \mathbf{V}_{\bar{\rho}})=0$.
\end{proof}

Let $\bar{\rho}=\op{Ind}_{\op{G}_K}^{\op{G}_{\Q}} (\psi)$ and let $L$ be the field extension of $\Q$ which is fixed by the kernel of $\bar{\rho}$ as above. Note that $K\subset L$. 
%Let $K'$ be the composite of $K$ and $\Q(\mu_p)$ and $L'$ be the composite of $L$ and $\Q(\mu_p)$.
%So $K'=K(\mu_p)$ and $L'=L(\mu_p)$.
%and the kernel of $\psi$ is precisely $\op{Gal}(\bar{K}/L)$. 
Let $K_1$ and $L_1$ be the mod-$p$ Hilbert class fields of $K$ and $L$, respectively. In other words, $\op{Gal}(K_1/K)$ and $\op{Gal}(L_1/L)$ are identified with the mod-$p$ quotients of the class groups of $K$ and $L$, respectively.
Now $S$ is a finite set of primes of $\Q$ containing $p$, $\infty$ and the primes $\ell\neq p$ at which $\bar\rho$ is ramified. So $S$ contains the primes that ramify in $L$. 
Set $K_{1,S}$ (resp. $L_{1,S}$) to be the maximal subextension of $K_1$ (resp. $L_1$) in which the primes of $K$ (resp. $L$) above $S$ split completely. 
Thus, $\op{Gal}(K_{1,S}/K)$ (resp. $\op{Gal}(L_{1,S}/L)$) is identified with the maximal quotient of $\op{Cl}(K)/p\op{Cl}(K)$ (resp. $\op{Cl}(L)/p\op{Cl}(L)$) such that the primes of $K$ (resp. $L$) above $S$ split completely in the corresponding subextension. 
The fields defined fit into the following diagram:
\[
\begin{tikzpicture}[node distance = 1.5cm, auto]
      \node (K) {$K$} [above of=Q];
      %\node (M) [above of=K, right of=K] {$M$};
      \node (L) [above of=K, right of=K] {$L$};
      %\node (K') [above of=K] {$K'$};
      %\node (L') [above of=L] {$L'$};
      \node (KS) [above of=K] {$K_{1,S}$};
      %\node (M1) [above of=M] {$M_1$};
      \node (LS) [above of=L] {$L_{1,S}$};
      \node (K1) [above of=KS] {$K_{1}$};
      \node (L1) [above of=LS] {$L_{1}$};
      \draw[-] (K) to node {} (L);
      %\draw[-] (M) to node {} (L);
      \draw[-] (K) to node {} (KS);
      \draw[-] (L) to node {} (LS);
      %\draw[-] (KS) to node {} (K1);
      %\draw[-] (K') to node {} (L');
      %\draw[-] (K') to node {} (KS);
      %\draw[-] (L') to node {} (LS);
      \draw[-] (KS) to node {} (LS);
      \draw[-] (KS) to node {} (K1);
      \draw[-] (LS) to node {} (L1);
      \draw[-] (K1) to node {} (L1);
      \end{tikzpicture}
\]
Note that all fields in the diagram are Galois over $\Q$ and that $G=\op{Gal}(L/\Q)$ acts naturally on $H_{L}:=\op{Gal}(L_1/L)$ and $H_{L,S} :=\op{Gal}(L_{1,S}/L)$.
Here, the action is described as follows. Given $v\in H_{L}$ and $\sigma\in G$, pick a lift $\tilde{\sigma}$ of $\sigma$ to $\op{Gal}(L_1/\Q)$. Since $H_{L}$ is a normal subgroup of $\op{Gal}(L_1/\Q)$ it follows that $\tilde{\sigma} v \tilde{\sigma}^{-1}$ belongs to $H_{L}$. Since $H_{L}$ is abelian, $\tilde{\sigma} v \tilde{\sigma}^{-1}$ is independent of the choice of lift $\tilde{\sigma}$ of $\sigma$. The action is defined by setting $\sigma\cdot v:=\tilde{\sigma} v \tilde{\sigma}^{-1}$ for any choice of lift $\tilde{\sigma}$ of $\sigma$. 
The module $H_{L,S}$ is a $G$-stable quotient of $H_{L}$, i.e., a quotient by a $G$-stable submodule.

\begin{lemma}\label{ lemma 5.2}
Suppose that $\bar{\rho}=\op{Ind}_{\op{G}_K}^{\op{G}_{\Q}} (\psi) : G_{\Q,S} \to \op{GL}_2(\F)$ is a dihedral representation and let $L$, $K_{1,S}$ and $L_{1,S}$ be the fields defined as above. 
Assume that $K_{1,S}\cdot L=L_{1,S}$. Let $G=\op{Gal}(L/\Q)$ and $M$ be an irreducible $\F_p[G]$-module such that $\op{dim} M>1$. Then, with respect to notation above, 
\[\op{Hom}_{G}(H_{L,S}, M)=0.\]
\end{lemma}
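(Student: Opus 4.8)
The plan is to exploit the hypothesis $K_{1,S}\cdot L = L_{1,S}$ to pin down the $G$-module structure of $H_{L,S}=\op{Gal}(L_{1,S}/L)$, and then observe that this structure is incompatible with admitting a nonzero $G$-equivariant map to an irreducible module of dimension $>1$. First I would note that $K_{1,S}\cdot L/L$ is an abelian extension whose Galois group is a quotient of $\op{Gal}(K_{1,S}/K)$ via the restriction map (using that $K\subset L$ and the standard translation isomorphism $\op{Gal}(K_{1,S}\cdot L/L)\xrightarrow{\sim}\op{Gal}(K_{1,S}/K_{1,S}\cap L)$). Since the hypothesis says $K_{1,S}\cdot L = L_{1,S}$, we conclude that $H_{L,S}$ is a quotient of $\op{Gal}(K_{1,S}/K)$, and in particular that the action of $H:=\op{Gal}(L/K)\subset G$ on $H_{L,S}$ is \emph{trivial}: indeed $\op{Gal}(K_{1,S}/K)$ carries an action of $\op{Gal}(K/\Q)$ but the subgroup $H$ of $G$, which fixes $K$, acts trivially on anything pulled back from $K$. (One must check the $G$-action described in the text — conjugation of Frobenius-type elements — is compatible with this identification; this is the translation of the conjugation action under the restriction isomorphism, and is routine.)

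Next I would use this to finish. Suppose $\phi\in\op{Hom}_G(H_{L,S},M)$ is nonzero. Since $H$ acts trivially on $H_{L,S}$, the image $\phi(H_{L,S})$ is contained in $M^H$, the $H$-invariants of $M$, and $\phi(H_{L,S})$ is a nonzero $G$-submodule of $M^H$ (it is $G$-stable because $\phi$ is $G$-equivariant and $H_{L,S}$ is $G$-stable). But $M$ is irreducible as an $\F_p[G]$-module, so any nonzero $G$-submodule of $M$ equals $M$; hence $M = M^H$, i.e. $H$ acts trivially on all of $M$. Therefore $M$ factors through $G/H = \op{Gal}(K/\Q)$, which is cyclic of order $2$ (as $K/\Q$ is quadratic). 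An irreducible $\F_p[\Z/2]$-module with $p$ odd has dimension $1$, contradicting $\dim M > 1$. Hence $\op{Hom}_G(H_{L,S},M)=0$.

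The step I expect to be the main obstacle is the first one: carefully justifying that the hypothesis $K_{1,S}\cdot L = L_{1,S}$ forces $H=\op{Gal}(L/K)$ to act trivially on $H_{L,S}$, with the $G$-action as defined in the text (conjugation of a lift). One must be careful that $H_{L,S}$ is defined as a quotient of the class group of $L$ by a $G$-stable submodule, and that the identification with a quotient of $\op{Gal}(K_{1,S}/K)$ is $G$-equivariant for the stated conjugation actions — i.e. that the Artin map intertwines the Galois action on ideal classes with the conjugation action on the abelianized Galois group, and that restriction $\op{Gal}(L_{1,S}/L)\to\op{Gal}(K_{1,S}/K)$ (well, its inverse on the relevant quotient) respects these. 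Once that functoriality is in place, the representation-theoretic conclusion is immediate. An alternative, perhaps cleaner, route to the same end avoiding explicit Artin-map bookkeeping: observe directly that $K_{1,S}\cdot L/L$ is \emph{abelian over $K$} (being a compositum of the abelian-over-$K$ extension $K_{1,S}/K$ with $L/K$), hence $\op{Gal}(K_{1,S}\cdot L/L)$ is a module over $\op{Gal}(L/K)=H$ on which $H$ acts through its abelianization coming from an extension abelian over $K$ — and since this Galois group is fixed pointwise by $H$ in the relevant sense, $H$ acts trivially; then apply the hypothesis to replace $K_{1,S}\cdot L$ by $L_{1,S}$.
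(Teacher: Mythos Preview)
Your proposal is correct and follows essentially the same approach as the paper. In fact, the ``alternative, cleaner route'' you sketch at the end---observing that $L_{1,S}=K_{1,S}\cdot L$ is abelian over $K$ (as a compositum of two abelian extensions of $K$), so that $H=\op{Gal}(L/K)$ acts trivially on $H_{L,S}$ by conjugation---is exactly the argument the paper gives; the paper then finishes by invoking Maschke (since $p\nmid |G|$) to decompose $H_{L,S}$ into $1$-dimensional $G$-modules, whereas your direct argument via $M=M^H$ avoids Maschke but reaches the same conclusion.
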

\begin{proof}
Since it is assumed that $K_{1,S}\cdot L=L_{1,S}$, it follows that the $G$-action on $H_{L,S}$ factors through an action of the quotient $\op{Gal}(K/\Q)$. 
Indeed, both $K_{1,S}$ and $L$ are abelian extensions of $K$ and hence, so is $L_{1,S}=L.K_{1,S}$.
Therefore, $\op{Gal}(L/K)$ acts trivially on $\op{Gal}(L_{1,S}/L)$.
%This is because $\op{Gal}(L/K)$ acts trivially on $L$ and hence on $K_1^S\cdot L=L_1^S$. 
As $\op{Gal}(K/\Q)$ is abelian, it follows that any irreducible representation of $\op{Gal}(K/\Q)$ is one-dimensional. The order of $G$ is coprime to $p$ and hence, it follows that $H_{L,S}$ decomposes into $1$-dimensional representations of $G$ that factor through the action of $\op{Gal}(K/\Q)$. Combining this with the assumption that $M$ is an irreducible $G$-module with $\op{dim} M>1$, we get that $\op{Hom}_{G}(H_{L,S}, M)=0$.
\end{proof}
%We fix the Galois representation $\rho$ and let $S$ be a finite set of primes containing $p$, $\infty$ and the primes $\ell\neq p$ at which $\rho$ is ramified.
\begin{lemma}\label{dihedral lemma 2}
Assume that $\bar\rho : G_{\Q,S} \to \op{GL}_2(\F)$ is dihedral i.e. $\bar{\rho}=\op{Ind}_{\op{G}_K}^{\op{G}_{\Q}} (\psi)$, and that its projective image is a dihedral group.
Let $L$, $K_{1,S}$ and $L_{1,S}$ be the fields defined as above and let $G = \op{Gal}(L/\Q)$.
Moreover, assume that the condition $K_{1,S}\cdot L=L_{1,S}$ of Lemma \ref{ lemma 5.2} is satisfied. Then, \[\op{Hom}_{G}(H_{L,S}, V_{\bar{\rho}})=0,\]i.e., condition \eqref{c1 of th h2} of Theorem \ref{th h2 vanishing} is satisfied for $\bar{\rho}$. 
\end{lemma}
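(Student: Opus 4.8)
The plan is to reduce the statement $\op{Hom}_G(H_{L,S}, V_{\bar\rho}) = 0$ to Lemma \ref{ lemma 5.2} by checking that $V_{\bar\rho}$, viewed as an $\F_p[G]$-module, decomposes into irreducible constituents each of dimension greater than $1$. Recall that $\bar\rho = \op{Ind}_{\op{G}_K}^{\op{G}_\Q}(\psi)$ has projective image a dihedral group, which by the discussion preceding the lemma is equivalent to $\bar\rho$ being absolutely irreducible. The field $L$ is cut out by $\bar\rho$, so $G = \op{Gal}(L/\Q)$ acts faithfully on $V_{\bar\rho}$ through the inclusion $G \hookrightarrow \op{GL}_2(\F)$; that is, $V_{\bar\rho}$ is (the restriction of scalars to $\F_p$ of) a faithful two-dimensional $\F$-representation of $G$.

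First I would record that, since $\bar\rho$ is absolutely irreducible over $\F$, the $\F[G]$-module $V_{\bar\rho}$ is irreducible, and moreover it has no one-dimensional $\F$-subrepresentation; a fortiori, when we restrict scalars and regard $V_{\bar\rho}$ as an $\F_p[G]$-module, every irreducible $\F_p[G]$-constituent $M$ appearing in it satisfies $\dim_{\F_p} M > 1$. (Here one uses that an irreducible $\F_p[G]$-constituent of $V_{\bar\rho}$ of dimension $1$ would, after extending scalars to $\F$, produce a one-dimensional $\F[G]$-subquotient of $V_{\bar\rho} \otimes_{\F_p}\F$, contradicting the absolute irreducibility of $\bar\rho$ together with the fact that $G$ acts via $\op{GL}_2$.) Then I would apply Lemma \ref{ lemma 5.2} to each such constituent $M$: the hypothesis $K_{1,S}\cdot L = L_{1,S}$ is exactly what the lemma requires, and it gives $\op{Hom}_G(H_{L,S}, M) = 0$ for every irreducible $\F_p[G]$-module $M$ with $\dim M > 1$.

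Finally I would assemble these vanishings. Writing $V_{\bar\rho}$ as an $\F_p[G]$-module, since $|G|$ is coprime to $p$ the module $V_{\bar\rho}$ is semisimple, so it is a direct sum of irreducible $\F_p[G]$-modules, each of dimension $>1$ by the previous paragraph. Since $\op{Hom}_G(H_{L,S}, -)$ is additive in its second argument, $\op{Hom}_G(H_{L,S}, V_{\bar\rho}) = 0$, which is condition \eqref{c1 of th h2} of Theorem \ref{th h2 vanishing} for $\bar\rho$.

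The main obstacle, such as it is, lies in the bookkeeping of the previous paragraph: one must be careful that ``$V_{\bar\rho}$ has no one-dimensional constituent'' is phrased over the right coefficient field. The cleanest route is probably to argue directly that any $G$-stable $\F_p$-line in $V_{\bar\rho}$ would force the image of $G$ in $\op{GL}_2(\F)$ to be simultaneously triangularizable over some extension, contradicting that the projective image is dihedral (non-abelian, hence not contained in a Borel); alternatively one invokes that $\bar\rho$ absolutely irreducible already rules out $\F[G]$-subrepresentations of dimension $1$, and notes that restriction of scalars from $\F$ to $\F_p$ cannot introduce new one-dimensional constituents because $\F/\F_p$ is separable and $\dim_{\F_p}\F = \dim_{\F_p} M$ would be forced for any such constituent, again contradicting faithfulness through $\op{GL}_2$. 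Either way the point is elementary once stated carefully, and no serious computation is needed.
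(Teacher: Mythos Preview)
Your proposal is correct and follows essentially the same approach as the paper: reduce to Lemma~\ref{ lemma 5.2} via the irreducibility of $V_{\bar\rho}$. The paper's proof simply asserts that $\bar\rho$ is an irreducible representation of $G$ of dimension~$2$ and invokes Lemma~\ref{ lemma 5.2} directly; your version is more careful about the distinction between $\F[G]$-modules and $\F_p[G]$-modules (Lemma~\ref{ lemma 5.2} being stated for the latter), and your observation that a $G$-stable $\F_p$-line in $V_{\bar\rho}$ would yield a $G$-stable $\F$-line---contradicting absolute irreducibility---cleanly handles the case $\F\neq\F_p$ that the paper glosses over.
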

\begin{proof}
Since the projective image of $\bar{\rho}$ is dihedral, it follows that ${\bar{\rho}}$ is an irreducible representation of $G$ of dimension $2$. Hence, the lemma follows directly from Lemma \ref{ lemma 5.2}.
\end{proof}
We shall provide some examples in which the condition $K_{1,S}\cdot L=L_{1,S}$ is satisfied. First, we obtain sufficient conditions for condition \eqref{c2 of th h2} in Theorem \ref{th h2 vanishing} to hold. Let $\nu$ be a generator of $\op{Gal}(K/\Q)$ and $\psi^{\nu}$ be defined as follows:
\[\psi^{\nu}(x):=\psi(\tilde{\nu} x \tilde{\nu}^{-1}),\] where $\tilde{\nu}$ is a lift of $\nu$ in $G_{\Q,S}$. Note that in the above formula, $\psi(\tilde{\nu} x \tilde{\nu}^{-1})$ is independent of the choice of the lift $\tilde{\nu}$ of $\nu$.

\begin{lemma}
Assume that $\bar{\rho}=\op{Ind}_{\op{G}_K}^{\op{G}_{\Q}} (\psi)$.
Then the projective image of $\bar{\rho}$ is a dihedral group if and only if $\psi\neq \psi^{\nu}$.
Moreover, the image of $\bar\rho$ is itself a non-abelian dihedral group if and only if $\psi^{-1}= \psi^{\nu}$ and $\psi^{\nu} \neq \psi$.
\end{lemma}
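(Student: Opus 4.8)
The plan is to study $\bar\rho$ through its restriction to $\op{G}_{K,S'}$ together with the action of a lift $\widetilde{\nu}\in\op{G}_{\Q,S}$ of the generator $\nu$ of $\op{Gal}(K/\Q)$. From the definition of induction (Mackey's formula) one has $\bar\rho|_{\op{G}_{K,S'}}\cong\psi\oplus\psi^{\nu}$; since $\widetilde{\nu}^{2}\in\op{G}_{K,S'}$ we get $(\psi^{\nu})^{\nu}=\psi$, and since $\F^{\times}$ is abelian the definition of $\psi^{\nu}$ does not depend on the lift. The case $\psi=\psi^{\nu}$ is then immediate: $\bar\rho|_{\op{G}_{K,S'}}$ is scalar-valued, so modulo scalars the image of $\bar\rho$ is generated by the single element $\bar\rho(\widetilde{\nu})$, hence the projective image is cyclic and $\op{Im}(\bar\rho)$ is not a non-abelian group; this settles both halves of the lemma when $\psi=\psi^{\nu}$ and reduces us to $\psi\neq\psi^{\nu}$.

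So assume $\psi\neq\psi^{\nu}$ and choose a basis of $\mathbf{V}_{\bar{\rho}}$ adapted to $\psi\oplus\psi^{\nu}$, so that $\bar\rho(g)=\op{diag}(\psi(g),\psi^{\nu}(g))$ for $g\in\op{G}_{K,S'}$. Using $(\psi^{\nu})^{\nu}=\psi$ one checks that $\bar\rho(\widetilde{\nu})$ conjugates each such diagonal matrix to $\op{diag}(\psi^{\nu}(g),\psi(g))$, and since some $\op{diag}(\psi(g),\psi^{\nu}(g))$ is non-scalar this forces $\bar\rho(\widetilde{\nu})$ to be antidiagonal. Thus $\op{Im}(\bar\rho)$ lies in the normaliser of the diagonal torus: modulo scalars, $\bar\rho(\op{G}_{K,S'})$ is the cyclic group generated by the nontrivial character $\psi(\psi^{\nu})^{-1}$, the image of $\bar\rho(\widetilde{\nu})$ has order $2$, and conjugation by the latter inverts the former. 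Hence the projective image is dihedral, which proves the first assertion; equivalently, one may run the Frobenius-reciprocity computation $\op{End}_{\op{G}_{\Q,S}}(\bar\rho)\cong\op{Hom}_{\op{G}_{K,S'}}(\psi,\psi\oplus\psi^{\nu})$ to see that $\bar\rho$ is absolutely irreducible iff $\psi\neq\psi^{\nu}$, and invoke the dichotomy already recorded in the text (for dihedral $\bar\rho$ the projective image is dihedral exactly when $\bar\rho$ is absolutely irreducible).

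For the second assertion, keep the matrix description and set $\Gamma:=\op{Im}(\bar\rho)$, $D:=\bar\rho(\op{G}_{K,S'})$ (an abelian subgroup of index $2$ in $\Gamma$ once $\psi\neq\psi^{\nu}$) and $t:=\bar\rho(\widetilde{\nu})$, antidiagonal. Conjugation by $t$ sends $\op{diag}(\psi(g),\psi^{\nu}(g))$ to $\op{diag}(\psi^{\nu}(g),\psi(g))$, which is the inversion of $D$ for every $g$ if and only if $\psi^{\nu}(g)=\psi(g)^{-1}$ for all $g$, i.e. $\psi^{\nu}=\psi^{-1}$. When $\psi^{\nu}=\psi^{-1}$ the group $D=\{\op{diag}(a,a^{-1}):a\in\op{Im}(\psi)\}$ is cyclic of order $n:=\#\op{Im}(\psi)$, with $n\geq 3$ precisely because $\psi\neq\psi^{\nu}=\psi^{-1}$; since $t^{2}=\bar\rho(\widetilde{\nu}^{2})\in D$ is a scalar matrix, hence $\pm I$, one identifies $\Gamma=\langle D,t\rangle$ with a dihedral group of order $2n$, which is non-abelian. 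Conversely, if $\psi^{\nu}\neq\psi^{-1}$ while $\psi^{\nu}\neq\psi$, conjugation by $t$ is not the inversion of $D$, and a non-abelian dihedral $\Gamma$ would force its abelian index-$2$ subgroup $D$ to be the rotation subgroup, on which the outer elements act by inversion --- a contradiction; hence $\Gamma$ is non-abelian dihedral iff $\psi^{\nu}=\psi^{-1}$ and $\psi^{\nu}\neq\psi$. The step I expect to be the main obstacle is this last one: passing from the projective image to the honest image $\Gamma$, i.e. pinning down which central extension of the projective dihedral group is realised. The equivalence ``$\psi^{\nu}=\psi^{-1}$'' $\Longleftrightarrow$ ``$t$ inverts $D$'' is immediate; the care goes into arguing that, after possibly rescaling $\widetilde{\nu}$, $t$ may be taken of order $2$ so that $\Gamma$ is genuinely dihedral, and in the converse direction into excluding the remaining non-dihedral shapes of $\Gamma$ using that $D$ is abelian of index $2$.
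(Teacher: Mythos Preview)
Your argument follows essentially the same route as the paper's. For the first assertion the paper writes down the matrix form of $\bar\rho$ and checks directly for a common eigenvector of $\bar\rho(\tilde\nu)$ and the $\bar\rho(g)$, $g\in G_K$; your Frobenius--reciprocity computation and the projective--image argument are equivalent reformulations of the same idea. For the second assertion both you and the paper identify $D=\bar\rho(G_K)$ and $t=\bar\rho(\tilde\nu)$ and reduce to checking when conjugation by $t$ is inversion on $D$; the paper simply asserts the characterization ``image non-abelian dihedral $\Leftrightarrow$ $D$ cyclic of order $>2$ and $t$ acts by inversion'' without further comment.

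You are right to single out the order of $t$ as the delicate step, but your proposed fix does not work. If $\psi^\nu=\psi^{-1}$, then for any $h\in G_K$ one has
\[
\bigl(\bar\rho(\tilde\nu h)\bigr)^2 \;=\; t\,\bar\rho(h)\,t\,\bar\rho(h) \;=\; t^2\bigl(t^{-1}\bar\rho(h)\,t\bigr)\bar\rho(h) \;=\; t^2\,\bar\rho(h)^{-1}\bar\rho(h) \;=\; t^2,
\]
so replacing the lift $\tilde\nu$ by any other lift leaves $t^2$ unchanged; when $\psi(\tilde\nu^2)=-1$ (equivalently $\det\bar\rho$ is trivial rather than $\chi_K$) the group $\langle D,t\rangle$ is genuinely dicyclic rather than dihedral. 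The paper's proof glosses over exactly the same point, so this is a shared subtlety rather than a defect peculiar to your write-up. Your converse argument also needs a patch: the clause ``a non-abelian dihedral $\Gamma$ would force its abelian index-$2$ subgroup $D$ to be the rotation subgroup'' fails for $D_4$, which has three abelian index-$2$ subgroups; concretely, if $\psi$ has order $2$ and $\psi^\nu\neq\psi$ then $\psi^{-1}=\psi\neq\psi^\nu$, yet $D\cong C_2\times C_2$ and $\Gamma$ can be $D_4$. To rescue that direction you should separately exclude the case $\psi^2=1$ (or verify directly that $D$ contains an element of order $\geq 3$).
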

\begin{proof}
Recall that the projective image of $\bar\rho  =\op{Ind}_{\op{G}_K}^{\op{G}_{\Q}} (\psi)$ is dihedral if and only if $\bar\rho$ is irreducible.

Let $V$ be the vector space underlying $\bar\rho$.
As $\bar{\rho}=\op{Ind}_{\op{G}_K}^{\op{G}_{\Q}} (\psi)$, we get, after choosing a suitable basis of $V$, that $\bar\rho(g) = \begin{pmatrix}\psi(g) & 0 \\ 0 & \psi^{\nu}(g)\end{pmatrix}$ for all $g \in G_K$ and $\bar\rho(\tilde{\nu}) =\begin{pmatrix} 0 & 1 \\ a & 0\end{pmatrix}$ for some $a \in \mathbb{F}.$
Now $G_{\Q} = G_K \cup \tilde{\nu}G_K$.
Therefore, $\bar\rho$ is reducible if and only if there exists a non-zero $v \in V$ which is a common eigenvector for $\bar\rho(\tilde{\nu})$ and $\bar\rho(g)$ for all $g \in G_K$.
If $\psi = \psi^{\nu}$, then such an eigenvector clearly exists and hence, $\bar\rho$ is reducible.

If $\psi \neq \psi^{\nu}$, then there exists a $g_0 \in G_K$ such that $\psi(g_0) \neq \psi^{\nu}(g_0)$.
So if a non-zero vector $v \in V$ is an eigenvector for $\bar\rho(g_0)$, then either $\bar\rho(g_0)v =\psi(g_0)v$ or $\bar\rho(g_0)v =\psi^{\nu}(g_0)v$.
Let $w = \bar\rho(\tilde{\nu})v$. Then $w \neq 0$.
If $\bar\rho(g_0)v =\psi(g_0)v$, then $\bar\rho(g_0)w = \psi^{\nu}(g_0)w$.
As $\psi(g_0) \neq \psi^{\nu}(g_0)$ and $w \neq 0$, it follows that $w$ is not a scalar multiple of $v$. This means that $v$ is not an eigenvector of $\bar\rho(\tilde{\nu})$.
If $\bar\rho(g_0)v =\psi^{\nu}(g_0)v$, then we similarly conclude that $v$ is not an eigenvector of $\bar\rho(\tilde{\nu})$.
Therefore, if $\psi \neq \psi^{\nu}$, then $\bar\rho$ is irreducible.
This finishes the proof of the first part of the lemma.

Note that the image of $\bar\rho$ is a non-abelian dihedral group if and only if $\bar\rho(G_K)$ is  a cyclic group of order $> 2$ and $\bar\rho(\tilde{\nu}g\tilde{\nu}^{-1}) = \bar\rho(g^{-1})$ for all $g \in G_K$.
Further, $\bar\rho(\tilde{\nu}g\tilde{\nu}^{-1}) = \bar\rho(g^{-1})$ for all $g \in G_K$ if and only if $\psi^{\nu}=\psi^{-1}$. Moreover, if this holds, then $\bar\rho(G_K)$ is cyclic. If $\psi^{\nu}=\psi^{-1}$, then $\bar\rho(G_K)$ is cyclic of order $> 2$ if and only if $\psi^{-1} \neq \psi$.
Therefore, we conclude that the image of $\bar\rho$ is a non-abelian dihedral group if and only if $\psi^{\nu} =\psi^{-1}$ and $\psi^{\nu} \neq \psi$.
\end{proof}

\begin{lemma}\label{dihedral lemma 3}
Assume that $\bar{\rho}=\op{Ind}_{\op{G}_K}^{\op{G}_{\Q}} (\psi)$. 
%and the image of $\bar\rho$ is a non-abelian dihedral group. 
Let $\ell$ be a prime and $L$ be the extension of $\Q$ cut out by $\ker(\bar\rho)$. Suppose that the trivial representation is a subrepresentation of $\bar{\rho}_{|\ell}$. Let $w \mid \ell$ be a choice of prime of $K$ that lies above $\ell$. Then, at least one of the following conditions hold:
\begin{enumerate}
    \item\label{5.4 cond 1} $\ell$ splits in $K$ and either $\psi_{|w}=1$ or $\psi^{\nu}_{|w}=1$,
    \item\label{5.4 cond 2} $\ell$ is either inert or ramified in $K$, and the unique prime $w$ of $K$ lying above $\ell$ splits completely in $L$.
    %\item\label{5.4 cond 3} $\ell$ is inert in $K$ and $\ell\equiv  -1 \mod{p}$,
    %\item \label{5.4 cond 4} $\ell$ is ramified in $K$, the unique prime of $K$ lying above $\ell$ is \emph{not} completely split in $L$ and $\ell\equiv  -1 \mod{p}$.
    %\item \label{5.4 cond 5}$\ell=p=3$ and $p$ is either inert or ramified in $K$ and ramified in $L$.
\end{enumerate}

\end{lemma}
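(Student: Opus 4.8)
The plan is to translate the hypothesis into a statement about a single decomposition group and then split into cases according to how $\ell$ decomposes in $K$. First I would fix a prime $\mathfrak{w}$ of $\bar{\Q}$ above $w$; it determines a decomposition subgroup $G_w \subseteq G_K$ of $w$ and a decomposition subgroup $G_\ell \supseteq G_w$ of $\ell$, with $G_w = G_\ell \cap G_K$. Since the property that the trivial representation is a subrepresentation of $\bar\rho_{|\ell}$ does not depend on the chosen prime above $\ell$, the hypothesis says that some nonzero $v \in \mathbf{V}_{\bar\rho}$ is fixed by $G_\ell$. I would then fix a basis $e_1, e_2$ of $\mathbf{V}_{\bar\rho}$ in which $\bar\rho(g) = \op{diag}(\psi(g), \psi^{\nu}(g))$ for all $g \in G_K$, as in the description of the induced representation $\op{Ind}_{\op{G}_K}^{\op{G}_{\Q}}(\psi)$ given above. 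If $\ell$ splits in $K$, then $K_w = \Q_\ell$, so $G_\ell = G_w \subseteq G_K$ and $\bar\rho_{|G_\ell}$ acts diagonally through $\psi_{|w}$ and $\psi^{\nu}_{|w}$; a nonzero $G_\ell$-fixed vector $x e_1 + y e_2$ then has $x \neq 0$ or $y \neq 0$, which forces $\psi_{|w} = 1$ or $\psi^{\nu}_{|w} = 1$. This is alternative \eqref{5.4 cond 1}.

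Next I would treat the case where $\ell$ is inert or ramified in $K$, so that $w$ is the unique prime of $K$ above $\ell$ and $G_\ell$ surjects onto $\op{Gal}(K/\Q)$; choose $\tilde\nu \in G_\ell$ lifting the generator $\nu$, so that $G_\ell = G_w \sqcup \tilde\nu G_w$. The crucial point is that $\bar\rho(\tilde\nu)$ is antidiagonal in the basis $e_1, e_2$ whenever $\psi \neq \psi^{\nu}$: conjugation by $\bar\rho(\tilde\nu)$ carries $\op{diag}(\psi(g), \psi^{\nu}(g))$ to $\op{diag}(\psi^{\nu}(g), \psi(g))$ for $g \in G_K$, using the identity $\psi(\tilde\nu g \tilde\nu^{-1}) = \psi^{\nu}(g)$ (the definition of $\psi^{\nu}$) together with $\psi^{\nu}(\tilde\nu g \tilde\nu^{-1}) = \psi(\tilde\nu^2 g \tilde\nu^{-2}) = \psi(g)$, the last step because $\tilde\nu^2 \in G_K$ and a character of $G_K$ is invariant under conjugation by $G_K$. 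Writing a $G_\ell$-fixed vector as $v = x e_1 + y e_2 \neq 0$, invariance under the antidiagonal $\bar\rho(\tilde\nu)$ forces both $x \neq 0$ and $y \neq 0$; and in the degenerate case $\psi = \psi^{\nu}$ the restriction $\bar\rho_{|G_K}$ is scalar, so the conclusion below holds a fortiori. Since $v$ is also fixed by $G_w$, on which $\bar\rho$ acts diagonally through $\psi_{|w}$ and $\psi^{\nu}_{|w}$, the fact that $x, y \neq 0$ gives $\psi_{|w} = \psi^{\nu}_{|w} = 1$; hence $\bar\rho_{|G_w}$ is trivial, i.e. $G_w \subseteq \ker(\bar\rho) = \op{Gal}(\bar{\Q}/L)$, which is precisely the assertion that $w$ splits completely in $L$. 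This is alternative \eqref{5.4 cond 2}.

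The two case analyses themselves are routine; I expect the main obstacle to be the bookkeeping in the inert/ramified case --- reconciling the globally chosen diagonalizing basis with the local decomposition group at $\ell$, verifying the antidiagonality of $\bar\rho(\tilde\nu)$ via the two conjugation identities, and correctly disposing of the degenerate possibility $\psi = \psi^{\nu}$ (where $\bar\rho$ is reducible) so that the statement holds as written with no irreducibility hypothesis on $\bar\rho$.
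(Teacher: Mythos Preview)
Your argument is correct and follows the same overall case split as the paper (according to whether $\ell$ splits in $K$ or not), with an identical treatment of the split case. In the inert/ramified case, however, the paper takes a shorter route than you do: rather than establishing that $\bar\rho(\tilde\nu)$ is antidiagonal and then splitting off the degenerate subcase $\psi=\psi^{\nu}$, the paper simply observes that $G_w$ is normal in $G_\ell$ (index at most $2$), so $\tilde\nu G_w \tilde\nu^{-1}=G_w$; since $\psi^{\nu}(g)=\psi(\tilde\nu g\tilde\nu^{-1})$, triviality of $\psi_{|w}$ immediately forces triviality of $\psi^{\nu}_{|w}$ and conversely. This one-line normality argument yields $\psi_{|w}=\psi^{\nu}_{|w}=1$ without any case analysis on whether $\psi=\psi^{\nu}$ and without appealing to the explicit matrix shape of $\bar\rho(\tilde\nu)$. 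Your approach has the virtue of being entirely concrete and self-contained, but the paper's is a bit cleaner and uniform.
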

\begin{proof}
Assume that $\ell$ splits in $K$. Then, we have that $\bar{\rho}_{|w}=\mtx{\psi_{|w}}{0}{0}{\psi^{\nu}_{|w}}$, and the result follows in this case. 

Now suppose $\ell$ is either inert or ramified in $K$. 
Then $w$ is the unique prime of $K$ lying above $\ell$ and we can choose $\tilde\nu \in G_{\ell}$.
As the trivial representation is a subrepresentation of $\bar\rho_{|\ell}$, it follows that either $\psi_{|w}=1$ or $\psi^{\nu}_{|w}=1$.

As $\tilde\nu \in G_{\ell}$ and $G_w$ is normal in $G_{\ell}$, it follows that $\tilde\nu G_w \tilde\nu^{-1}= G_w$.
Therefore, it follows that if $\psi_{|w}=1$, then $\psi^{\nu}_{|w}=1$ and vice versa. This finishes the proof of the Lemma.

%$\bar{\rho}(\op{G}_{\ell})$ is either a dihedral group (including the Klein four group) or the cyclic group of order $2$. Thus, any cyclic quotient of $\bar{\rho}(\op{G}_{\ell})$ can have order at most $2$. This forces $\bar{\chi}|_{\ell}^2=1$.

%If $\ell=p$, then it follows that $p=3$. As $\bar\chi|_p$ is ramified at $p$, we also get that $p$ is ramified in $L$.
%If $\ell \neq p$, then $\bar{\chi}|_{\ell}^2=1$ implies that $\ell\equiv \pm 1\mod{p}$.
%Suppose $\ell \equiv -1 \pmod{p}$, $\ell$ is ramified in $K$ and the unique prime of $K$ lying above $\ell$ is completely split in $L$. Then $\bar\rho|_{\ell} = \mathbf{1} \oplus \eta$, where $\mathbf{1}$ is the trivial character and $\eta$ is a \emph{ramified} quadratic character of $G_{\ell}$.
%Since $\ell \equiv -1 \pmod{p}$, it follows that $\bar\chi_{\ell}$ does not occur as a quotient of $\bar\rho|_{\ell}$ in this case.
\end{proof}
Combining Lemmas \ref{dihedral lemma 1}, \ref{dihedral lemma 2} and \ref{dihedral lemma 3}, we obtain the following result.

\begin{theorem}\label{th 5.5}
Let $\cO$ be the ring of integers in a finite extension of $\Q_p$ with residue field $\F$ and $S$ be a set of primes of $\Q$ containing $p$ and $\infty$.
Consider a Galois representation $\rho:\op{G}_{\Q,S}\rightarrow \op{GL}_2(\cO)$ and let $\bar{\rho}$ be its residual representation. Assume that the following assertions hold:
\begin{enumerate}
    \item\label{5.5 cond 1} For some quadratic field extension $K/\Q$ and a character $\psi:G_{K,S'}\rightarrow \F^\times$ (where $S'$ is the set of primes of $K$ lying above primes in $S$), the residual representation $\bar{\rho}$ is isomorphic to the induced representation $\op{Ind}_{\op{G}_K}^{\op{G}_{\Q}}(\psi)$. 
    In addition, we assume that the projective image of $\bar{\rho}$ is a non-abelian dihedral group.
    \item\label{5.5 cond 2} With respect to above notation, we assume that $K_{1,S}\cdot L=L_{1,S}$ (cf. Lemma \ref{dihedral lemma 2}). 
    \item\label{5.5 cond 3} At each prime $\ell\in S$, the local representation $\bar{\rho}_{|\ell}$ does not have the trivial representation as a subrepresentation.
\end{enumerate}
Then, the following assertions hold:
\begin{enumerate}[(a)]
    \item the weak Leopoldt conjecture is true for $\rho^*$,
    \item the fine Selmer group $\Rfine{\mathbf{A}(\rho)}{\Q_{\op{cyc}}}$ associated to $\rho$ is cotorsion over $\Lambda$ with $\mufn{\rho}=0$.
\end{enumerate}
\end{theorem}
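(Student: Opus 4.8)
The plan is to obtain Theorem~\ref{th 5.5} as a direct application of Theorem~\ref{th h2 vanishing} with $F=\Q$: it suffices to verify that the dihedral data $(\bar\rho,K,\psi)$ satisfies the three hypotheses \eqref{c3 of th h2}, \eqref{c1 of th h2} and \eqref{c2 of th h2} of that theorem, after which the two conclusions (a) and (b) come out verbatim, since the set $S$ already contains $p$ and $\infty$. So the entire argument is a matter of matching hypotheses through the dihedral lemmas proved above.

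First I would record the bookkeeping identification $H_L' = H_{L,S}$: in the notation of Theorem~\ref{th h2 vanishing}, $H_L'$ is the maximal quotient of $\op{Cl}(L)\otimes\F_p$ in which the primes of $L$ above $S$ split completely in the corresponding subextension of the Hilbert class field of $L$, which is exactly the module $H_{L,S}=\op{Gal}(L_{1,S}/L)$ introduced in this section. With this in hand, hypothesis \eqref{c3 of th h2}, i.e.\ $H^1(G,\mathbf{V}_{\bar{\rho}})=0$ for $G=\op{Gal}(L/\Q)$, follows from Lemma~\ref{dihedral lemma 1}, the point being that $|G|=|\op{Im}(\bar\rho)|$ is prime to $p$ because $\psi$ is valued in $\F^\times$ and $p$ is odd. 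Hypothesis \eqref{c2 of th h2}, the absence of a trivial subrepresentation in each local restriction $\bar\rho_{|\ell}$ for $\ell\in S$, is literally hypothesis \eqref{5.5 cond 3} of the theorem; here Lemma~\ref{dihedral lemma 3} is not logically needed for the implication, but it is the tool one uses to certify \eqref{5.5 cond 3} in concrete families of $\psi$.

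The substantive step is hypothesis \eqref{c1 of th h2}, namely $\op{Hom}_G(H_L',\mathbf{V}_{\bar{\rho}})=0$. For this I would feed hypothesis \eqref{5.5 cond 1} (projective image of $\bar\rho$ a non-abelian dihedral group, so that $\bar\rho$ is an absolutely irreducible $2$-dimensional representation of $G$) together with hypothesis \eqref{5.5 cond 2} (which is precisely the equality $K_{1,S}\cdot L=L_{1,S}$ demanded in Lemma~\ref{ lemma 5.2}) into Lemma~\ref{dihedral lemma 2}, obtaining $\op{Hom}_G(H_{L,S},\mathbf{V}_{\bar{\rho}})=0=\op{Hom}_G(H_L',\mathbf{V}_{\bar{\rho}})$. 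The underlying mechanism, inside Lemma~\ref{ lemma 5.2}, is that once $K_{1,S}\cdot L=L_{1,S}$ the $G$-action on $H_{L,S}$ factors through the abelian quotient $\op{Gal}(K/\Q)$, so $H_{L,S}$ decomposes into $1$-dimensional pieces over $\F_p$ and admits no $G$-homomorphism into the irreducible $2$-dimensional module $\mathbf{V}_{\bar{\rho}}$.

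With the three hypotheses in place, Theorem~\ref{th h2 vanishing} yields $H^2(\Q_S/\Q,\mathbf{V}_{\bar{\rho}^*})=0$, hence the weak Leopoldt conjecture for $\rho^*$ and the $\Lambda$-cotorsionness of $\Rfine{\mathbf{A}(\rho)}{\Q_{\op{cyc}}}$ with $\mufn{\rho}=0$. I do not expect a genuine obstacle here: the statement is a repackaging of Theorem~\ref{th h2 vanishing} via the dihedral lemmas, and the only places requiring a little care — the identification $H_L'=H_{L,S}$ and the reduction from abstract irreducible $\F_p[G]$-modules to the $\F[G]$-module $\mathbf{V}_{\bar{\rho}}$ — are already dispatched inside Lemmas~\ref{ lemma 5.2} and \ref{dihedral lemma 2}.
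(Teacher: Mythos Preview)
Your proposal is correct and follows essentially the same route as the paper: verify the three hypotheses of Theorem~\ref{th h2 vanishing} using Lemma~\ref{dihedral lemma 1} for \eqref{c3 of th h2}, Lemma~\ref{dihedral lemma 2} (via hypothesis~\eqref{5.5 cond 2} and the dihedral assumption~\eqref{5.5 cond 1}) for \eqref{c1 of th h2}, and the assumed hypothesis~\eqref{5.5 cond 3} for \eqref{c2 of th h2}. The extra details you supply, namely the identification $H_L'=H_{L,S}$ and the remark that Lemma~\ref{dihedral lemma 3} is not needed for the implication itself, are accurate and harmless additions.
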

\begin{proof}
Note that condition~\eqref{c2 of th h2} of Theorem \ref{th h2 vanishing} is satisfied by assumption \eqref{5.5 cond 3} above.
It follows from Lemmas \ref{dihedral lemma 1} and \ref{dihedral lemma 2} that the remaining conditions of Theorem \ref{th h2 vanishing} are satisfied, and thus the result follows. 
\end{proof}

We reiterate that Lemma~\ref{dihedral lemma 3} gives sufficient conditions for condition~\eqref{5.5 cond 3} of Theorem~\ref{th 5.5} to hold.

Note that the above theorem involves conditions on the residual representation $\bar{\rho}$ and the set of primes $S$. Via purely Galois theoretic computations, we construct a class of examples of residual representations $\bar{\rho}:\op{G}_{\Q,S}\rightarrow \op{GL}_2(\F)$ satisfying the above conditions.

\par Note that the condition $K_{1,S}\cdot L=L_{1,S}$ is satisfied if $K_1\cdot L=L_1$. Examples satisfying this condition are constructed in \cite[section 5.2]{deo2020dihedral} (see \cite[Table 1]{deo2020dihedral}).
We will adapt the approach of \cite[Section 5.2]{deo2020dihedral} to our context to obtain a class of examples satisfying the conditions of Theorem~\ref{th 5.5}.
%For the sake of completeness, 
%We then verify the condition $K'_{1,S}\cdot L'=L'_{1,S}$ in some of the examples given in \cite[Table 1]{deo2020dihedral} to get some explicit examples satisfying the conditions of Theorem~\ref{th 5.5}.
We will now briefly outline our strategy to get examples.%adapt some of these calculations to our context and

\par Let $K$ be an imaginary quadratic extension of $\Q$ and $p$ be an odd prime. Assume that there is an odd prime $q\neq p$ such that $q$ divides the class number of $K$. 
Let $N$ be the maximal extension of $K$ contained in the Hilbert class field of $K$ with odd degree of extension $[N:K]$.
By class field theory, $N$ is a Galois extension of $K$ and $\op{Gal}(N/K)$ is identified with the odd part of $\op{Cl}(K)$ i.e. with $\op{Cl}(K)/\op{Cl}_2(K)$, where $\op{Cl}_2(K)$ is the $2$-Sylow subgroup of $\op{Cl}(K)$.
%By class field theory, the $q$-part of the Hilbert class field extension of $K$ is a Galois extension $L/K$ such that $\op{Gal}(L/K)$ is identified with $\op{Cl}(K)/q\op{Cl}(K)\simeq \Z/q\Z$.
Note that $\op{Cl}_2(K)$ is stable under the action of $\op{Gal}(K/\Q)$ on $\op{Cl}(K)$.
Therefore, using class field theory, we conclude that $N$ is Galois over $\Q$.

\par Let $\nu$ be a lift of the generator of $\op{Gal}(K/\Q)$ in $G_0=\op{Gal}(N/\Q)$ with order $2$.
Since $|\op{Gal}(N/K)|$ is odd, such a lift does exist. 
Let $H_0=\op{Gal}(N/K)$.
\begin{lemma}\label{cftlemma}
Under the notation and set-up established above, $\nu g \nu^{-1} = g^{-1}$ for all $g \in H_0$.
\end{lemma}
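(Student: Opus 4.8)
The plan is to reduce the statement to the classical fact that complex conjugation inverts the ideal class group of an imaginary quadratic field, combined with the $\op{Gal}(K/\Q)$-equivariance of the Artin reciprocity isomorphism.

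First I would observe that since $H_0 = \op{Gal}(N/K)$ is abelian and normal in $G_0 = \op{Gal}(N/\Q)$, the conjugation action $g \mapsto \tilde{\sigma} g \tilde{\sigma}^{-1}$ of a lift $\tilde{\sigma}\in G_0$ of the nontrivial element $\sigma$ of $\op{Gal}(K/\Q)$ on $H_0$ does not depend on the choice of $\tilde{\sigma}$; in particular it is computed by our chosen order-$2$ lift $\nu$, so it suffices to show that this well-defined automorphism of $H_0$ is inversion.

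Next I would invoke class field theory: letting $H_K$ be the Hilbert class field of $K$, the Artin map gives an isomorphism $\op{Gal}(H_K/K)\xrightarrow{\sim}\op{Cl}(K)$ which is equivariant for the action of $\op{Gal}(K/\Q)$ --- conjugation (via lifts) on the left, and the natural action on ideal classes on the right. Then for any fractional ideal $\mathfrak{a}$ of $\cO_K$ one has $\mathfrak{a}\cdot\sigma(\mathfrak{a}) = \mathfrak{a}\,\overline{\mathfrak{a}} = (N_{K/\Q}\mathfrak{a})$, which is principal, so $\sigma$ sends the class $[\mathfrak{a}]$ to $[\mathfrak{a}]^{-1}$; transporting back through the Artin isomorphism, $\sigma$ acts on $\op{Gal}(H_K/K)$ by $h\mapsto h^{-1}$. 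Finally, since $N$ is by construction the subfield of $H_K$ fixed by the $2$-Sylow subgroup $\op{Cl}_2(K)$ --- so that $H_0 = \op{Gal}(N/K)$ is a $\op{Gal}(K/\Q)$-stable quotient of $\op{Gal}(H_K/K)$ --- and an automorphism acting by inversion on a group acts by inversion on every quotient, I conclude that $\nu g \nu^{-1} = g^{-1}$ for all $g\in H_0$.

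The argument is essentially formal once the two standard inputs (inversion of the class group by complex conjugation, and functoriality of the reciprocity map) are in place; the only step that requires a little care is the $\op{Gal}(K/\Q)$-equivariance of the Artin map, but this is a routine property of global class field theory, so I do not anticipate a genuine obstacle.
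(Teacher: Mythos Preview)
Your proof is correct, but it takes a genuinely different route from the paper's.  The paper argues purely group-theoretically: since $H_0$ is abelian of odd order, the involution $g\mapsto \nu g\nu^{-1}$ yields a direct product decomposition $H_0=H_1H_2$ into the subgroup $H_1$ on which $\nu$ acts trivially and the subgroup $H_2$ on which it acts by inversion (the ``squaring trick'' for odd-order abelian groups); then if $|H_1|>1$, the fixed field of $H_2$ would produce a nontrivial unramified abelian extension of $\Q$, contradicting Minkowski.  You instead compute the action directly via class field theory, using that $\mathfrak{a}\cdot\sigma(\mathfrak{a})=(N_{K/\Q}\mathfrak{a})$ is principal together with the $\op{Gal}(K/\Q)$-equivariance of the Artin isomorphism.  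Your approach is shorter and more conceptual once the standard CFT inputs are granted; the paper's argument has the modest advantage that it uses only the elementary fact that $\Q$ has no unramified abelian extensions and avoids appealing to the functoriality of the reciprocity map.
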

\begin{proof}
As $H_0$ is normal in $G_0$, $\nu H_0 \nu^{-1} = H_0$.
Let $H_1 = \{h \in H_0 \mid \nu h \nu^{-1} = h\}$ and $H_2 = \{h \in H_0 \mid \nu h \nu^{-1} = h^{-1}\}$.
So $H_1$ and $H_2$ are subgroups of $H_0$ and moreover, they are normal in $G_0$. As $H_0$ has odd order, we have $H_1 \cap H_2 = \{1\}$.
We now claim that $H_0=H_1H_2$.

Let $g \in H_0$ and suppose $\nu g \nu^{-1}=g' \in H_0$.
Let $g_1=gg'$ and $g_2=g(g')^{-1}$. So $g^2=g_1g_2$.
As $\nu^2=1$, we get that $\nu g_1 \nu^{-1} = g_1$ and $\nu g_2 \nu^{-1} = g_2^{-1}$.
So $g_1 \in H_1$ and $g_2 \in H_2$.
Thus $g^2 \in H_1H_2$ for all $g \in H_0$.
As $H_0$ is abelian and has odd order, every element of $H_0$ is a square and hence, $H_0 = H_1H_2$.

Suppose $|H_1|=k >1$. Note that $G' := G_0/H_2$ is an abelian group of order $2|H_1|=2k$. 
So if $N' \subset N_0$ is the fixed field of $H_2$, then $N'$ is an abelian extension of $\Q$ with $\op{Gal}(N'/\Q)=G'$. 
Note that $N'$ is an unramified extension of $K$.
Therefore, a prime $\ell$ of $\Q$ is ramified in $N'$ if and only if it is ramified in $K$.
As $K$ is quadratic over $\Q$, the order of the inertia group at any rational prime $\ell$ in $\op{Gal}(N'/\Q)$ is at most $2$.

As $k$ is odd and $G'$ is abelian, $G'$ contains a unique subgroup $H'$ of order $2$.
Thus the subfield of $N'$ fixed by $H'$ is an unramified abelian extension of $\Q$ of degree $k >1$. This gives us a contradiction.
Hence, $|H_1|=1$ which means $H_0=H_2$ proving the lemma.
\end{proof}

%Then the subgroup $(\nu-1).H_0$ of $H_0$, generated by the set $\{\nu g \nu^{-1}g^{-1} \mid g \in H_0\}$, is a normal subgroup of $G_0$.
%Therefore, if $N_0$ is the fixed field of $(\nu-1).H_0$, then $N_0$ is an abelian extension of $\Q$ and $[N_0 : \Q] = 2k$ with $k$ odd.
%As $N_0$ is unramified over $K$ and $N_0$ is abelian over $\Q$, it follows that $N_0$ contains an unramified extension of $\Q$ with degree of extension $k$ over $\Q$. 
%Hence, it follows that $k=1$ and $N_0=K$.
%This means $H_0=(\nu-1)H_0$ which implies that if $g \in H_0$, then $\nu g \nu^{-1}=g^{-1}$.

\par Let $L'$ be a cyclic, unramified extension of $K$ with $[L':K]=n>1$, $n$ odd (i.e. $L' \subset N$) and $p \nmid n$.
Note that such an extension exists as we are assuming that the class number of $K$ is divisible by an odd prime $q \neq p$.
From Lemma~\ref{cftlemma}, we conclude that $L'$ is Galois over $\Q$, with $\op{Gal}(L'/\Q) = D_{n}$, the dihedral group with $2n$ elements.
As $n$ is odd, $\op{Gal}(L'/\Q)$ is non-abelian.
%we find that $\nu(N)$ is an abelian, unramified extension of $K$ of with odd degree of extension $[\nu(N):K]$ and thus must equal $N$. Hence, $N$ is a Galois extension of $\Q$.

%\par Fix a non-trivial character $\psi: \op{Gal}(L/K)\rightarrow \F^\times$ and consider the induced representation $\bar{\rho}:=\op{Ind}_{\op{G}_K}^{\op{G}_{\Q}} (\psi):\op{G}_{\Q,S}\rightarrow \op{GL}_2(\F)$. Here, $S$ is a finite set of primes of $\Q$ containing the prime $p$, $\infty$ and the set of primes which ramify in $K$. 
%Observe that the Galois group $\op{Gal}(L/\Q)$ is not $\Z/2q\Z$, hence, is isomorphic to $D_q$, the dihedral group with $2q$ elements. 
%Then, two cases are considered in \emph{loc. cit.}, in the first case, $K_1\cdot L=L_1$ and in the second case, $K_1\cdot L\neq L_1$.
\begin{theorem}\label{th 5.6}

 Let $\F$ be a field of characteristic $p$. Let $K$ be an imaginary quadratic field with class number divisible by an odd prime different from $p$.
 %$pq$ and discriminant $\Delta$. 
 Let $L'$ be a cyclic extension of $K$ contained in the Hilbert class field of $K$ with $[L':K]=n>1$, $n$ odd and $p \nmid n$.
 Let $\psi:\op{Gal}(L'/K)\rightarrow \F^\times $ and $\eta : G_{\Q} \to \FF^{\times}$ be non-trivial characters. 
 Set $\bar{\rho}$ to denote the induced representation $\op{Ind}_{\op{G}_K}^{\op{G}_{\Q}}(\psi\eta)$ of $G_{\Q}$.
 Let $L$ be the extension of $\Q$ cut out by $\bar\rho$.
 Let $S$ be a set of prime numbers containing $p$, $\infty$ and the primes which are ramified in $L$.
 Let $\rho:\op{G}_{\Q,S}\rightarrow \op{GL}_2(\cO)$ be a lift of $\bar{\rho}$.
 %which is ramified at all primes $\ell\neq p$. 
 Assume that the following conditions are satisfied:
 
 \begin{enumerate}
     \item\label{5.6 cond 1} If $\ell \in S$, $\ell$ is split in $K$ and $w$ is a fixed place of $K$ lying above $\ell$, then $|\psi(G_w)| \neq |\eta(G_{\ell})|$,
     \item\label{5.6 cond 2} If $\ell$ is either inert or ramified in $K$ and $w$ is the unique place of $K$ lying above $\ell$, then $\eta_{|\ell}$ is non-trivial and is not the quadratic character of $G_{\ell}$ corresponding to $K_w$,
     %\item\label{5.6 cond 3} If $\ell\in S$ and $\ell$ is inert in $K$, then $\ell \not\equiv \pm 1 \pmod{p}$,
     %\item\label{5.6 cond 4} If $\ell\in S$ and $\ell$ is split in $K$, then $\ell^n \not\equiv  1 \pmod{p}$,
     \item\label{5.6 cond 5} $L_{1,S}=L \cdot K_{1,S}$.
 \end{enumerate}
 
Then, the following assertions hold:
\begin{enumerate}[(a)]
    \item the weak Leopoldt conjecture is true for $\rho^*$,
    \item the fine Selmer group $\Rfine{\mathbf{A}(\rho)}{\Q_{\op{cyc}}}$ associated to $\rho$ is cotorsion over $\Lambda$ with $\mufn{\rho}=0$.
\end{enumerate}

\end{theorem}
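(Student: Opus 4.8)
The plan is to obtain this theorem as a direct application of Theorem~\ref{th 5.5}: the whole proof reduces to verifying the three hypotheses of that theorem for $\bar\rho=\op{Ind}_{\op{G}_K}^{\op{G}_\Q}(\psi\eta)$ and the set $S$. It will be convenient to write $\chi:=\psi\cdot(\eta|_{\op{G}_K})$ for the inducing character of $\op{G}_{K,S'}$, so that $\bar\rho=\op{Ind}_{\op{G}_K}^{\op{G}_\Q}(\chi)$; since $\bar\rho$ is unramified outside $S$, the character $\chi$ is unramified outside $S'$, so that part of hypothesis~\eqref{5.5 cond 1} is automatic. Hypothesis~\eqref{5.5 cond 2} of Theorem~\ref{th 5.5}, namely $K_{1,S}\cdot L=L_{1,S}$, is literally hypothesis~\eqref{5.6 cond 5}, so nothing is needed there.

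For the dihedral part of hypothesis~\eqref{5.5 cond 1}, I would first invoke Lemma~\ref{cftlemma}: since $L'\subseteq N$ and $\op{Gal}(L'/K)$ is a $\op{Gal}(K/\Q)$-stable quotient of $\op{Gal}(N/K)$, the non-trivial element of $\op{Gal}(K/\Q)$ acts on $\op{Gal}(L'/K)$ by inversion, so $\psi^\nu=\psi^{-1}$. As $\eta$ is a character of $\op{G}_\Q$ we also have $(\eta|_{\op{G}_K})^\nu=\eta|_{\op{G}_K}$; hence $\chi^\nu=\psi^{-1}\cdot(\eta|_{\op{G}_K})$ and $\chi\cdot(\chi^\nu)^{-1}=\psi^2$. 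Since $\op{Gal}(L'/K)$ is cyclic of odd order and $\psi$ is non-trivial, both $\psi$ and $\psi^2$ have odd order $m\ge 3$. In particular $\chi\neq\chi^\nu$, so by the lemma preceding Lemma~\ref{dihedral lemma 3} (applied with $\chi$ in the role of its character ``$\psi$'') the representation $\bar\rho$ is absolutely irreducible with dihedral projective image; and since $\bar\rho|_{\op{G}_K}\cong\chi\oplus\chi^\nu$, the cyclic part of that projective image is generated by the image of $\chi(\chi^\nu)^{-1}=\psi^2$, hence has order $m\ge 3$, so the projective image is the non-abelian dihedral group $D_m$. This gives hypothesis~\eqref{5.5 cond 1}.

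The remaining hypothesis~\eqref{5.5 cond 3}, that $\bar\rho_{|\ell}$ has no trivial subrepresentation for any $\ell\in S$, is where conditions \eqref{5.6 cond 1} and \eqref{5.6 cond 2} are used, and I would prove it by contradiction via Lemma~\ref{dihedral lemma 3} (again with $\chi$ as its character). Suppose some $\ell\in S$ violates it; the lemma gives two cases. If $\ell$ splits in $K$ and $w\mid\ell$, then $\chi|_{\op{G}_w}=1$ or $\chi^\nu|_{\op{G}_w}=1$; since $K_w=\Q_\ell$ we have $\op{G}_w=\op{G}_\ell$ and $\eta|_{\op{G}_w}=\eta|_{\op{G}_\ell}$, so in either case $\psi|_{\op{G}_w}=(\eta|_{\op{G}_\ell})^{\pm1}$, forcing $|\psi(\op{G}_w)|=|\eta(\op{G}_\ell)|$ and contradicting \eqref{5.6 cond 1}. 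If instead $\ell$ is inert or ramified in $K$, then the unique prime $w\mid\ell$ of $K$ splits completely in $L$, so $\bar\rho|_{\op{G}_w}$ is trivial; restricting the isomorphism $\bar\rho|_{\op{G}_K}\cong\chi\oplus\chi^\nu$ to $\op{G}_w\subseteq\op{G}_K$ forces $\chi|_{\op{G}_w}=\chi^\nu|_{\op{G}_w}=1$, and multiplying and dividing these relations gives $(\psi|_{\op{G}_w})^2=1$ and $(\eta|_{\op{G}_w})^2=1$; since $\psi$ has odd order this yields $\psi|_{\op{G}_w}=1$ and then $\eta|_{\op{G}_w}=1$, so $\eta|_{\op{G}_\ell}$, being trivial on the index-two subgroup $\op{G}_w$, is either trivial or the quadratic character of $\op{G}_\ell$ cutting out $K_w$---contradicting \eqref{5.6 cond 2}. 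This establishes \eqref{5.5 cond 3}, and Theorem~\ref{th 5.5} then delivers both assertions.

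The only genuine difficulty I anticipate is bookkeeping: keeping the $\nu$-action, the choice of prime $w$ above $\ell$, and the inclusions $\op{G}_w\subseteq\op{G}_\ell\subseteq\op{G}_\Q$ consistent throughout, and in particular checking carefully that ``$\eta|_{\op{G}_w}$ trivial'' pins $\eta|_{\op{G}_\ell}$ down to the trivial character or to the quadratic character attached to $K_w$. Conceptually, the content of the theorem is precisely that conditions \eqref{5.6 cond 1} and \eqref{5.6 cond 2} are exactly what is needed to close off, respectively, the split and the non-split branches of the dichotomy in Lemma~\ref{dihedral lemma 3}.
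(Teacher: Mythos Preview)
Your proof is correct and follows essentially the same approach as the paper: both reduce to Theorem~\ref{th 5.5}, invoke Lemma~\ref{cftlemma} (directly or implicitly) to obtain $\psi^\nu=\psi^{-1}$ and hence the dihedral projective image, and then rule out the two alternatives of Lemma~\ref{dihedral lemma 3} using conditions \eqref{5.6 cond 1} and \eqref{5.6 cond 2} respectively. The only minor variation is in the inert/ramified case, where the paper first observes that the image of $G_\ell$ in $\op{Gal}(L'/\Q)=D_n$ is abelian of order~$2$ (so $w$ splits completely in $L'$) and then uses \eqref{5.6 cond 2} to get $\eta_{|w}\neq 1$, whereas you reach the same contradiction by deducing $\psi_{|G_w}=1$ from the odd order of $\psi$ and then $\eta_{|G_w}=1$ directly from $\chi_{|G_w}=1$; the two arguments are equivalent.
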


%\begin{longtable}{l|l||p{6.5cm}|p{6.5cm}}
%$p$ & $q$ & imaginary quadratic fields $K$ & discriminants $\Delta$ \\
%\hline
%\hline
%$5$ & $3$ & imaginary quadratic fields of class number $15$ & 
%\begin{minipage}{5cm} $\Delta\neq -4219$, $-19867$ \\ \end{minipage}\\
%\hline
%$3$ & $5$ & all imaginary quadratic fields of class number $15$ of abs.\ value of discriminants $\le 19387$ & all discriminants \\
%\hline
%$7$ & $3$ & all imaginary quadratic fields of class number $21$ of abs.\ value of discriminants $\le 14419$ &
%\begin{minipage}{5cm} $\Delta\neq -8059$ \end{minipage}\\
%\hline
%$3$ & $7$ & all imaginary quadratic fields of class number $21$ of abs.\ value of discriminants $\le 5867$ & all discriminants \\
%\hline
%$11$ & $3$ & all imaginary quadratic fields of class number $33$ of abs.\ value of discriminants $\le 28163$ & all discriminants \\
%\hline
%$3$ & $11$ & all imaginary quadratic fields of class number $33$ of abs.\ value of discriminants $\le 1583$ & all discriminants \\
%\hline
%$7$ & $5$ & all imaginary quadratic fields of class number $35$ of abs.\ value of discriminants $\le 16451$ & all discriminants \\
%\hline
%$5$ & $7$ & all imaginary quadratic fields of class number $35$ of abs.\ value of discriminants $\le 4931$ & all discriminants \\
%\end{longtable}

\begin{proof}
%It follows from \cite{deo2020dihedral} that for triples arising from the values in the above table, the condition $K_1\cdot L=L_1$ is satisfied, and hence, so is the condition $K_1^S\cdot L=L_1^S$. The result then follows from Theorem \ref{th 5.5}.
Note that, under the hypotheses of the theorem above, conditions~\eqref{5.5 cond 1} and \eqref{5.5 cond 2} of Theorem~\ref{th 5.5} hold. 
Therefore, by Theorem~\ref{th 5.5}, it suffices to check that \emph{no} condition of Lemma~\ref{dihedral lemma 3} is satisfied.
Let $\ell \in S$ and fix a place $w$ of $K$ lying above $\ell$.

Suppose $\ell$ is split in $K$. Now condition \eqref{5.6 cond 1} implies that $\psi\eta_{|w} \neq 1$.
%Then the condition \eqref{5.6 cond 1} implies that $\ell$ does not split completely in $L'$.
 As $\op{Gal}(L'/\Q)$ is a non-abelian dihedral group, it follows that $\psi^{\nu} = \psi^{-1}$.
As $\eta$ is a character of $G_{\Q}$, we get that $\eta^{\nu} = \eta$.
Therefore, condition \eqref{5.6 cond 1} implies that $(\psi\eta)^{\nu}_{|w} = \psi^{-1}\eta_{|w} \neq 1 $.
%As the order of $\psi$ is odd, this implies that $\psi_{|w}=\psi^{-1}_{|w} = 1$.
%So $\ell$ splits completely in $L'$. As $\psi\eta_{|w} =1$, it follows that $\eta_{|\ell} = 1$.
%However, the condition \eqref{5.6 cond 1} gives us a contradiction.
%Similar logic would also give us contradiction if we assume $\psi^{\nu}\eta_{|w} =1$.
%So, we conclude that if $\ell$ is split in $K$, then $\psi\eta_{|w} \neq 1$ and $\psi^{\nu}\eta_{|w} \neq 1$.
Thus condition \eqref{5.4 cond 1} of Lemma~\ref{dihedral lemma 3} is not satisfied.

Now suppose $\ell$ is either inert or ramified in $K$. As $L'$ is an unramified extension of $K$ and $K_w$ is a quadratic extension of $\Q_{\ell}$, it follows that the image of $G_{\ell}$ in $\op{Gal}(L'/\Q)$ is abelian.
As $\op{Gal}(L'/\Q) = D_{n}$ with $n$ odd, it follows that the image of $G_{\ell}$ in $\op{Gal}(L'/\Q)$ is an abelian group of order $2$.
Hence, $w$ is totally split in $L'/K$.
Now as $\eta_{|\ell}$ is not the quadratic character corresponding to $K_w$ and $\eta_{|\ell} \neq 1$, it follows that $\eta_{|w} \neq 1$ and hence, $w$ is not completely split in $L/K$.
Therefore, condition \eqref{5.4 cond 2} of Lemma~\ref{dihedral lemma 3} is not satisfied. This finishes the proof of the theorem.
%Note that the conditions \eqref{5.6 cond 1}, \eqref{5.6 cond 2} and \eqref{5.6 cond 3} of the theorem immediately imply that $\ell$ does not satisfy conditions \eqref{5.4 cond 2}, \eqref{5.4 cond 3} and \eqref{5.4 cond 5} of Lemma~\ref{dihedral lemma 3}.
%If $\ell$ is split in $K$, then the orders of both $\psi|_{\ell}$ and $\psi^{\nu}|_{\ell}$ divide $n$.
%Since we are assuming that $\ell^n \not\equiv 1 \pmod{p}$, we conclude that $\psi|_{\ell} \neq \bar\chi|_{\ell}$ and $\psi^{\nu}|_{\ell} \neq \bar\chi|_{\ell}$. This means $\ell$ does not satisfy condition \eqref{5.4 cond 1} of Lemma~\ref{dihedral lemma 3}.
%As $L$ is unramified over $K$ and $K$ is a quadratic extension of $\Q$, it follows that $\bar\rho(G_{\ell})$ is an abelian group for all primes $\ell$.
%As $n=|\op{Gal}(L/K)|$ is odd, we know that if an abelian subgroup of $\op{Gal}(L/\Q)$ is not a subgroup of $\op{Gal}(L/K)$, then it has order $2$. Therefore, it implies that if a prime $\ell$ is either inert or ramified in $K$, then the unique prime of $K$ lying above $\ell$ splits completely in $L$.
%Hence, it follows that $\ell$ does not satisfy condition \eqref{5.4 cond 4} of Lemma~\ref{dihedral lemma 3} as well which proves the theorem.
\end{proof}

{\bf Examples:} Let $p=3$ and $K= \Q(\sqrt{-D})$ with $$D \in \{239, 971, 1259, 2243, 2699, 2843\}.$$ We verify, using LMFDB \cite{cremona2021functions}, that the class number of $K$ is $15$ (see \href{https://www.lmfdb.org/NumberField/?hst=List&degree=2&class_number=15&ram_quantifier=exactly&search_type=List}{Imaginary quadratic number fields with class number 15}).
Let $L'$ be the unramified, abelian extension of $K$ of degree $5$.
Let $\eta$ be the mod $3$ cyclotomic character of $\bar\chi_3$ of $G_{\Q}$.
%Note that $D$ is a prime, $D \equiv 3 \pmod{4}$ and $D \equiv -1 \pmod{3}$. 
%Hence, conditions \eqref{5.6 cond 1} and \eqref{5.6 cond 2} of Theorem~\ref{th 5.6} are satisfied.
So $L = L'(\mu_3)$.
We verify, from LMFDB \cite{cremona2021functions}, that $L_1 = L \cdot K_1$ which means condition~\eqref{5.6 cond 5} of Theorem~\ref{th 5.6} is satisfied for all finite set $S$ of primes of $\Q$.
Let $$\mathcal{S}_1 = \{\ell \text{ prime } \mid \ell \equiv 1 \pmod{3} \text{ and } \ell \text{ is split in } K \text{ but not totally split in } L'\}$$ and
$$\mathcal{S}_2 = \{\ell \text{ prime } \mid \ell \equiv -1 \pmod{3} \text{ and } \ell \text{ is not inert in } K\}.$$ 
Let $\mathcal{S} = \mathcal{S}_1 \cup \mathcal{S}_2$.
Note that, by Chebotarev density theorem, $\mathcal{S}$ is an infinite set with Dirichlet density $9/20$.
Let $S'$ be a finite subset of $\mathcal{S}$ and let $S = S' \cup \{3,D,\infty\}$.
Then $S$ satisfies conditions \eqref{5.6 cond 1} and \eqref{5.6 cond 2} of Theorem~\ref{th 5.6}.
Let $\F$ be the finite field with $81$ elements and let $\psi:\op{Gal}(L/K)\rightarrow \F^\times $ be a non-trivial character. 
 Let $\bar{\rho} : G_{\Q,S} \to \op{GL}_2(\F)$ be the induced representation $\op{Ind}_{\op{G}_K}^{\op{G}_{\Q}}(\psi\bar\chi_3)$ of $G_{\Q,S}$.
 If $\rho : G_{\Q,S} \to \op{GL}_2(\cO)$ is a lift of $\bar\rho$, then by Theorem~\ref{th 5.6}, we conclude that
\begin{enumerate}[(a)]
    \item the weak Leopoldt conjecture is true for $\rho^*$,
    \item the fine Selmer group $\mathcal{R}_{3^{\infty}}(\mathbf{A}(\rho)/{\Q_{\op{cyc}}})$ associated to $\rho$ is cotorsion over $\Lambda$ with $\mufn{\rho}=0$.
\end{enumerate}
Moreover, observe that $\bar\rho$ is self-dual i.e. $\bar\rho^{\vee}=\bar\rho$. 
Hence, we conclude that the weak Leopoldt conjecture is true for $\rho(1)$.

%\begin{remark}
%If $\bar\rho$ is a representation of the type considered in Theorem~\ref{th 5.6}, then $\bar\rho$ is ramified at a rational prime $\ell$ if and only if $\ell$ is ramified in $K$.
%Denote the set of rational primes ramified in $K$ by $S_0$.
%Then a set of primes $S$ satisfying the first condition of Theorem~\ref{th 5.6} exists if and only if $p \nmid \ell-1$ for all primes $\ell \in S_0$.
%\end{remark}
\bibliographystyle{alpha}
\bibliography{references}
\end{document}